\documentclass[11pt,a4paper]{article}

\usepackage[left=27mm,right=27mm,top=26mm,bottom=28mm]{geometry}
\usepackage{amsmath,amssymb,amsthm,mathtools}
\usepackage{microtype}
\usepackage{enumitem}
\usepackage{hyperref}
\usepackage[nameinlink,capitalize,noabbrev]{cleveref}
\usepackage{lmodern}
\usepackage{titlesec}

\hypersetup{
  colorlinks=true,
  linkcolor=red,
  citecolor=blue,
  urlcolor=blue
}

\setlist[enumerate]{leftmargin=2.35em,itemsep=2pt,topsep=4pt}
\setlist[itemize]{leftmargin=2.1em,itemsep=2pt,topsep=4pt}
\titleformat{\section}{\normalfont\Large\bfseries}{\thesection}{1em}{}
\titlespacing*{\section}{0pt}{2.3ex plus .6ex minus .2ex}{1.1ex}
\allowdisplaybreaks

\newtheorem{theorem}{Theorem}[section]
\newtheorem{lemma}[theorem]{Lemma}
\newtheorem{proposition}[theorem]{Proposition}
\newtheorem{corollary}[theorem]{Corollary}
\newtheorem{conjecture}[theorem]{Conjecture}
\newtheorem{question}[theorem]{Question}
\theoremstyle{definition}
\newtheorem{definition}[theorem]{Definition}
\numberwithin{equation}{section}

\newcommand{\C}{\mathcal C}
\newcommand{\T}{\mathcal T}
\newcommand{\Palt}{\mathcal P}
\newcommand{\Qcal}{\mathcal Q}
\newcommand{\N}{\mathbb N}
\newcommand{\ext}{\mathrm{ext}}
\newcommand{\rev}{\operatorname{rev}}
\newcommand{\Sym}{\mathrm S}
\newcommand{\Sp}{(\spadesuit^{*})}
\newcommand{\Piuk}{\Pi_{\mathrm u}^{(k)}}
\newcommand{\co}{\mathrm{co}}

\title{Coordinate-extension degrees and layered $k$-uniform hypergraphs}
\author{\large Jiabao Yang\thanks{Email: \texttt{jbyang1215@nju.edu.cn}}\\[-1mm]
\normalsize School of Mathematics, Nanjing University, Nanjing 210093, P.R. China}
\date{}

\begin{document}
\maketitle

\begin{abstract}
Let $\Palt=(\C,\T)$ be a $k$-palette. For $0\le t\le k-1$, its $t$th coordinate-extension degree is the minimum, over every choice of $t$ coordinates and every assignment of colors to them, of the proportion of assignments to the remaining $k-t$ coordinates that complete the fixed colors to an admissible $k$-tuple. For a $k$-graph $F$, we define $\pi_t^{\ext}(F)$ as the supremum of this degree over all palettes not admitted by $F$. We prove that
\[
\pi_t^{\ext}(F)=0
\quad\text{if and only if}\quad
F\text{ is }t\text{-layered}.
\]
We also relate $t$-layeredness to vanishing orders, min-layeredness, max-layeredness, and layeredness. 
These results recover and extend
previous characterizations of Reiher, R\"odl, and Schacht and of
Lamaison, and answer a question of Lamaison for $3$-graphs.

At $t=0$, the parameter $\pi_0^{\ext}(F)$ is the $(k-2)$-uniform Tur\'an density $\pi_{k-2}(F)$. For every $k\ge3$ and $r\ge2$, we construct a finite $k$-graph $F_{k,r}$ with
\(
\pi_{k-2}(F_{k,r})=2(r-1)/rk^k.
\)
Thus $2/k^k$ is an accumulation point for single forbidden $k$-graphs. 
We also show that the least density of a $k$-graph that fails condition $\Sp$ of Lin, Wang and Zhou is $4/(3k^k)$. 
Finally, for every admissible matching of size $m$, we construct a $k$-graph that satisfies $\Sp$ for every coordinate pair, has no vanishing order, and has density $2^m/k^k$. 
This disproves a conjecture of Lin, Wang and Zhou for every $k\ge3$. 
\end{abstract}

\noindent\textbf{Keywords.} Palette; coordinate-extension degree; uniform Tur\'an density; layered hypergraph

\noindent\textbf{2020 Mathematics Subject Classification.} 05C65, 05C35.

\section{Introduction}

For a positive integer $k$ and a set $V$, let $\binom{V}{k}$ be the family of all $k$-subsets of $V$. 
A $k$-uniform hypergraph, or a $k$-graph, is a pair $H=(V,E)$ with $E\subseteq\binom{V}{k}$. 
A $k$-graph $H$ is \emph{$F$-free} if it contains no copy of the $k$-graph $F$ as a subgraph. 
The \emph{Tur\'an number} of $F$, denoted by $\operatorname{ex}(n,F)$, is the maximum number of edges in an $F$-free $k$-graph on $n$ vertices. 
Tur\'an-type problems have been a central theme in extremal combinatorics for many decades. 
Since determining $\operatorname{ex}(n,F)$ exactly is often difficult, it is natural to study its asymptotic behaviour. 
The \emph{Tur\'an density} of $F$ is defined by $ \pi(F)=\lim_{n\to\infty} \operatorname{ex}(n,F)/\binom{n}{k}$,
where the existence of the limit follows from a standard averaging argument. 

For graphs, the asymptotic problem is completely understood. 
The classical Erd\H{o}s--Stone--Simonovits theorem~\cite{Er,stone} states that every graph $F$ containing at least one edge satisfies $ \pi(F)=\frac{\chi(F)-2}{\chi(F)-1}$.
In sharp contrast, determining Tur\'an densities of $k$-graphs with $k\ge 3$ is difficult even for very small hypergraphs. 
Let $K_4^{(3)}$ denote the complete $3$-graph on four vertices, and let $K_4^{(3)-}$ be the $3$-graph obtained from $K_4^{(3)}$ by deleting one edge. 
The exact values of both $\pi(K_4^{(3)-})$ and $\pi(K_4^{(3)})$ remain unknown. 
In particular, determining $\pi(K_4^{(3)})$, usually referred to as the \emph{tetrahedron problem}, is one of the main open problems in extremal combinatorics. 
For further background and related results, we refer the reader to~\cite{deCaen1994,Furedi1991,Keevash2011,Sidorenko1995}. 

These difficulties also motivate the study of variants in which the edges of the host hypergraph are required to be distributed uniformly.
Erd\H{o}s and S\'os \cite{ErdosSos} initiated the study of Tur\'an problems under a uniform density condition on large vertex subsets.
Given a $(k-2)$-graph $G^{(k-2)}$, let $K_k(G^{(k-2)})$ be the family of $k$-subsets of $V(G^{(k-2)})$ that span a complete $(k-2)$-graph. 
An $n$-vertex $k$-graph $H=(V,E)$ is $(d,\mu,k-2)$-dense if
$|K_k(G^{(k-2)})\cap E|\ge d|K_k(G^{(k-2)})|-\mu n^k$ for every $(k-2)$-graph $G^{(k-2)}$ with vertex set $V$. 
The $(k-2)$-uniform Tur\'an density of a $k$-graph $F$ is
\begin{align*}
 \pi_{k-2}(F)=\sup\{d\in[0,1]:{}&\text{ for every }\mu>0\text{ and }n_0\in\mathbb N,\text{ there exists an }F\text{-free}\\
 &\text{$(d,\mu,k-2)$-dense $k$-graph $H$ with $|V(H)|\ge n_0$}\}.
\end{align*}
When $k=3$, the edge set of a $1$-graph on $V$ consists of the singletons of some subset $U\subseteq V$, and $K_3(G^{(1)})=\binom U3$. 
Hence $\pi_{k-2}=\pi_1$ is exactly the original uniform Tur\'an density of $3$-graphs. 
For $k\ge4$, the parameter considered here is stronger than the usual $1$-uniform Tur\'an density. 
The latter is a different problem: Lamaison~\cite{LamaisonBeyond} recently proved that, in every uniformity $k\ge3$, the values $1/4$ and $\binom{k}{2}^{-\binom{k}{2}}$ occur as ordinary uniform Tur\'an densities.

The first general exact results were obtained by Reiher, R\"odl, and Schacht~\cite{RRSMantel} in 2016. 
They proved that $\pi_{k-2}(F^{(k)})=2^{1-k}$ for the unique three-edge $k$-graph $F^{(k)}$ on $k+1$ vertices. 
Reiher, R\"odl, and Schacht~\cite{ReiherRodlSchacht2018JLMS} also characterized all $k$-graphs $F$ with $\pi_{k-2}(F)=0$ and deduced that
$\pi_{k-2}(F)=0$ or $\pi_{k-2}(F)\ge k^{-k}$ for every $k$-graph $F$. 
This yields a jump phenomenon at zero.
Garbe, Kr\'al', and Lamaison~\cite{GKLminimum} showed that the lower endpoint $1/27$ is attained when $k=3$. 
Lin, Wang, and Zhou~\cite{LinWangZhou2025} then developed a framework based on hypergraph regularity for general $k$ and proved that the minimum positive value $k^{-k}$ is attained for every $k\ge3$.

The case $k=3$ has a much richer list of exact results. 
Glebov, Kr\'al', and Volec~\cite{GKV}, and independently Reiher, R\"odl, and Schacht~\cite{RRSweak}, proved that the $K_4^{(3)-}$ has uniform Tur\'an density $1/4$. 
Recently, Kielak, Kr\'al', Lamaison, Liu, Shu, and Wu \cite{Kielak2026} showed that the uniform Tur\'an density of $K_4^{(3)}$ is equal to $1/2$; this confirms that R\"{o}dl's \cite{Rodl1986} lower bound construction from 1986 is optimal.
Buci\'c, Cooper, Kr\'al', Mohr, and Munh\'a Correia~\cite{BucicEtAl} determined the density of every tight $3$-uniform cycle of length at least five: the answer is zero when the length is divisible by three and $4/27$ otherwise. 
Garbe, Il'kovi\v{c}, Kr\'al', Ku\v{c}er\'ak, and Lamaison~\cite{GarbeEtAl827} established the value $8/27$. 
Further exact $3$-graph classes and the star problem have also been studied; see, for example,~\cite{ChenSchulke,LamaisonWu,ReiherSurvey}.

Palettes, which extend a construction of R\"odl~\cite{Rodl1986}, provide all known sharp lower bounds in this setting. 
Lamaison~\cite{Lamaison2024} proved that palettes determine the uniform Tur\'an density of every single $3$-graph and obtained the values $(r-1)/(2r)$ for all $r\ge2$. 
Kr\'al', Ku\v{c}er\'ak, Lamaison and Tardos~\cite{KralKucerakLamaisonTardos2025} established a multi-palette classification theorem and obtained $4/81$. 
King, Piga, Sales and Sch\"ulke~\cite{KingPigaSalesSchulke2025} proved that every palette Lagrangian is the density of a finite forbidden family. 
Lin, Sun, Wang and Zhou~\cite{LinSunWangZhou2026} extended the palette equality and classification to $k$-graphs and arbitrary forbidden families. 
In particular, for every $k\ge3$ and $r\ge2$, they realized
\[
\frac{r-1}{r},\quad \frac{(r-1)^2}{r^2},\quad \frac{r-1}{2r},\quad
\frac{(k-1)^k}{k^k},\quad \frac{4(k-2)^{k-2}}{k^k},\quad
\frac{4(k-2)^{k-2}}{3k^k}
\]
as densities of single $k$-graphs and constructed the first non-principal families for this density. 
Recently, Ai, Chen, Chen, Yan and
Zhao~\cite{AiChenChenYanZhao2026} constructed a sequence of single
finite $3$-graphs whose uniform Tur\'an densities converge to $1/3$,
and proved that the exact values $1/3-1/(3m^2)$, $m\ge2$, are attained
by finite families of $3$-graphs. Li, Liu and
Pikhurko~\cite{LiLiuPikhurko2026} proved that the set of uniform
Tur\'an densities of possibly infinite families of $3$-graphs
contains a terminal interval.

\subsection{Palettes, coordinate-extension degree, and \texorpdfstring{$t$}{t}-layered graphs}\label{subsec:coordinate-extension}

Throughout the rest of the paper, $k\ge3$, all hypergraphs are finite, and $\N=\{1,2,\ldots\}$. For $n\in\N$, write $[n]=\{1,\ldots,n\}$. If $F$ is a $k$-graph, its $(k-1)$-shadow is
\[
\partial F=\bigl\{S\in\tbinom{V(F)}{k-1}:S\subseteq e\text{ for some }e\in E(F)\bigr\}.
\]
We first recall the palette framework of Lin, Sun, Wang and Zhou~\cite{LinSunWangZhou2026}.

\begin{definition}[$k$-palette and admission]\label{def:palette}
Fix $k\ge3$. A \emph{$k$-palette} is a pair $\Palt=(\C,\T)$, where $\C$ is a non-empty finite set, whose elements are called colors, and $\T\subseteq\C^k$ is the set of admissible ordered $k$-tuples. Its density is
\[
d(\Palt)=\frac{|\T|}{|\C|^k}.
\]
Let $F$ be a $k$-graph. 
We say that $F$ \emph{admits} $\Palt$ if there are a linear order $\prec$ on $V(F)$ and a coloring $\varphi:\partial F\to\C$ such that the following holds. 
For every edge $e=\{v_1,\ldots,v_k\}$ with $v_1\prec\cdots\prec v_k$, write
\[
p_i(e)=e\setminus\{v_i\}\quad(i\in[k]).
\]
Then
\[
\bigl(\varphi(p_1(e)),\ldots,\varphi(p_k(e))\bigr)\in\T.
\]
The set $p_i(e)$ is the $(k-1)$-subset obtained by deleting the vertex in position $i$. We call it the $i$th facet of $e$ and use this notation throughout.
\end{definition}

The standard random construction explains the role of palettes. Order the vertices of $[n]$ and color their $(k-1)$-subsets independently and uniformly from $\C$. Include an ordered $k$-set as an edge precisely when its facet colors form a tuple in $\T$. If $F$ does not admit $\Palt$, every resulting hypergraph is $F$-free. The concentration argument for this construction~\cite{LinSunWangZhou2026} yields
\begin{equation}\label{eq:palette-lower}
\pi_{k-2}(F)\ge d(\Palt).
\end{equation}

The next definition extends the minimum palette degree and minimum palette codegree introduced by Lamaison~\cite{Lamaison2024} for $3$-graphs.

\begin{definition}[Coordinate-extension degree]\label{def:extension-degree}
Let $\Palt=(\C,\T)$ be a $k$-palette. 
For $I\subseteq[k]$, let $\C^I$ be the set of functions from $I$ to $\C$. For $\boldsymbol c=(c_i)_{i\in I}\in\C^I$, define
\[
N_I^{\Palt}(\boldsymbol c)=\bigl\{(y_j)_{j\notin I}:(y_1,\ldots,y_k)\in\T,\ y_i=c_i\text{ for every }i\in I\bigr\}.
\]
Thus $N_I^{\Palt}(\boldsymbol c)$ consists of the assignments to the remaining coordinates that complete the prescribed colors to an admissible tuple.
For $0\le t\le k-1$, the \emph{$t$th coordinate-extension degree} of $\Palt$ is
\begin{equation}\label{eq:delta-t}
\delta_t(\Palt)=\min_{I\in\binom{[k]}t}\ \min_{\boldsymbol c\in\C^I}
\frac{|N_I^{\Palt}(\boldsymbol c)|}{|\C|^{k-t}}.
\end{equation}
\end{definition}

For $t=0$, the only choices are $I=\varnothing$ and the empty assignment, so $\delta_0(\Palt)=d(\Palt)$. For a $3$-palette, $\delta_1$ and $\delta_2$ are Lamaison's minimum palette degree and minimum palette codegree, respectively~\cite{Lamaison2024}.

For a $k$-graph $F$, define
\begin{equation}\label{eq:pi-ext}
\pi_t^{\ext}(F)=\sup\bigl\{\delta_t(\Palt):\Palt\text{ is a $k$-palette not admitted by }F\bigr\}.
\end{equation}
Here the supremum is taken over all finite non-empty color sets and all sets of admissible tuples. If there is no such palette, the supremum is defined to be zero. Since $\delta_0(\Palt)=d(\Palt)$, the palette theorem of Lin, Sun, Wang and Zhou~\cite[Theorem~1.4]{LinSunWangZhou2026} states that
\[
\pi_0^{\ext}(F)=\pi_{k-2}(F).
\]

We now define the structural class that characterizes the vanishing of $\pi_t^{\ext}$.

\begin{definition}[$t$-layered $k$-graph]\label{def:t-layered}
Let $0\le t\le k-1$. A $k$-graph $F$ is \emph{$t$-layered} if there are a linear order on $V(F)$ and a map $\varphi:\partial F\to\N$ with the following property. The distinct vectors in
\[
\bigl\{(\varphi(p_1(e)),\ldots,\varphi(p_k(e))):e\in E(F)\bigr\}
\]
can be ordered as $\boldsymbol a^1,\ldots,\boldsymbol a^r$, where $\boldsymbol a^s=(a_1^s,\ldots,a_k^s)$, so that for every $s\in[r]$ there is a set $I_s\in\binom{[k]}t$ satisfying
\begin{enumerate}[label=\textup{(D\arabic*)},ref=\textup{(D\arabic*)}]
\item\label{D1} the entries $a_h^s$, $h\notin I_s$, are pairwise distinct;
\item\label{D2} for every $h\notin I_s$,
\[
a_h^s\notin\{a_i^s:i\in I_s\}\cup\bigcup_{u<s}\{a_1^u,\ldots,a_k^u\}.
\]
\end{enumerate}
An edgeless $k$-graph is $t$-layered. We call the coordinates in $I_s$ the \emph{tail coordinates}, and the remaining coordinates the \emph{head coordinates}.
\end{definition}

Thus each vector has $k-t$ distinct head entries that occur neither among its tail entries nor in an earlier vector. Tail entries may coincide. The order is imposed on the distinct vectors, so edges with the same vector are considered together.

\subsection{Main results}\label{subsec:main-results}

We first state a local construction used in the proof. For a positive integer $M$, let
\[
q_{0,M}=1,\quad q_{t,M}=1+t+\cdots+t^M\quad(t\ge1),
\quad \text{and} \quad b_{k,t}=(k-t)\binom{k}{t}.
\]

\begin{theorem}\label{thm:local-intro}
For every $k\ge3$, $0\le t\le k-1$, and positive integer $M$, there is a $k$-palette $\Palt_{k,t,M}$ such that
\[
\delta_t(\Palt_{k,t,M})\ge (b_{k,t}q_{t,M})^{-(k-t)q_{t,M}},
\]
and every $k$-graph $G$ with $|\partial G|\le M$ that admits $\Palt_{k,t,M}$ is $t$-layered.
\end{theorem}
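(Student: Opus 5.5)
The plan is to build $\Palt_{k,t,M}$ as a product-type palette whose colors are vectors of ``labels'' indexed by a rooted structure of size $q=q_{t,M}$. I think of $q=1+t+\cdots+t^M$ as the number of nodes of a complete $t$-ary tree of depth $M$; for $t=0$ there is a single node. Concretely, take $\C=[b\,q]^{q}$ with $b=b_{k,t}$, so each color $c$ is a vector $(c(x))_{x}$ indexed by the tree nodes $x$, with values in $[bq]$. A tuple $(y_1,\dots,y_k)$ is declared admissible when there exist a set $I\in\binom{[k]}{t}$ and a ``witness pattern'' such that, at every node $x$, the head coordinates $h\notin I$ carry prescribed pairwise distinct labels $y_h(x)$ that avoid the labels of the tail coordinates. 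The prescribed labels are determined by $I$, by $h$, and by the tail colors at the parent of $x$. The factor $b=(k-t)\binom kt$ is there so that each choice of $(I,h)$ gets its own block of labels. This keeps the head labels disjoint across different choices of $I$ and head positions, which makes the later decoding unambiguous.

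\emph{Step 1 (degree bound).} Fix $I$ with $|I|=t$ and colors $\boldsymbol c\in\C^I$. I will use this same $I$ as the tail set and let the $k-t$ free coordinates be heads. For each head coordinate and each of the $q$ tree nodes, exactly one label out of $bq$ is forced. Hence at least $|\C|^{k-t}(bq)^{-(k-t)q}$ completions are admissible, which gives $\delta_t(\Palt_{k,t,M})\ge(b_{k,t}q_{t,M})^{-(k-t)q_{t,M}}$. This step should be routine once the definition is fixed. The one requirement is that the rule for the forced labels depends only on the tail data, so that it is well defined for arbitrary $\boldsymbol c$.

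\emph{Step 2 (structure).} Let $G$ admit the palette via an order and a coloring $\varphi:\partial G\to\C$, with $|\partial G|\le M$. I must produce a map $\psi:\partial G\to\N$ and an ordering of the distinct facet vectors satisfying \ref{D1} and \ref{D2}. For each edge I fix a witness $I_e$ and read off, node by node, the labels of its facets. The idea is a peeling argument along a root-to-leaf path of the tree.

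At the root, the head labels of an edge are ``fresh'' relative to its tails. I select as the first layer the edges whose head facets receive the extremal (say, latest-block) labels. I then pass to the child node indexed by a tail coordinate: the labels there were chosen as a function of the parent tail data, so the structure repeats on the remaining edges. Each layer removes at least one facet from consideration, because a head facet of a selected vector cannot be reused as a head facet later. Therefore at most $M$ rounds are needed, and depth $M$ suffices.

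The $t$-ary branching is needed because we do not know in advance which tail coordinate's history governs the next round. This is where the count $1+t+\dots+t^M$ comes from. Finally, I will define $\psi$ as the round in which a facet first appears as a head, refined by its label so that \ref{D1} holds.

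\emph{Main obstacle.} The hard part is designing the label rule so that \ref{D2} genuinely holds: head entries must avoid every entry of every earlier vector, not just that vector's own tails. A mere ``rank above tails'' rule fails here, since a facet could be a head in one edge and a tail in an earlier one. I will first try to order the vectors in reverse of the peeling order, so that heads are peeled last and vectors are listed from the inside out. I will then check by induction on the depth that a facet used as a head at level $\ell$ cannot occur in any vector peeled at deeper levels. The tree indexing is what should make this induction go through; if it does not, I will enlarge the branching to $k$ and accept a weaker constant.
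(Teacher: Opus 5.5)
Your proposal has a genuine gap. Step 2 carries the whole theorem, and what you give there is a plan whose central difficulty, \ref{D2}, you yourself leave open. The mechanism you sketch also cannot work as stated. Nothing in your set-up prevents a facet $S$ from being a head facet of two edges $e$ and $f$ whose tail facets carry different colors. Your $\psi$ (round of first appearance, refined by label) does not identify the facets of $e$ and $f$ coordinatewise, so $e$ and $f$ get distinct vectors sharing the head entry $\psi(S)$, and whichever is listed later violates \ref{D2} in any order. The only remedy is to coarsen $\psi$ by identifying the facets of $e$ and $f$ coordinatewise. A newly identified class may be a head elsewhere, though, so these identifications cascade. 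The missing idea is a palette in which this cascade is controlled.

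In the paper, colors are $t$-ary trees with labels in $\Sigma\times[q]$ whose numerical labels are distinct along every root-to-leaf path. A tuple is admissible for $I=\{i_1<\dots<i_t\}$ if three things hold: each head tree has root pair $(I,h)$; all head trees share a root numerical label; and the $r$th branch of every head tree is the $i_r$th tail tree truncated by one level. Then a shared head tree forces all $k$ coordinates of $e$ and $f$ to agree after truncation by one level; the tails may still differ at the leaves. So one merges coordinatewise and drops one level, maintaining a partition $\Qcal$ of $\partial G$ with $|\Qcal|\le s$ at depth $s$. This invariant, not a count of peeling rounds, is where $|\partial G|\le M$ is used, and the process ends with \ref{Q2}.

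Even after \ref{Q2} holds, you must exclude cyclic dependencies, in which a head block feeds as a tail into a chain of edges that returns to it. ``Peel the extremal labels first'' gives no argument for this. In the paper, along a dependency arc the labels of one head block reappear one level deeper, under the corresponding branch, in the next head block. A cycle of length $\ell\le|\Qcal|\le s$ would therefore put the root label of a tree at depth $\ell$ on one of its own root paths, contradicting path-distinctness. A topological order then yields \ref{D1} and \ref{D2} via Lemma~\ref{lem:quotient}.

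Your design points the wrong way on both counts. You let head labels at a node depend on tail labels at the \emph{parent} node, and you impose a \emph{horizontal} condition (heads avoid tails at the same node). What is needed is that tail data is copied \emph{down} into each head tree (head node $rw$ matches tail node $w$), together with a \emph{vertical} distinctness condition inside each tree. With all of $[bq]^q$ as colors, there is no within-tree constraint for a cycle to contradict.

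Step 1 has the right shape: it is one completion out of $|\C|^{k-t}$ with $|\C|\le(bq)^q$, exactly as in the paper. But it needs a fixed rule under which a completion exists for every tail assignment. In the paper this is the choice of a fresh root label, possible because the $t$ truncated branches use at most $q-1$ labels.
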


Our main theorem determines exactly when the coordinate-extension parameter is zero.

\begin{theorem}\label{thm:main-intro}
Let $k\ge3$, $0\le t\le k-1$, and let $F$ be a $k$-graph. Then
\[
\pi_t^{\ext}(F)=0
\quad\Longleftrightarrow\quad
F\text{ is }t\text{-layered}.
\]
Moreover, if $F$ is not $t$-layered, then
\[
\pi_t^{\ext}(F)\ge (b_{k,t}q_{t,|\partial F|})^{-(k-t)q_{t,|\partial F|}}>0.
\]
\end{theorem}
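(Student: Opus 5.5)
The proof splits into two directions. The ``moreover'' part, and hence the forward implication, will come from \cref{thm:local-intro}. The backward implication needs a separate argument showing that a $t$-layered $F$ admits every palette with $\delta_t>0$.

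\emph{Not $t$-layered $\Rightarrow$ positive.} Suppose $F$ is not $t$-layered and set $M=|\partial F|$. Apply \cref{thm:local-intro} with $G=F$ to the palette $\Palt_{k,t,M}$. Since $F$ satisfies $|\partial F|\le M$ and is not $t$-layered, the theorem's conclusion forces that $F$ does not admit $\Palt_{k,t,M}$. Hence $\Palt_{k,t,M}$ is a competitor in the supremum \eqref{eq:pi-ext}, which gives
\[
\pi_t^{\ext}(F)\ge\delta_t(\Palt_{k,t,M})\ge (b_{k,t}q_{t,M})^{-(k-t)q_{t,M}}>0.
\]
This proves the ``moreover'' statement and the implication $\pi_t^{\ext}(F)=0\Rightarrow F$ is $t$-layered.

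\emph{$t$-layered $\Rightarrow$ zero.} Let $F$ be $t$-layered, and let $\Palt=(\C,\T)$ be any palette with $\delta_t(\Palt)>0$. I will show that $F$ admits $\Palt$; then every non-admitted palette has $\delta_t=0$, so the supremum is $0$. The case where $F$ is edgeless is trivial.

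Take the linear order and the map $\varphi:\partial F\to\N$ witnessing $t$-layeredness, with distinct vectors $\boldsymbol a^1,\dots,\boldsymbol a^r$ and tail sets $I_1,\dots,I_r$. I will build a coloring $\psi:\N\to\C$ on the labels that actually occur, and then use $\psi\circ\varphi$ as the coloring of $\partial F$. It suffices that
\[
\bigl(\psi(a^s_1),\dots,\psi(a^s_k)\bigr)\in\T\quad\text{for every } s,
\]
because every edge $e$ has facet-label vector equal to some $\boldsymbol a^s$.

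Process $s=1,\dots,r$ in order, maintaining that $\psi$ is defined on every label appearing in $\boldsymbol a^1,\dots,\boldsymbol a^{s-1}$. At step $s$:
\begin{enumerate}
\item For each tail coordinate $i\in I_s$ whose label $a^s_i$ is not yet colored, assign it an arbitrary color. Equal labels receive the same color, so the tail is a well-defined $\boldsymbol c\in\C^{I_s}$.
\item Since $\delta_t(\Palt)>0$, the set $N_{I_s}^{\Palt}(\boldsymbol c)$ is nonempty. Choose a completion $(y_j)_{j\notin I_s}$ from it.
\item Set $\psi(a^s_h)=y_h$ for every head coordinate $h\notin I_s$.
\end{enumerate}
Step 3 is consistent for two reasons. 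By \ref{D1}, the head labels are pairwise distinct, so no label is asked to take two different colors. By \ref{D2}, each head label differs from every tail label of $\boldsymbol a^s$ and from every label used in an earlier vector, so none of them has been colored before. After step 3, the tuple $\boldsymbol a^s$ is mapped into $\T$, and later steps never recolor any label. Induction completes the argument.

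Labels not appearing in any vector do not matter. If $t=0$, then $I_s=\varnothing$ and the argument only uses $\T\ne\varnothing$, which is what $\delta_0>0$ gives.

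The substantive difficulty lies entirely in \cref{thm:local-intro}, which we may assume here. Within this proof, the only delicate point is the bookkeeping in step 3: the greedy assignment must never revisit a label. Condition \ref{D2}, which excludes the vector's own tail labels as well as all earlier labels, is exactly what guarantees this.
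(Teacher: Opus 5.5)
Your proof is correct and takes the same approach as the paper. The lower bound comes from applying \cref{thm:local-intro} with $M=|\partial F|$, and the converse is the same greedy label-by-label colouring as in Lemma~\ref{lem:greedy}, with \ref{D1} and \ref{D2} ensuring that head labels are distinct and uncoloured. One small point: \cref{thm:local-intro} is stated for positive $M$, so note that $M\ge1$, which holds because an edgeless $k$-graph is $t$-layered by definition, so a non-$t$-layered $F$ has an edge.
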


We next compare $t$-layeredness with existing notions.

A linear order on $V(F)$ is a \emph{vanishing order} if every facet occurs in the same coordinate in all ordered edges containing it. Equivalently, there is a map $\iota:\partial F\to[k]$ such that $\iota(p_i(e))=i$ for every edge $e$ and every $i\in[k]$.

A $k$-graph $F$ is \emph{min-layered} if it has a vertex order and a map $\varphi:\partial F\to\N$ such that each associated vector has pairwise distinct entries and, whenever two distinct vectors share a value, that value is the minimum entry of at least one of them. It is \emph{max-layered} if such an order and map can be chosen so that every vector has a unique maximum entry and distinct vectors have distinct maximum values.

\begin{proposition}\label{prop:endpoints}
Let $F$ be a $k$-graph.
\begin{enumerate}[label=\textup{(\roman*)}]
\item $F$ is $0$-layered if and only if it has a vanishing order.
\item $F$ is $1$-layered if and only if it is min-layered.
\item $F$ is $(k-1)$-layered if and only if it is max-layered.
\end{enumerate}
\end{proposition}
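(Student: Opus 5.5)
We prove each of the three equivalences directly from Definition~\ref{def:t-layered}, translating in both directions between the ordered list of distinct vectors $\boldsymbol a^1,\ldots,\boldsymbol a^r$ and the structure required by the other notion. The vertex order is kept throughout. In each direction we modify only the map $\varphi$, or the ordering of the vectors, or both.

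\emph{(i) $t=0$.} Here $I_s=\varnothing$, so (D1) and (D2) say that every vector has $k$ pairwise distinct entries and that no entry of $\boldsymbol a^s$ appears in an earlier vector. Hence distinct vectors have disjoint entry sets. Conversely, disjoint entry sets let us list the vectors in any order.

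Suppose $F$ is $0$-layered and a facet $S$ has value $\varphi(S)=a$. Then $a$ lies in exactly one vector $\boldsymbol a^s$, and in exactly one position $i$ of it. Every edge containing $S$ has $\varphi(S)=a$ in the position where $S$ sits, so that position must be $i$. This gives a vanishing order with $\iota(S)=i$.

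Conversely, given a vanishing order, set $\varphi(S)=\iota(S)$. Then every edge has vector $(1,\ldots,k)$. There is only one distinct vector, so (D2) is vacuous, and its entries are distinct.

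\emph{(ii) $t=1$.} A single tail coordinate $I_s=\{i_s\}$ and $k-1$ head entries that are new and distinct from each other and from the tail entry mean two things. First, each vector has pairwise distinct entries. Second, if $\boldsymbol a^u$ and $\boldsymbol a^s$ with $u<s$ share a value, that value is the tail entry of $\boldsymbol a^s$.

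From $1$-layered to min-layered, relabel $\varphi$ by an order-preserving bijection onto $\N$ so that the values first introduced by later vectors are larger. Concretely, assign fresh values in increasing blocks as $s$ increases. Then the tail entry of $\boldsymbol a^s$, which is old, is smaller than all its head entries, which are new. So any shared value is the minimum of the later vector.

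From min-layered to $1$-layered, order the distinct vectors by decreasing minimum entry, breaking ties arbitrarily, and let $I_s$ be the position of the minimum. Suppose a head entry $x$ of $\boldsymbol a^s$ occurred in an earlier $\boldsymbol a^u$. Then $x$ must be the minimum of $\boldsymbol a^u$ or of $\boldsymbol a^s$. It is not the minimum of $\boldsymbol a^s$, being a head entry, so $x=\min\boldsymbol a^u$. But then $\min\boldsymbol a^u=x>\min\boldsymbol a^s$, and $x$ appears in $\boldsymbol a^s$, so $\min\boldsymbol a^u\ge\min\boldsymbol a^s$ strictly as needed and the ordering is consistent.

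This tie-breaking step is the one I expect to require care. If $\min\boldsymbol a^u=\min\boldsymbol a^s$, then the value $x$ would have to equal that common minimum, which contradicts $x$ being a head entry of $\boldsymbol a^s$. So ties are harmless.

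\emph{(iii) $t=k-1$.} There is one head coordinate $h_s$, and its entry is new and differs from the tail entries of $\boldsymbol a^s$.

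From $(k-1)$-layered to max-layered, relabel values increasingly in order of first appearance. Each head value is larger than everything earlier, including its own tails, since those are old or... The tails of $\boldsymbol a^s$ need not be old, though. A tail entry could be new and still differ from the head entry. To handle this, assign the head value of $\boldsymbol a^s$ the largest label within its block of new values. Then the head is the unique maximum of $\boldsymbol a^s$. Head values are distinct across vectors because each is new when introduced, so the maxima are distinct.

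From max-layered to $(k-1)$-layered, order the vectors by increasing maximum and take the head to be the position of the maximum. An earlier vector has all entries below its own maximum, which is smaller than the current maximum, so the current head value does not appear earlier. It also differs from the current tails because the maximum is unique.
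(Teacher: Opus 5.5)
\textbf{Gap: the ordering in (ii), min-layered $\Rightarrow$ $1$-layered.} This direction fails as written because you order the vectors by \emph{decreasing} minimum. Suppose a head entry $x$ of $\boldsymbol a^s$ lies in an earlier $\boldsymbol a^u$. You correctly deduce $x=\min\boldsymbol a^u>\min\boldsymbol a^s$. Under a decreasing order, however, this is exactly what $u<s$ allows, and you yourself conclude that ``the ordering is consistent.'' So no contradiction is reached, and (D2) is never verified.

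The ordering genuinely fails. Take $k=3$, the order $0\prec1\prec2\prec3$, and the edges $\{1,2,3\}$ and $\{0,1,3\}$, which share the facet $\{1,3\}$. Label the facets $\{2,3\}\mapsto1$, $\{1,3\}\mapsto5$, $\{1,2\}\mapsto6$, $\{0,3\}\mapsto7$, $\{0,1\}\mapsto8$. The vectors are $(1,5,6)$ and $(5,7,8)$. The only shared value is $5$, the minimum of the second vector, so the witness is min-layered. Your order puts $(5,7,8)$ first. The vector $(1,5,6)$ then has tail coordinate $1$ and head entry $5$, which has already occurred, so (D2) fails.

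The fix is to order by \emph{non-decreasing} minimum, as the paper does. Then $u<s$ gives $x=\min\boldsymbol a^u\le\min\boldsymbol a^s<x$, a contradiction, and ties need no separate discussion.

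\textbf{The other parts are correct.} Part (i), both directions of (iii), and $1$-layered $\Rightarrow$ min-layered in (ii) all go through. In the latter two directions you relabel values in blocks by first appearance, placing the head last in its block in (iii). This is a valid and somewhat more direct substitute for the paper's acyclic tail-to-head digraph with topological relabeling. One wording point in (ii): you say the tail of $\boldsymbol a^s$ is ``old.'' That holds only when the tail is shared with an earlier vector, but that is the only case your argument uses.
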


The cases \(t=0\) and \((k,t)=(3,1)\) recover known characterizations. 
Indeed, \(\pi^{\mathrm{ext}}_0(F)=\pi_{k-2}(F)\), and the known zero-density characterization \cite{ReiherRodlSchacht2018JLMS} states that \(\pi_{k-2}(F)=0\) if and only if \(F\) has a vanishing order. 
Together with Proposition~\ref{prop:endpoints}(i), this is precisely the case \(t=0\) of Theorem~\ref{thm:main-intro}. 
For 3-graphs, Lamaison~\cite{Lamaison2024} proved that \(\pi^{\mathrm{ext}}_1(F)=0\) if and only if \(F\) is min-layered, which, by Proposition~\ref{prop:endpoints}(ii), is precisely the case \((k,t)=(3,1)\) of Theorem~\ref{thm:main-intro}. 
He also proved that max-layeredness implies \(\pi^{\mathrm{ext}}_2(F)=0\) and asked whether the converse holds.

\begin{question}[Lamaison~\cite{Lamaison2024}]\label{ques}
Is it true that $\pi_2^{\ext}(F)=0$ if and only if  $F$ is max-layered?
\end{question}

The case \((k,t)=(3,2)\) of Theorem \ref{thm:main-intro}, together with Proposition~\ref{prop:endpoints}(iii), answers this question affirmatively. 
Thus Theorem \ref{thm:main-intro} recovers the known cases \(t=0\) and \((k,t)=(3,1)\), settles the remaining case \((k,t)=(3,2)\) raised by Lamaison, 
and extends the characterization to every \(k\ge3\) and \(0\le t\le k-1\).

We also relate max-layeredness to layered hypergraphs.
Ding, Lamaison, Liu, Wang and Yang~\cite{DingLamaisonLiuWangYang2025}
introduced layeredness for $3$-graphs.
We use the extension of this notion to $k$-graphs given by
Yang, Fang and Chen~\cite{YangFangChen2026}.
For a map $f:V(F)\to\N$ and an edge $e$, let $f(e)$ be the multiset $\{f(v):v\in e\}$, where repeated values are kept. A $k$-graph $F$ is \emph{layered} if there is a map $f:V(F)\to\N$ such that every edge has a unique vertex of maximum $f$-value and any two edges with the same maximum value have the same label multiset. Proposition~1 of Yang, Fang and Chen~\cite{YangFangChen2026} states that one may choose $f$ with the following stronger property:
\begin{equation}\label{eq:strong-layered}
|f(e)\cap f(e')|\ge k-1\quad\Longrightarrow\quad f(e)=f(e')
\end{equation}
for all edges $e,e'$, where the intersection is counted with multiplicity.

\begin{theorem}\label{thm:layered}
Every layered $k$-graph is max-layered, and hence is $(k-1)$-layered.
\end{theorem}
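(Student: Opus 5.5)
The plan is to build the max-layered structure directly from a layering map $f$ that has the stronger property \eqref{eq:strong-layered}, which Proposition~1 of Yang, Fang and Chen lets us assume. The second assertion then follows at once from Proposition~\ref{prop:endpoints}(iii). We may assume $F$ has at least one edge.

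\textbf{Vertex order.} Choose a linear order $\prec$ on $V(F)$ such that $u\prec v$ whenever $f(u)<f(v)$, breaking ties arbitrarily. For each edge $e=\{v_1\prec\cdots\prec v_k\}$ the vertex $v_k$ is the unique vertex of maximum $f$-value $m(e)$. Moreover, the sequence $(f(v_1),\dots,f(v_k))$ is the sorted list of the multiset $f(e)$, so it depends only on $f(e)$.

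\textbf{Facet classes.} For a facet $S\in\partial F$, let $M(S)$ be the multiset $f(e)$ for any edge $e\supseteq S$. This is well defined. Indeed, if $S\subseteq e\cap e'$, then $f(S)\subseteq f(e)\cap f(e')$ has size $k-1$, so \eqref{eq:strong-layered} gives $f(e)=f(e')$. Let $\mathcal M$ be the finite set of multisets $f(e)$, $e\in E(F)$, enumerated as $\mu_1,\dots,\mu_R$. For $S$ with $M(S)=\mu_j=f(e)$, note that $f(S)=\mu_j\setminus\{x\}$ for a single removed value $x\in\mu_j$.

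\textbf{The map $\varphi$.} Set
\[
\varphi(S)=N\cdot j+\lambda_j(x),
\]
where $N$ is large. Here $\lambda_j$ assigns to each value $x$ occurring in $\mu_j$ an integer in $[1,N-1]$, injective in $x$, and is chosen so that the value $x=\max\mu_j$ receives the largest label. This max value is the $f$-value of the top vertex and occurs only once in $\mu_j$.

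\textbf{Verification.} For an edge $e$ with $f(e)=\mu_j$, we have $\varphi(p_i(e))=Nj+\lambda_j(f(v_i))$. The vector of $e$ is therefore determined by the sorted list of $\mu_j$, i.e.\ only by $j$. Hence distinct vectors correspond to distinct classes $j$.

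Within a vector, the entry at coordinate $k$ equals $Nj+\lambda_j(\max\mu_j)$. It is strictly larger than every other entry, because every $i<k$ has $f(v_i)<m(e)$ by uniqueness of the maximum vertex, and $\lambda_j$ is injective with the top label at $\max\mu_j$. So each vector has a unique maximum.

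Distinct vectors have distinct classes $j\ne j'$. Their maxima lie in the disjoint blocks $[Nj,Nj+N)$ and $[Nj',Nj'+N)$, so they differ. Thus $(\prec,\varphi)$ witnesses that $F$ is max-layered.

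\textbf{Main obstacle.} The crux is making $\varphi$ well defined on facets shared by several edges. A naive injective labelling of facets fails, since a facet can be the maximal facet of two different edges, which would force two distinct vectors to share a maximum value. This is resolved by letting $\varphi$ depend only on the pair $(M(S),f(S))$, so that all edges in a class get the same vector. The strong property \eqref{eq:strong-layered} is exactly what makes $M(S)$ independent of the chosen edge; the plain layered condition would not suffice.
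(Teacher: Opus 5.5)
Your proof is correct. It uses the same two ingredients as the paper: the strengthened map $f$ satisfying \eqref{eq:strong-layered}, and a vertex order that is non-decreasing in $f$. Like the paper, you finish with Proposition~\ref{prop:endpoints}(iii). The difference lies in the choice of $\varphi$.

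\emph{The paper's labelling.} The paper singles out the ``top facets'' $\mathcal B=\{p_k(e):e\in E(F)\}$. Using \eqref{eq:strong-layered}, it proves that each edge contains exactly one member of $\mathcal B$, namely its own $p_k(e)$. It then gives these facets distinct labels greater than $1$ and every other facet the label $1$. Every vector has the form $(1,\ldots,1,L)$, and vectors are indexed by top facets.

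\emph{Your labelling.} You let $\varphi(S)$ depend only on the pair $(M(S),f(S))$. The only consequence of \eqref{eq:strong-layered} you need is the one-line fact that a facet determines the label multiset of every edge through it. Your vectors are then indexed by label multisets. Vectors from different classes even have disjoint entry sets, which is more than max-layeredness requires.

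Repeated entries within one of your vectors are harmless. Max-layeredness, as defined in the paper, does not require distinct entries, and the paper's own vectors $(1,\ldots,1,L)$ repeat entries as well.

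\emph{Comparison.} The paper's labelling is sparser and makes the unique-top-entry structure immediately visible. Yours avoids the separate claim about top facets and records the whole layered structure in $\varphi$. Both arguments are about equally short.
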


We also recall the codegree Tur\'an density. If $H$ is a $k$-graph and $S\in\binom{V(H)}{k-1}$, define
\[
d_H(S)=\bigl|\{v\in V(H)\setminus S:S\cup\{v\}\in E(H)\}\bigr|
\quad \text{and} \quad
\delta_{\co}(H)=\min_{S\in\binom{V(H)}{k-1}}d_H(S).
\]
Let $\operatorname{ex}_{\co}(n,F)$ be the largest value of $\delta_{\co}(H)$ over all $n$-vertex $F$-free $k$-graphs $H$. The \emph{codegree Tur\'an density} of $F$ is
\[
\pi_{\co}(F)=\lim_{n\to\infty}\frac{\operatorname{ex}_{\co}(n,F)}{n}.
\]

Yang, Fang and Chen~\cite{YangFangChen2026} proved that $\pi_{\co}(F)=0$ implies that $F$ is layered. 
Therefore, for every $k$-graph $F$,
\[
\begin{gathered}
\pi_{\co}(F)=0\ \Longrightarrow\ F\text{ is layered}\ \Longrightarrow\ F\text{ is max-layered},\\
F\text{ is max-layered}\ \Longleftrightarrow\ F\text{ is $(k-1)$-layered}\ \Longleftrightarrow\ \pi_{k-1}^{\ext}(F)=0.
\end{gathered}
\]
The first two implications follow from \cite{YangFangChen2026} and
Theorem~\ref{thm:layered}, respectively, while the third and final
equivalences follow from Proposition~\ref{prop:endpoints}(iii) and
Theorem~\ref{thm:main-intro}, respectively.
For $3$-graphs, Yang, Fang and Chen~\cite{YangFangChen2026} also proved that
$\pi_{\co}(F)=0$ if and only if $F$ is layered and the uniform Tur\'an density $\pi_{1}(F)=0$; 
for every linear $3$-graph $F$, $\pi_{\co}(F)=0$ if and only if $F$ is layered.
The converse of Theorem~\ref{thm:layered} fails: the Fano plane is max-layered but not layered, as shown in Proposition~\ref{prop:fano-max-not-layered}.

Our next result gives an increasing sequence of exact values whose first term is the minimum positive value $k^{-k}$ and whose limit is $2/k^k$.
\begin{theorem}\label{thm:sequence}
For every $k\ge3$ and every $r\ge2$, there is a finite $k$-graph $F_{k,r}$ such that
\[
\pi_{k-2}(F_{k,r})=\frac{2(r-1)}{rk^k}.
\]
It follows that $2/k^k$ is an accumulation point of $\Piuk$, the set of $(k-2)$-uniform Tur\'an densities of single finite $k$-graphs. 
\end{theorem}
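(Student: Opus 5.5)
We prove Theorem~\ref{thm:sequence} via the palette theorem $\pi_{k-2}(F)=\pi_0^{\ext}(F)$ of Lin, Sun, Wang and Zhou: it suffices to build $F_{k,r}$ whose optimal non-admitted palette has density exactly $2(r-1)/(rk^k)$. For the lower bound we use a palette Lagrangian. Take colors indexed by $[r]\times[k]$ (or a blow-up of it) and admissible tuples chosen so that, after optimizing weights, the density is $\frac{2(r-1)}{r}\cdot k^{-k}$. Concretely, start from the extremal palette for the value $k^{-k}$, namely colors $1,\dots,k$ with the single tuple $(1,2,\dots,k)$; note that coordinate $i$ is forced to color $i$, which is the obstruction to a vanishing order. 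Double it into a palette $\Palt_r$ with two tuples, modelled on the construction $(r-1)/r$ for graphs: each coordinate is given weight $\tfrac1k$ with an ``$r$-part'' refinement on two designated coordinates. Its density is then $2\cdot\frac{r-1}{r}\cdot k^{-k}$, i.e.\ the $K_r$-type Lagrangian $\frac{r-1}{r}$ multiplying the two-tuple count. By \eqref{eq:palette-lower}, it then suffices to show that $F_{k,r}$ does not admit $\Palt_r$.

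Next we design $F_{k,r}$. It should admit every palette of density larger than $2(r-1)/(rk^k)$ but not $\Palt_r$. We would build it as a gluing of two gadgets. The first is a $k$-graph with no vanishing order, forcing density at least $k^{-k}$ (as in Lin--Wang--Zhou). The second encodes a copy of $K_{r+1}$: its vertices are pairs of facets whose colors must be pairwise ``distinct classes'', so that a palette with only $r$ classes in the refined coordinates fails to admit it. Non-admission of $\Palt_r$ then reduces to pigeonhole on $r$ classes versus $r+1$ required classes.

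The upper bound is the main obstacle: we must show that every palette $\Palt$ not admitted by $F_{k,r}$ satisfies $d(\Palt)\le 2(r-1)/(rk^k)$. The plan is a structural reduction. First, using Theorem~\ref{thm:local-intro}/Theorem~\ref{thm:main-intro} with $t=0$, we may assume $\Palt$ is not admitted by the non-vanishing gadget. We then run a symmetrization/Zykov-type argument on the color weights: merge or delete colors without decreasing the density while preserving non-admission. This should bring $\Palt$ to a normal form in which each coordinate's colors split into classes and the tuples form a ``product'' structure. In that normal form, non-admission of the first gadget should imply that at most two ``rainbow'' coordinate patterns are used, each contributing at most $k^{-k}$ by AM--GM. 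Non-admission of the $K_{r+1}$ gadget should bound the interaction factor by the Motzkin--Straus Lagrangian $\frac{r-1}{r}$. The delicate step is proving this two-pattern bound, i.e.\ that the first gadget forbids three or more compatible patterns. We would attempt it by a case analysis on which coordinates coincide across tuples, embedding the ordered gadget greedily.

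Finally, the values $\frac{2(r-1)}{rk^k}$ increase strictly in $r$ and converge to $2/k^k$. Hence $2/k^k$ is an accumulation point of $\Piuk$, and the case $r=2$ gives $k^{-k}$, consistent with the minimum positive value.
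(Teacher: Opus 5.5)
\textbf{There is a genuine gap in how you build $F_{k,r}$.} You never produce a single finite $k$-graph $F_{k,r}$ that admits \emph{every} palette of density above $\alpha=2(r-1)/(rk^k)$ while failing to admit the target. A ``gluing of two gadgets'' cannot do this. Admission passes to subgraphs, so the palettes not admitted by $F$ include all those not admitted by either gadget. Hence $\pi_{k-2}(F)$ is at least the larger of the two gadgets' densities.

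Your upper-bound step (``we may assume $\Palt$ is not admitted by the non-vanishing gadget \ldots\ non-admission of the $K_{r+1}$ gadget should bound \ldots'') uses both non-admissions at once. That runs the implication backwards, and Theorem~\ref{thm:main-intro} with $t=0$ does not supply it either. What you need is an ``or'' combination: $F$ must admit $\Palt$ as soon as \emph{some} forcing palette maps to $\Palt$.

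The paper gets this from the realization theorem of Lin, Sun, Wang and Zhou (Theorem~\ref{thm:realization}). Hypothesis \ref{E1} is verified by giving a homomorphism from each auxiliary palette into the symmetrization of every other one: Lemma~\ref{lem:repeated-to-sym} for switch palettes, and Lemma~\ref{lem:graph-to-sym} via the coordinate-cycle Lemma~\ref{lem:cycle} for $\mathcal Q_r$. These are then composed with the fact that no auxiliary palette maps to the target or its reverse. Hypothesis \ref{E2} is exactly the density bound described below.

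\textbf{The extremal structure and the obstruction are also misidentified.} Your $\Palt_r$ is never actually specified. A palette with two admissible tuples cannot have density $2(r-1)/(rk^k)$, and the value does not arise as ``two tuples times $\frac{r-1}{r}$''. In the paper's target $\mathcal P_{k,r}$, coordinates $1$ and $k$ draw from a common pool $A_1\cup\cdots\cup A_r$ with $x_1\in A_a$, $x_k\in A_b$ and $a<b$. Each middle coordinate $i$ uses its own class $D_i$. The density is $(2/k)^2\cdot\frac{r-1}{2r}\cdot k^{-(k-2)}$, where $\frac{r-1}{2r}$ is the arc density of a blown-up \emph{transitive} tournament.

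The orientation is essential. The symmetric variant (all $a\ne b$) has density $4(r-1)/(rk^k)$, and no $K_{r+1}$-type pigeonhole obstruction (any tournament on $r+1$ vertices in coordinates $1,k$) maps into it. So with your gadgets the upper bound would fail. The right forcing palettes are:
\begin{itemize}
\item $\mathcal Q_r$, which encodes a directed path of length $r$ in coordinates $1,k$ (a $2$-cycle already hosts it);
\item the switch palettes $\mathcal R_{p,q}$ for $\{p,q\}\ne\{1,k\}$, which force $C_p\cap C_q=\varnothing$.
\end{itemize}

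With these, the upper bound is short and needs no Zykov-type normal form. (Merging colours does not preserve non-admission in any case.) The $(1,k)$ digraph on $U=C_1\cup C_k$ has no directed walk of length $r$, so Lemma~\ref{lem:walk} gives at most $\frac{r-1}{2r}|U|^2$ arcs. AM--GM over the disjoint middle supports, followed by maximizing $x^2(1-x)^{k-2}$ at $x=2/k$, gives exactly $\alpha$. Your ``at most two rainbow patterns'' step, which you leave open, is not how the bound arises. Your final monotonicity and limit argument for the accumulation point is fine.
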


The sequence starts with $k^{-k},4/(3k^k),3/(2k^k),8/(5k^k),\ldots$. For $k=3$, the first two values are the known values $1/27$ and $4/81$.

We now state condition $\Sp$ of Lin, Wang and Zhou~\cite{LinWangZhou2025}. 
For an ordered edge $e=\{v_1,\ldots,v_k\}$ with $v_1\prec\cdots\prec v_k$, the facet $p_i(e)$ has \emph{type $i$}. 
Fix $1\le i<j\le k$. A $k$-graph $F$ satisfies $\Sp$ for $\{i,j\}$ if there are subgraphs $F^1_{i,j}$ and $F^2_{i,j}$ on the full vertex set $V(F)$ such that
$E(F)=E(F^1_{i,j})\mathbin{\cup}E(F^2_{i,j})$,
and there is a linear order on $V(F)$ that is vanishing for both subgraphs. For every facet $S\in\partial F^1_{i,j}\cap\partial F^2_{i,j}$, let its types in $F^1_{i,j}$ and $F^2_{i,j}$ be $a$ and $b$, respectively. We require
\[
(a,b)\in\{(\ell,\ell):\ell\in[k]\}\cup\{(i,j)\}.
\]
We say that $F$ satisfies $\Sp$ if it satisfies this condition for every pair in $\binom{[k]}2$.

Lin, Wang and Zhou proved that failure of $\Sp$ implies $\pi_{k-2}(F)\ge3/(k+1)^k$ and proposed the following conjecture.

\begin{conjecture}[Lin, Wang and Zhou~\cite{LinWangZhou2025}]\label{conj}
If a $k$-graph $F$ has no vanishing order and satisfies $\Sp$, then $\pi_{k-2}(F)=k^{-k}$.
\end{conjecture}

The following theorem determines the least density of a $k$-graph that fails $\Sp$.
\begin{theorem}\label{thm:threshold}
For every $k\ge3$,
\[
\min\bigl\{\pi_{k-2}(F):F\text{ does not satisfy }\Sp\bigr\}=\frac4{3k^k}.
\]
\end{theorem}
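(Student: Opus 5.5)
The statement has two halves, and I will prove them separately. First, every $k$-graph $F$ failing $\Sp$ satisfies $\pi_{k-2}(F)\ge 4/(3k^k)$. Second, some explicit $F_0$ fails $\Sp$ and has $\pi_{k-2}(F_0)\le 4/(3k^k)$. By the palette theorem of Lin, Sun, Wang and Zhou, $\pi_{k-2}(F)=\pi_0^{\ext}(F)$, so both halves are statements about palettes.

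\emph{Lower bound.} The plan is to adapt the Lin--Wang--Zhou argument with a sharper palette. Suppose $F$ fails $\Sp$ for a pair $\{i,j\}$. I want a palette $\Palt_{i,j}$ of density $4/(3k^k)$ such that every $F$ admitting $\Palt_{i,j}$ satisfies $\Sp$ for $\{i,j\}$; equivalently, $F$ does not admit it, so \eqref{eq:palette-lower} gives the bound.

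The candidate is the $k=3$ palette with density $4/81$ (Kr\'al', Ku\v{c}er\'ak, Lamaison, Tardos), lifted to general $k$:
\begin{itemize}
\item the colors are $[k]\times\{1,2,3\}$;
\item a tuple is admissible when its $\ell$th coordinate has first component $\ell$ for every $\ell$ (a vanishing-type constraint of density $k^{-k}$);
\item the second components lie in a set of $4$ of the $9$ patterns on coordinates $i,j$, chosen so that the two "sheets" can be read off.
\end{itemize}
Counting gives density $k^{-k}\cdot 4/3$, provided the patterns live in $\{1,2,3\}^{\{i,j\}}$ with $4$ out of $9$ and the other coordinates are free. Given an admitting coloring $\varphi$, I will split $E(F)$ into $F^1,F^2$ according to which admissible pattern an edge receives. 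The first components then give a common vanishing order for both parts. The pattern design must force every facet shared by $F^1$ and $F^2$ to have types $(\ell,\ell)$ or $(i,j)$. This design step is the crux of the lower bound: I will search the $4$-element subsets of the $3\times 3$ grid, guided by the $4/81$ palette, so that the shared colors behave correctly.

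\emph{Upper bound.} I will take $F_0=F_{k,2}$ from Theorem~\ref{thm:sequence}, which has density exactly $2\cdot 1/(2k^k)$. This is wrong, since that value is $k^{-k}$. So I need $F_{k,3}$, which gives $2\cdot 2/(3k^k)=4/(3k^k)$. By Theorem~\ref{thm:sequence}, $\pi_{k-2}(F_{k,3})=4/(3k^k)$, so it remains to show that $F_{k,3}$ fails $\Sp$. Alternatively, if some $F_{k,r}$ fails $\Sp$ at a smaller value, that contradicts the lower bound; hence $F_{k,2}$ must satisfy $\Sp$, which is consistent with Conjecture~\ref{conj} at density $k^{-k}$.

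To verify that $F_{k,3}$ fails $\Sp$, I will unpack its construction from the proof of Theorem~\ref{thm:sequence}. I expect it to be a tight gadget that forces, in every vertex order, a facet appearing with two types incompatible with any pair $\{i,j\}$. I will check every order and every split into $F^1,F^2$ by a structural argument: a facet occurring in three distinct types, or in a type pair $(a,b)$ with $a>b$, cannot be distributed. If $F_{k,3}$ does not directly fail $\Sp$, the fallback is to use the gadget that realizes $4/81$ for $k=3$ and lift it by adding $k-3$ new vertices to each edge in a vanishing way. That lift preserves the failure of $\Sp$ and the upper bound argument.

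The main obstacle is the palette design in the lower bound: it must achieve exactly $4/3$ and still force the $(i,j)$-only sharing condition.
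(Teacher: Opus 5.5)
Your overall framework (a palette that no graph failing $\Sp$ for $\{i,j\}$ admits, plus $F_{k,3}$ for the matching example) is the paper's. However, both halves have genuine gaps.

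\textbf{Lower bound.} Your palette cannot reach $4/(3k^k)$. Take colors $[k]\times\{1,2,3\}$, force coordinate $\ell$ to carry first component $\ell$, and restrict $(s_i,s_j)$ to $4$ of the $9$ patterns. This leaves $4\cdot 3^{k-2}$ admissible tuples, so the density is $4\cdot3^{k-2}/(3k)^k=4/(9k^k)$, not $\tfrac43 k^{-k}$.

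No choice of patterns fixes this. Whenever the coordinate supports are pairwise disjoint, $|\T|\le\prod_\ell|C_\ell|\le(|\C|/k)^k$, so the density is at most $k^{-k}$. Moreover, each color then determines its coordinate. Hence any $F$ admitting such a palette already has a vanishing order, and no shared facet can ever realize the type pair $(i,j)$.

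The missing idea is a color class \emph{shared} by coordinates $i$ and $j$. The paper's palette $\mathcal V_{i,j}$ is built as follows:
\begin{itemize}
\item it uses disjoint classes $A,B,C$ of size $2$ and $D_\ell$ of size $3$ for $\ell\ne i,j$;
\item it requires $x_i\in A\cup C$, $x_j\in B\cup C$ and $x_\ell\in D_\ell$;
\item it forbids $(x_i,x_j)\in C\times C$.
\end{itemize}
This gives $12\cdot3^{k-2}$ tuples on $3k$ colors, that is, density $4/(3k^k)$. Split the edges according to whether $x_j\in B$ or $x_j\in C$. The witnessing order is vanishing on both parts; in the second part $x_i\in A$ because $C\times C$ is excluded. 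A facet colored from $C$ has type $i$ in the first part and type $j$ in the second, which is exactly the exceptional pair that $\Sp$ allows.

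\textbf{Upper bound.} Choosing $F_{k,3}$ is correct. However, your plan to show that it fails $\Sp$ by unpacking its construction and checking all orders and splits is not workable. $F_{k,3}$ comes from Theorem~\ref{thm:realization} and is not explicit. The only usable information is that it does not admit $\mathcal P_{k,3}$.

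What closes the argument is Proposition~\ref{prop:W-equivalence}: $\Sp$ for $\{1,k\}$ is equivalent to admitting $\mathcal W_{1,k}$. There is a homomorphism $\mathcal W_{1,k}\to\mathcal P_{k,3}$ sending $a_1,\omega,a_k$ to elements of $A_1,A_2,A_3$ and each $a_\ell$ into $D_\ell$. The three tuples of $\mathcal W_{1,k}$ then have endpoint pairs in $A_1\times A_3$, $A_2\times A_3$ and $A_1\times A_2$, all admissible. So if $F_{k,3}$ satisfied $\Sp$ for $\{1,k\}$, it would admit $\mathcal P_{k,3}$, a contradiction.

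Your fallback of lifting a $3$-uniform $4/81$ example by adding vertices is unsupported. Nothing in your outline controls $\pi_{k-2}$ of the lifted graph or shows that it still fails $\Sp$.
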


A \emph{matching} in $[k]$ is a family $\mathcal M\subseteq\binom{[k]}2$ whose members are pairwise disjoint. Let $\rho:[k]\to[k]$ be the reversal $\rho(s)=k+1-s$, and extend it to pairs and families by
\[
\rho(\{p,q\})=\{\rho(p),\rho(q)\}\quad \text{and} \quad
\rho(\mathcal M)=\{\rho(h):h\in\mathcal M\}.
\]
We consider non-empty matchings $\mathcal M$ such that $\rho(\mathcal M)=\mathcal M$ and $q-p$ is even for every $\{p,q\}\in\mathcal M$ with $p<q$.

\begin{theorem}\label{thm:matching}
Let $k\ge3$ and let $\mathcal M\subseteq\binom{[k]}2$ be a non-empty matching of size $m$. Suppose that $\rho(\mathcal M)=\mathcal M$ and $q-p$ is even for every $\{p,q\}\in\mathcal M$ with $p<q$. There is a finite $k$-graph $F_{\mathcal M}$ such that
\begin{enumerate}[label=\textup{(\roman*)}]
\item $F_{\mathcal M}$ satisfies $\Sp$ for every pair in $\binom{[k]}2$;
\item $F_{\mathcal M}$ has no vanishing order;
\item $\displaystyle \pi_{k-2}(F_{\mathcal M})=\frac{2^m}{k^k}$.
\end{enumerate}
\end{theorem}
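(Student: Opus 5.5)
We construct $F_{\mathcal M}$ explicitly and prove the three properties in turn. Property (iii) is handled by the palette theorem $\pi_{k-2}=\pi_0^{\ext}$ of Lin, Sun, Wang and Zhou: a lower bound by exhibiting a palette not admitted by $F_{\mathcal M}$, and an upper bound by showing $F_{\mathcal M}$ admits every palette of density exceeding $2^m/k^k$.

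\emph{The palette.} Take $\C=[k]\times\{0,1\}$, a "position'' together with a "bit'', so that $|\C|^k=2^kk^k$. Let $\T$ consist of the tuples $((1,\epsilon_1),\dots,(k,\epsilon_k))$ such that
\[
\epsilon_p\oplus\epsilon_q=1 \quad\text{for every } \{p,q\}\in\mathcal M,
\]
with $\epsilon_s$ free for unmatched $s$. Then $|\T|=2^{k-m}$, so $d=2^{k-m}/(2^kk^k)$, which is the wrong way round. We therefore replace it by the construction that multiplies density by $2^m$: for each $\{p,q\}\in\mathcal M$, allow the tuple in which coordinates $p$ and $q$ carry each other's positions, $(q,\cdot)$ at $p$ and $(p,\cdot)$ at $q$. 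This gives a palette on $\C=[k]$ with $\T$ the set of $2^m$ tuples obtained from $(1,\dots,k)$ by independently applying the transpositions in $\mathcal M$. Its density is $2^m/k^k$.

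\emph{Choosing $F_{\mathcal M}$.} The graph must avoid this palette while satisfying $\Sp$ and having no vanishing order. Admitting the palette means there is a vertex order and a map $\iota:\partial F\to[k]$ recording types up to the allowed swaps. We build $F_{\mathcal M}$ as a chain of "gadgets'' indexed by $\mathcal M$. Each gadget forces a facet to appear in types $p$ and $q$ in two edges under every order. The symmetry $\rho(\mathcal M)=\mathcal M$ is used so that reversing the order does not help. The parity of $q-p$ is used so that an odd cycle closes up: the gadget forces an odd number of swaps, contradicting any consistent choice of transpositions.

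\emph{Verifying the three properties.}
\begin{enumerate}[label=(\arabic*)]
\item $\Sp$ for each pair $\{i,j\}$: split the edges into the gadget edges using the swap $(i,j)$ versus the rest. When $\{i,j\}\notin\mathcal M$, a vanishing split exists trivially.
\item No vanishing order: this follows because every order forces a facet into two types.
\item Upper bound on $\pi_{k-2}(F_{\mathcal M})$: for a palette of density greater than $2^m/k^k$, pigeonhole as in Reiher--R\"odl--Schacht's $k^{-k}$ argument. This yields colours $c_1,\dots,c_k$ together with, for each pair in $\mathcal M$, a swapped tuple that is also admissible, or else some other extra admissible tuple that allows an embedding. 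We then embed $F_{\mathcal M}$ using these tuples.
\end{enumerate}

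\emph{Main obstacle.} The hardest step is the upper bound. The density exceeding $2^m/k^k$ must yield, by an averaging or Lagrangian-type argument over the colour classes, a structure rich enough to embed $F_{\mathcal M}$. Our first attempt is induction on $m$, peeling off one symmetric pair $\{p,q\},\{\rho(p),\rho(q)\}$ at a time, combined with Theorem~\ref{thm:threshold}-style threshold arguments.
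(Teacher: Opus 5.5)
Your final target palette is correct. The $2^m$ tuples obtained from $(1,\dots,k)$ by independently applying the transpositions in $\mathcal M$ form a palette isomorphic to the paper's $\mathcal B_{\mathcal M}$ (identify $x_h,y_h$ with $p,q$). The framework is also right: a lower bound from non-admission, and an upper bound by showing that $F_{\mathcal M}$ admits every palette of density exceeding $2^m/k^k$. However, $F_{\mathcal M}$ is never defined, and the step you flag as the main obstacle is where the missing idea lies.

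First, the structure forced by $d(\Palt)>2^m/k^k$ cannot be ``a swapped tuple for each pair in $\mathcal M$'': $\mathcal B_{\mathcal M}$ already has all of these, and $F_{\mathcal M}$ must not admit it. What is actually forced is one of two things:
\begin{itemize}
\item $C_p(\Palt)\cap C_q(\Palt)\neq\varnothing$ for some $\{p,q\}\notin\mathcal M$, which gives $\mathcal R_{p,q}\to\Palt$; or
\item for some $h=\{p,q\}\in\mathcal M$, the $(p,q)$-coordinate digraph on $U_h=C_p\cup C_q$ has more than $|U_h|^2/2$ arcs, so by Mantel's theorem it contains a copy of $T_3$ or $\overrightarrow{C}_3$.
\end{itemize}
If neither occurs, then $|\T|\le\prod_{h\in\mathcal M}(|U_h|^2/2)\prod_{s\notin V(\mathcal M)}|C_s|\le 2^m(|\C|/k)^k$ by AM--GM over the pairwise disjoint supports. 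The support-separation part resembles the $k^{-k}$ argument. The new ingredient is a triangle, that is, non-bipartiteness, in a matched coordinate pair, and no induction on $m$ is needed.

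Second, you need a single finite $F$ that admits all these obstruction palettes at once but not $\mathcal B_{\mathcal M}$. Gluing gadgets does not give this by itself: a disjoint union admits a palette only if every component does, so every piece would have to admit every obstruction. The paper obtains $F_{\mathcal M}$ from the realization theorem, Theorem~\ref{thm:realization}. Verifying its mixed-product hypothesis \ref{E1} is where the parity assumption genuinely enters.

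For $h=\{p,q\}\in\mathcal M$ one takes $\mathcal Q_h=\mathcal Q(J_{r_h};p,q)$, where $J_{r_h}$ is an orientation of an odd cycle with three properties:
\begin{itemize}
\item being an odd cycle, it gives $\mathcal Q_h\not\to\mathcal B_{\mathcal M}$;
\item it maps to $T_3$ and $\overrightarrow{C}_3$, so it catches the Mantel triangles;
\item it maps to $\overrightarrow{C}_{q-p+1}$. This is needed because the symmetrization of any palette containing an admissible tuple with pairwise distinct entries has a directed $(q-p+1)$-cycle in coordinates $p,q$ (Lemma~\ref{lem:cycle}).
\end{itemize}
An orientation of an odd cycle maps to $\overrightarrow{C}_{q-p+1}$ only when $q-p+1$ is odd. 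This, rather than ``forcing an odd number of swaps'', is the role of parity. Indeed, forcing a facet into types $p$ and $q$ is compatible with admitting $\mathcal B_{\mathcal M}$, since $x_h$ and $y_h$ occur in both coordinates.

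Finally, $\Sp$ for $\{i,j\}\notin\mathcal M$ is not trivial, because $F_{\mathcal M}$ has no vanishing order. It holds because $F_{\mathcal M}$ admits $\mathcal R_{i,j}$ and $\mathcal R_{i,j}\to\mathcal W_{i,j}$. For $\{p,q\}\in\mathcal M$ it holds because $\mathcal Q_h\to\mathcal W_{p,q}$ via $J_{r_h}\to T_3$. Both facts rest on $F_{\mathcal M}$ admitting the auxiliary palettes, which only the realization step supplies.
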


For odd $k$, the matching $\{\{1,k\}\}$ yields density $2/k^k$. 
For even $k\ge4$, the matching $\{\{1,3\},\{k-2,k\}\}$ yields density $4/k^k$. 
Thus Conjecture \ref{conj} is false for every $k\ge3$. Moreover, for odd $k$ and every $1\le s\le(k-1)/2$, the matching
\[
\mathcal M_s=\bigl\{\{1,k\},\{2,k-1\},\ldots,\{s,k+1-s\}\bigr\}
\]
satisfies these conditions. It follows that $2^s/k^k$ is the density of a single finite $k$-graph that satisfies $\Sp$ and has no vanishing order.

\subsection{Organization and proof outline}\label{subsec:outline}

Section~\ref{sec:Preliminaries} recalls palette homomorphisms, reversal, products, symmetrization, the realization theorem, and a palette description of $\Sp$. 
It also establishes the easy direction of Theorem \ref{thm:main-intro} and gives a quotient-digraph characterization of $t$-layered $k$-graphs.
Section~\ref{sec:trees} constructs the universal palette using ordered labelled $t$-ary trees and proves
Theorems~\ref{thm:local-intro} and~\ref{thm:main-intro}. Section~\ref{sec:relation-layered}
derives Proposition \ref{prop:endpoints} and Theorem \ref{thm:layered}. Section~\ref{sec:exact}
develops the symmetrization lemmas needed for Theorem \ref{thm:sequence} and
then uses them to obtain this theorem. Section~\ref{sec:obstructions}
establishes Theorems \ref{thm:threshold} and \ref{thm:matching}. Finally,
Section~\ref{sec:concluding-remarks} records the monotonicity relations
and applies the main theorem to cherry Tur\'an density.

The universal palette uses ordered labelled $t$-ary trees. Each class of facets is assigned a labelled tree. In an admissible tuple, the $t$ tail trees occur, in their coordinate order, as the branches of every head tree. If two occurrences of one head class determine different tuples of classes, we merge the classes in the two tuples coordinate by coordinate and shorten all trees by one level. The number of classes and the tree depth decrease together. When the process stops, every head class determines one ordered tuple. The head classes of each edge form a block. An arc joins the block containing a tail class to the block of head classes of that edge. A directed cycle would force one numerical label to occur at both the root and a lower node on the same root-to-leaf path, which is impossible.

For the exact densities, we use the palette determination and realization theorems of Lin, Sun, Wang and Zhou directly. Symmetrization creates directed cycles in any chosen pair of coordinates. The density upper bounds then follow from a directed walk estimate or from Mantel's theorem.

\section{Preliminaries}\label{sec:Preliminaries}

Let $\Palt=(\C,\T)$ and $\Palt'=(\C',\T')$ be $k$-palettes. A \emph{palette homomorphism} $f:\Palt\to\Palt'$ is a map $f:\C\to\C'$ such that
\[
(c_1,\ldots,c_k)\in\T
\quad\Longrightarrow\quad
(f(c_1),\ldots,f(c_k))\in\T'.
\]
If $F$ admits $\Palt$ and $f:\Palt\to\Palt'$, composing the witnessing coloring with $f$ shows that $F$ admits $\Palt'$. A \emph{palette isomorphism} is a bijective palette homomorphism whose inverse is also a palette homomorphism.

The \emph{reverse palette} is
\[
\rev(\Palt)=\bigl(\C,\{(c_k,\ldots,c_1):(c_1,\ldots,c_k)\in\T\}\bigr).
\]
Reversing a witnessing vertex order shows that $F$ admits $\Palt$ if and only if it admits $\rev(\Palt)$. For palettes $\Palt_\nu=(\C_\nu,\T_\nu)$, $1\le\nu\le s$, their product is the palette with color set $\prod_{\nu=1}^s\C_\nu$ in which a tuple
\[
\bigl((c_{1,1},\ldots,c_{s,1}),\ldots,(c_{1,k},\ldots,c_{s,k})\bigr)
\]
is admissible if and only if $(c_{\nu,1},\ldots,c_{\nu,k})\in\T_\nu$ for every $\nu\in[s]$. 

For $i\in[k]$, the \emph{coordinate support} of $\Palt$ in position $i$ is
\[
C_i(\Palt)=\{c\in\C:c\text{ occurs in coordinate }i\text{ of some tuple in }\T\}.
\]

We next recall symmetrization from~\cite{LinSunWangZhou2026}. Let $\Sym_r$ denote the symmetric group on $[r]$. If $a_1,\ldots,a_r$ are distinct integers, their \emph{relative order} is the unique permutation $\sigma\in\Sym_r$ such that $\sigma(u)<\sigma(v)$ exactly when $a_u<a_v$. For $\tau\in\Sym_k$ and $i\in[k]$, let $\partial_i\tau\in\Sym_{k-1}$ be the relative order of
\[
\tau(1),\ldots,\tau(i-1),\tau(i+1),\ldots,\tau(k).
\]
For every $c\in\C$ and $\sigma\in\Sym_{k-1}$, let $c^\sigma$ be a formal copy of $c$. For $\boldsymbol c=(c_1,\ldots,c_k)\in\T$ and $\tau\in\Sym_k$, define
\[
\tau\cdot\boldsymbol c=
\bigl(c_{\tau(1)}^{\partial_1\tau},\ldots,c_{\tau(k)}^{\partial_k\tau}\bigr).
\]
The \emph{symmetrization} $\Palt^{\Sym}$ has color set $\{c^\sigma:c\in\C,\ \sigma\in\Sym_{k-1}\}$ and admissible tuples $\tau\cdot\boldsymbol c$ for all $\tau\in\Sym_k$ and $\boldsymbol c\in\T$.

We use the following result of Lin, Sun, Wang and Zhou~\cite[Lemma~6.1]{LinSunWangZhou2026}, derived there from their palette equality and multi-palette classification theorems.

\begin{theorem}[{Lin, Sun, Wang and Zhou~\cite[Lemma~6.1]{LinSunWangZhou2026}}]\label{thm:realization}
Let $s\ge1$, let $\alpha\in[0,1]$, and let $\Palt_0,\Palt_1,\ldots,\Palt_s$ be $k$-palettes with $d(\Palt_0)=\alpha$. Assume that:
\begin{enumerate}[label=\textup{(E\arabic*)}]
\item\label{E1} for every $i\in[s]$, there is no homomorphism from
\[
\Palt_i\times\prod_{j\in[s]\setminus\{i\}}\Palt_j^{\Sym}
\]
to $\Palt_0$ or to $\rev(\Palt_0)$;
\item\label{E2} every $k$-palette $\Palt$ with $d(\Palt)>\alpha$ admits a homomorphism from some $\Palt_i$ to $\Palt$ or to $\rev(\Palt)$.
\end{enumerate}
Then there is a finite $k$-graph $F$ such that $\pi_{k-2}(F)=\alpha$, $F$ admits every $\Palt_i$, and $F$ does not admit $\Palt_0$.
\end{theorem}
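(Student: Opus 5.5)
The plan is to build $F$ as a disjoint union $F=F_1\sqcup\cdots\sqcup F_s$. Here each $F_i$ is a finite \emph{Ramsey witness} for the product palette
\[
\Qcal_i=\Palt_i\times\prod_{j\in[s]\setminus\{i\}}\Palt_j^{\Sym}.
\]
The density bounds will then follow from the palette equality $\pi_{k-2}(F)=\pi_0^{\ext}(F)$ of~\cite[Theorem~1.4]{LinSunWangZhou2026}, using \ref{E2} for the upper bound and \eqref{eq:palette-lower} for the lower bound.

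\textbf{Step 1 (Ramsey witnesses).} For each $i$, I would take a large ordered vertex set and form the random-type $k$-graph whose $(k-1)$-sets are coloured by $\Qcal_i$ so that every small pattern of colours occurs. The edges are the ordered $k$-sets whose facet colours lie in the admissible set of $\Qcal_i$. This $k$-graph admits $\Qcal_i$ by construction. By an ordered hypergraph Ramsey argument, applied to an arbitrary vertex order and an arbitrary colouring $\psi:\partial F_i\to\C_0$, there is a large subconfiguration on which $\psi$ depends only on the $\Qcal_i$-colour. The induced order is then either the original order or its reverse. Such a canonical configuration would yield a homomorphism $\Qcal_i\to\Palt_0$ or $\Qcal_i\to\rev(\Palt_0)$, which \ref{E1} forbids. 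Hence $F_i$ does not admit $\Palt_0$. Since admission of a disjoint union requires admission by every component, $F$ does not admit $\Palt_0$, and \eqref{eq:palette-lower} gives $\pi_{k-2}(F)\ge d(\Palt_0)=\alpha$.

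\textbf{Step 2 (admission of every $\Palt_i$).} The coordinate projection $\Qcal_i\to\Palt_i$ is a palette homomorphism, so $F_i$ admits $\Palt_i$. For $j\ne i$, the projection gives that $F_j$ admits $\Palt_i^{\Sym}$. I would then use the fact from~\cite{LinSunWangZhou2026} that admitting the symmetrization forces admitting the palette itself. The formal copies $c^\sigma$ record the relative order in which a facet sits, which allows one to re-order the vertices consistently. Choosing the witnessing orders and colourings componentwise, $F$ admits every $\Palt_i$.

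\textbf{Step 3 (upper bound).} Let $\Palt$ be a palette with $d(\Palt)>\alpha$. By \ref{E2} there is a homomorphism $\Palt_i\to\Palt$ or $\Palt_i\to\rev(\Palt)$ for some $i$. Since $F$ admits $\Palt_i$, it admits $\Palt$ or $\rev(\Palt)$, and hence it admits $\Palt$ by reversal invariance. Thus every palette not admitted by $F$ has density at most $\alpha$, and the palette equality gives $\pi_{k-2}(F)\le\alpha$.

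\textbf{Main obstacle.} I expect the hard part to be Step 1. One must produce a \emph{finite} $F_i$ whose non-admission of $\Palt_0$ follows only from the absence of homomorphisms out of $\Qcal_i$. This needs a careful ordered Ramsey/compactness argument in which the vertex order of the witness may be arbitrary. That freedom is precisely why the symmetrized factors appear in $\Qcal_i$: they absorb all permutations of positions. It is also why $\rev(\Palt_0)$ appears in \ref{E1}. I would first try a product-Ramsey theorem for ordered $(k-1)$-set colourings, canonizing both the order pattern and the $\C_0$-colour at once.
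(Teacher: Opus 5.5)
Your Step~2 relies on a false implication, and the disjoint-union design fails with it. Admitting $\Palt^{\Sym}$ does not force admitting $\Palt$. The true statement runs the other way, and is stronger. If $F$ admits $\Palt$ via $(\prec',\chi)$, then for \emph{every} linear order $\prec$ it admits $\Palt^{\Sym}$ via $S\mapsto\chi(S)^{\sigma_S}$. Here $\sigma_S$ records how $\prec'$ orders $S$ relative to $\prec$.

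Your converse already fails for the one-tuple palette $\mathcal O=([k],\{(1,\ldots,k)\})$, which is admitted exactly by $k$-graphs with a vanishing order. By Lemma~\ref{lem:unique-placement}, every color $1^\sigma$ of $\mathcal O^{\Sym}$ occurs in every coordinate, so $\mathcal R_{p,q}\to\mathcal O^{\Sym}$. The Ramsey witness of $\mathcal R_{p,q}$ from your Step~1 therefore admits $\mathcal O^{\Sym}$. It cannot admit $\mathcal O$, because $C_p(\mathcal O)\cap C_q(\mathcal O)=\varnothing$.

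This breaks the construction itself, not only its justification. A disjoint union admits a palette only if every component does, so each $F_j$ would have to admit every $\Palt_i$. By your own Step~1, that forces $\Qcal_j\to\Palt_i$ or $\Qcal_j\to\rev(\Palt_i)$. In the application of Section~\ref{sec:exact}, take $\Palt_i=\mathcal Q_r$ and $\Palt_j=\mathcal R_{p,q}$ with $\{p,q\}\ne\{1,k\}$. Then $\Qcal_j$ has a color occurring in coordinates $p$ and $q$, while $\mathcal Q_r$ and its reverse have disjoint supports there, so the requirement fails. Without Step~2, Step~3 has nothing to work with. By the palette equality, $\pi_{k-2}$ of a disjoint union is simply the maximum over its components.

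Nor can you drop the symmetrized factors and use one Ramsey witness of the plain product $\prod_i\Palt_i$. In the same application, every two coordinates of that product have disjoint supports. So it maps to $\mathcal O$ and then to $\mathcal P_{k,r}$, and its witness admits $\Palt_0$. Thus \textup{(E1)} is strictly weaker than requiring that the plain product have no homomorphism to $\Palt_0$.

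The shape of \textup{(E1)} points to the missing idea. A single $F$ should admit the various $\Palt_i$ under \emph{different} vertex orders $\prec_i$. A Ramsey argument must then show that any admission of $\Palt_0$ canonizes, on a substructure, to some $\prec_i$ or its reverse. Reading the other colorings through $\prec_i$ yields exactly the $\Palt_j^{\Sym}$ factors, by the property above. That is the role of symmetrization, not absorbing permutations inside a single witness.

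The paper does not carry this out. It imports the statement from~\cite{LinSunWangZhou2026}, where it is derived from the palette equality and the multi-palette classification. Your Steps~1 and~3 are usable ingredients, but the construction that combines them is missing.
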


We also use the characterization $\pi_{k-2}(F)=0$ if and only if $F$ has a vanishing order~\cite{ReiherRodlSchacht2018JLMS,LinWangZhou2025}.

We now describe condition $\Sp$ using a palette. Fix $1\le i<j\le k$. Let $a_1,\ldots,a_k,\omega$ be pairwise distinct colors, and let $\mathcal W_{i,j}$ have exactly the following three admissible tuples:
\[
(a_1,\ldots,a_k),
\quad(a_1,\ldots,a_{i-1},\omega,a_{i+1},\ldots,a_k), 
\quad \text{and} \quad (a_1,\ldots,a_{j-1},\omega,a_{j+1},\ldots,a_k).
\]

\begin{proposition}\label{prop:W-equivalence}
A $k$-graph $F$ satisfies $\Sp$ for $\{i,j\}$ if and only if it admits $\mathcal W_{i,j}$.
\end{proposition}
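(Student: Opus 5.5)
We prove the two directions separately. In each direction we translate between a decomposition $E(F)=E(F^1_{i,j})\cup E(F^2_{i,j})$ with a common vanishing order and a coloring $\varphi:\partial F\to\{a_1,\ldots,a_k,\omega\}$ that witnesses admission of $\mathcal W_{i,j}$. Both directions use the same linear order on $V(F)$. The guiding observation is that in every admissible tuple of $\mathcal W_{i,j}$, coordinate $\ell\notin\{i,j\}$ always carries $a_\ell$. Coordinates $i$ and $j$ carry $(a_i,a_j)$, $(\omega,a_j)$ or $(a_i,\omega)$. So the color $\omega$ marks a facet that sits in position $i$ in some edges and in position $j$ in others.

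\emph{From $\Sp$ to admission.} Fix the common vanishing order $\prec$ and the maps $\iota_1:\partial F^1_{i,j}\to[k]$ and $\iota_2:\partial F^2_{i,j}\to[k]$ recording the types. We define $\varphi(S)=\omega$ if $S\in\partial F^1_{i,j}\cap\partial F^2_{i,j}$ with $(\iota_1(S),\iota_2(S))=(i,j)$. Otherwise $S$ has a well-defined type $\ell$, since either it lies in only one shadow or its two types agree, and we set $\varphi(S)=a_\ell$.

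Now take an edge $e$. Say $e\in F^1_{i,j}$; the case $e\in F^2_{i,j}$ is symmetric. For $\ell\ne i$, the facet $p_\ell(e)$ has $\iota_1=\ell\ne i$, so it is not an $\omega$-facet and $\varphi(p_\ell(e))=a_\ell$. The facet $p_i(e)$ receives $a_i$ or $\omega$. Hence the color vector of $e$ is $(a_1,\ldots,a_k)$ or the second tuple. In $F^2_{i,j}$ only position $j$ can receive $\omega$, which gives the third tuple. Every edge vector is therefore admissible in $\mathcal W_{i,j}$.

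\emph{From admission to $\Sp$.} Take the witnessing order $\prec$ and coloring $\varphi$. Put $e$ into $F^1_{i,j}$ if its vector is $(a_1,\ldots,a_k)$ or has $\omega$ at position $i$. Put $e$ into $F^2_{i,j}$ if its vector is $(a_1,\ldots,a_k)$ or has $\omega$ at position $j$. Edges with the all-$a$ vector lie in both subgraphs, which is allowed since only $E(F)=E(F^1_{i,j})\cup E(F^2_{i,j})$ is required.

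In $F^1_{i,j}$, a facet colored $a_\ell$ occurs only at position $\ell$, because $a_\ell$ appears in every tuple only at coordinate $\ell$. A facet colored $\omega$ occurs only at position $i$. Hence $\prec$ is vanishing for $F^1_{i,j}$, and likewise for $F^2_{i,j}$, where $\omega$ sits at position $j$.

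For a facet $S$ in both shadows, if $\varphi(S)=a_\ell$ its types are $(\ell,\ell)$, and if $\varphi(S)=\omega$ they are $(i,j)$. This is exactly the required condition.

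The only delicate point is bookkeeping. The subgraphs must be spanning and their union must be all of $E(F)$. Facets lying in just one shadow impose no constraint. Both directions are otherwise routine, so no step is a genuine obstacle.
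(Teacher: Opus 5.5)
Your proof is correct and follows the paper's argument in both directions. The differences are cosmetic. You color a facet lying in only one shadow by $a_\ell$ even when its type is $i$ (in $\partial F^1_{i,j}$) or $j$ (in $\partial F^2_{i,j}$), where the paper uses $\omega$. You also place the all-$a$ edges in both subgraphs rather than only in $F^2_{i,j}$, which is fine because $\Sp$ only requires $E(F)=E(F^1_{i,j})\cup E(F^2_{i,j})$.
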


\begin{proof}
Suppose first that $F$ admits $\mathcal W_{i,j}$. 
Fix a witnessing order and coloring. 
Let $F^1_{i,j}$ contain the edges whose tuple has $\omega$ in coordinate $i$, and let $F^2_{i,j}$ contain all remaining edges. 
Thus the tuples used by $F^2_{i,j}$ are the all $a$-tuple and the tuple with $\omega$ in coordinate $j$.

In $F^1_{i,j}$, the color $\omega$ occurs only in coordinate $i$, and each $a_\ell$ occurs only in coordinate $\ell$. Hence the witnessing order is vanishing for $F^1_{i,j}$. The same argument, with $j$ in place of $i$, applies to $F^2_{i,j}$. Let a facet belong to an edge of each subgraph. Its color is the same in both edges. A color $a_\ell$ means that the facet has type $\ell$ in both subgraphs, while $\omega$ means that it has type $i$ in $F^1_{i,j}$ and type $j$ in $F^2_{i,j}$. Thus the type condition in $\Sp$ holds.

Conversely, suppose that $F^1_{i,j},F^2_{i,j}$ and one vertex order witness $\Sp$ for $\{i,j\}$. The vanishing property makes the type of every facet in each subgraph well defined. 
Color a facet in both shadows by $a_\ell$ if its type is $\ell$ in both subgraphs, and by $\omega$ if its types are $i$ in $F^1_{i,j}$ and $j$ in $F^2_{i,j}$. If a facet lies only in $\partial F^1_{i,j}$, color it by $\omega$ when its type is $i$, and by $a_\ell$ when its type is $\ell\ne i$. If it lies only in $\partial F^2_{i,j}$, use $\omega$ when its type is $j$, and $a_\ell$ when its type is $\ell\ne j$. 

The two edge sets cover $E(F)$, so the two shadows cover $\partial F$. The three cases above are disjoint, and the condition on common facets specifies a unique color in the intersection. Thus the coloring is well defined on $\partial F$.

Let $e\in E(F^1_{i,j})$. For every $\ell\ne i$, the facet $p_\ell(e)$
receives $a_\ell$. The facet $p_i(e)$ receives either $a_i$ or
$\omega$. Hence the tuple of $e$ is either
$(a_1,\ldots,a_k)$
or the tuple obtained from it by replacing $a_i$ with $\omega$.
Similarly, every edge of $F^2_{i,j}$ receives either the all-$a$ tuple or
the tuple with $\omega$ in coordinate $j$. Therefore $F$ admits $\mathcal W_{i,j}$.
\end{proof}

For $p<q$, define the \emph{switch palette} $\mathcal R_{p,q}$ to have two admissible tuples with pairwise distinct entries $\boldsymbol u=(u_1,\ldots,u_k)$ and $\boldsymbol v=(v_1,\ldots,v_k)$. They share exactly one color $z$, where $u_p=v_q=z$; all other entries of the two tuples are pairwise distinct and different from $z$. Define a color map by
\[
z\mapsto\omega,\quad u_i\mapsto a_i\ (i\ne p),\quad \text{and} \quad v_i\mapsto a_i\ (i\ne q).
\]
The tuple $\boldsymbol u$ is mapped to the admissible tuple of $\mathcal W_{p,q}$ with $\omega$ in coordinate $p$, and $\boldsymbol v$ is mapped to the one with $\omega$ in coordinate $q$. Hence
\begin{equation}\label{eq:R-to-W}
\mathcal R_{p,q}\longrightarrow\mathcal W_{p,q}.
\end{equation}

For a digraph $D$, write $V(D)$ for its vertex set and $A(D)$ for its arc set. A directed walk of length $r$ in $D$ is a sequence $(v_0,\ldots,v_r)$ such that $v_{s-1}v_s\in A(D)$ for every $s\in[r]$.

\begin{lemma}[{Lin, Sun, Wang and Zhou~\cite[Lemma~6.4]{LinSunWangZhou2026}}]\label{lem:walk}
Let $n>r\ge1$. If a digraph on $n$ vertices contains no directed walk of length $r$, then it has at most $(r-1)n^2/(2r)$ arcs.
\end{lemma}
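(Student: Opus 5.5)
The plan is to reduce the statement to Tur\'an's theorem for $r$-partite graphs, via a level function built from longest walks. Throughout, $A(D)\subseteq V(D)\times V(D)$, so there are no parallel arcs.

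First we show that $D$ is acyclic. If $D$ contained a directed cycle, including a loop $vv$, then traversing it repeatedly would give directed walks of every length, in particular of length $r$. So $D$ has no directed cycle. In particular, for two distinct vertices $u,v$ at most one of the arcs $uv$ and $vu$ is present.

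Next we define the levels. For each $v\in V(D)$, let $f(v)$ be the maximum length of a directed walk ending at $v$; the trivial walk $(v)$ has length $0$. By hypothesis there is no walk of length $r$, so $f(v)\in\{0,1,\ldots,r-1\}$; acyclicity is what makes this maximum finite in the first place. If $uv\in A(D)$, appending $v$ to a longest walk ending at $u$ gives $f(v)\ge f(u)+1$. Hence every arc joins two vertices in different level classes $V_\ell=f^{-1}(\ell)$, for $0\le \ell\le r-1$.

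Now we count arcs. Combining the two previous steps, the underlying simple graph of $D$ has no edge inside any $V_\ell$. Each unordered pair of vertices lying in different classes carries at most one arc. Therefore $|A(D)|$ is at most the number of edges of the complete $r$-partite graph with parts $V_0,\ldots,V_{r-1}$, namely
\[
\frac{1}{2}\Bigl(n^2-\sum_{\ell}|V_\ell|^2\Bigr).
\]
By Cauchy--Schwarz, $\sum_\ell |V_\ell|^2\ge n^2/r$, so this is at most $\frac12\bigl(1-\frac1r\bigr)n^2=(r-1)n^2/(2r)$.

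There is no real obstacle. The only points needing care are excluding loops and $2$-cycles, and noting that the levels take at most $r$ values rather than $r+1$. The hypothesis $n>r$ plays no role in the bound itself.
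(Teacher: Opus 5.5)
Your proof is correct and complete. The paper gives no proof of this lemma; it imports it from Lin, Sun, Wang and Zhou, so there is no in-paper argument to compare against. Your route is the standard one: the longest-walk levels $f$ properly $r$-colour the underlying simple graph, and the complete $r$-partite count $\frac12\bigl(n^2-\sum_\ell|V_\ell|^2\bigr)\le (r-1)n^2/(2r)$ finishes the bound.

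A minor streamlining: the separate acyclicity step is not needed. Once $f$ is well defined, it strictly increases along arcs. This alone excludes loops and antiparallel pairs. Well-definedness with values in $\{0,\ldots,r-1\}$ follows directly from the hypothesis, because any longer walk has an initial segment of length $r$.

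Your remark that $n>r$ plays no role is right. Consequently, the separate treatment of the case $|U|\le r$ in the proof of Lemma~\ref{lem:upper-sequence} could be dropped.
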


The following lemma proves one direction of Theorem \ref{thm:main-intro}. 
Its proof also explains the role of the head coordinates.

\begin{lemma}\label{lem:greedy}
If a $k$-graph $F$ is $t$-layered, then it admits every $k$-palette $\Palt$ with $\delta_t(\Palt)>0$. It follows that $\pi_t^{\ext}(F)=0$.
\end{lemma}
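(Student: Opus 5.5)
The plan is a greedy coloring that processes the distinct vectors in the layering order. Fix the linear order on $V(F)$, the map $\varphi:\partial F\to\N$, and the ordering $\boldsymbol a^1,\ldots,\boldsymbol a^r$ with tail sets $I_1,\ldots,I_r$ given by Definition~\ref{def:t-layered}. The edgeless case is trivial. Let $\Palt=(\C,\T)$ satisfy $\delta_t(\Palt)>0$. It suffices to build a map $\psi:\N\to\C$, defined on the finitely many labels that occur, such that $(\psi(a^s_1),\ldots,\psi(a^s_k))\in\T$ for every $s\in[r]$. Then $\psi\circ\varphi$ is a coloring of $\partial F$. Every edge $e$ has tuple $(\psi(a^s_1),\ldots,\psi(a^s_k))$, where $\boldsymbol a^s$ is the vector of $e$. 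Hence $F$ admits $\Palt$ with the same vertex order.

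The map $\psi$ is defined on labels in stages $s=1,\ldots,r$. We keep the invariant that after stage $s-1$, $\psi$ is defined exactly on $L_{s-1}=\bigcup_{u<s}\{a^u_1,\ldots,a^u_k\}$, possibly together with some labels fixed in earlier stages.

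At stage $s$, consider the tail labels $a^s_i$ with $i\in I_s$. First give every tail label that is still undefined an arbitrary color. Tail labels may coincide with each other, and this causes no problem because $\psi$ is a function on labels. This fixes $\boldsymbol c=(\psi(a^s_i))_{i\in I_s}\in\C^{I_s}$. Since $\delta_t(\Palt)>0$, the set $N^{\Palt}_{I_s}(\boldsymbol c)$ is non-empty; choose some $(y_h)_{h\notin I_s}$ in it. When $t=0$, $I_s=\varnothing$ and we only use that $\T\neq\varnothing$.

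Next set $\psi(a^s_h)=y_h$ for every head coordinate $h\notin I_s$. This is consistent for three reasons. By \ref{D1}, the head labels are pairwise distinct, so no label receives two prescribed values. By \ref{D2}, no head label equals a tail label of $\boldsymbol a^s$. Also by \ref{D2}, no head label occurs in an earlier vector, so it was not colored at an earlier stage.

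One detail in this step needs care. A head label of $\boldsymbol a^s$ might have been colored arbitrarily as a \emph{tail} label of the current stage. This is ruled out by \ref{D2}. A head label also cannot be an earlier tail label, since earlier tail labels lie in earlier vectors. Hence every head label is fresh when assigned, and the tuple for $\boldsymbol a^s$ is exactly $(c,y)\in\T$. Later stages never recolor these labels, so every vector ends with an admissible tuple.

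Finally, the consequence follows. Every palette $\Palt$ not admitted by $F$ has $\delta_t(\Palt)=0$, so the supremum in \eqref{eq:pi-ext} is $0$; it is also $0$ by convention if no such palette exists. The main point requiring attention is exactly the freshness bookkeeping of the previous step, which \ref{D1} and \ref{D2} are designed to supply. Beyond that the argument is routine.
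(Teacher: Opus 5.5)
Your proof is correct and follows the paper's argument essentially step for step. Like the paper, you process the distinct vectors in the layering order, color new tail labels arbitrarily, fill the head labels from a completion guaranteed by $\delta_t(\Palt)>0$, use \ref{D1} and \ref{D2} to see that the head labels are still uncolored, and pull the label coloring back through $\varphi$; the only slip is that the phrase ``possibly together with some labels fixed in earlier stages'' in your invariant is redundant, since after stage $s-1$ the map $\psi$ is defined exactly on $L_{s-1}$.
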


\begin{proof}
Choose the order $\prec$, the map $\varphi$, the distinct vectors $\boldsymbol a^1,\ldots,\boldsymbol a^r$, and the tail sets $I_1,\ldots,I_r$ from Definition~\ref{def:t-layered}. 
We define a color $\gamma(x)\in\C$ for every integer $x$ that occurs in one of the vectors.

Suppose that \(\boldsymbol a^1,\ldots,\boldsymbol a^{s-1}\) have been processed.
For every tail value \(a_i^s\), \(i\in I_s\), retain its color if it has already received one; otherwise, choose any color, using the same color for equal tail values. 
By \ref{D1}, the head values \(a_h^s\), \(h\notin I_s\), are pairwise distinct. 
By \ref{D2}, each of them differs from every current tail value and from every value appearing in an earlier vector. At this point, only values appearing in earlier vectors and the current tail values have received colors.
Hence no current head value has yet received a color.

Since \(\delta_t(\Palt)>0\), the prescribed colors \((\gamma(a_i^s))_{i\in I_s}\) can be extended to an admissible tuple.
Assign to each head value \(a_h^s\), \(h\notin I_s\), the color in coordinate \(h\) of one such completion. 
The pairwise distinction of the head values ensures that these assignments are well defined.

After all vectors have been processed, every value in \(\varphi(\partial F)\) has received a color. 
For \(S\in\partial F\), define \(\chi(S)=\gamma(\varphi(S))\). 
Thus facets with the same \(\varphi\)-value receive the same color. 
For every edge \(e\in E(F)\), its associated vector is one of \(\boldsymbol a^1,\ldots,\boldsymbol a^r\), and the construction ensures that \((\chi(p_1(e)),\ldots,\chi(p_k(e)))\in\T\). 
Therefore \(\prec\) and \(\chi\) witness that \(F\) admits \(\Palt\).
\end{proof}

To prove the converse, we describe $t$-layeredness through a partition of the shadow. Fix a vertex order on a $k$-graph $G$, choose $I(e)\in\binom{[k]}t$ for each edge $e$, and let $\Qcal$ be a partition of $\partial G$. Write $Q(S)$ for the class containing $S$ and
\[
\boldsymbol Q(e)=(Q(p_1(e)),\ldots,Q(p_k(e))).
\]
For this edge, classes in coordinates of $I(e)$ are called tail classes, and those in the remaining coordinates are called head classes. We require
\begin{enumerate}[label=\textup{(Q\arabic*)},ref=\textup{(Q\arabic*)}]
\item\label{Q1} the head classes of each edge are pairwise distinct and differ from all its tail classes;
\item\label{Q2} if $h\notin I(e)$, $h'\notin I(f)$, and $Q(p_h(e))=Q(p_{h'}(f))$, then
\[
I(e)=I(f),\quad  h=h',\quad \text{and} \quad \boldsymbol Q(e)=\boldsymbol Q(f).
\]
\end{enumerate}

For each edge $e$, let $B_e=\{Q(p_h(e)):h\notin I(e)\}$ be its \emph{head block}. By \ref{Q2}, intersecting head blocks are equal: a common class determines the same tuple and the same tail set. Thus the distinct head blocks, together with the singleton sets $\{X\}$ for classes $X$ that are never heads, partition $\Qcal$.

The \emph{dependency digraph} has these blocks as its vertices. For each edge $e$ and each $i\in I(e)$, draw an arc from the block containing $Q(p_i(e))$ to $B_e$. Repeated arcs are counted once. This definition refers to edges and their chosen tail sets; it does not assume that equal tuples already have equal tail sets.

\begin{lemma}\label{lem:quotient}
A $k$-graph $G$ is $t$-layered if and only if there are a vertex order, sets $I(e)\in\binom{[k]}t$ for its edges, and a partition $\Qcal$ of $\partial G$ satisfying \ref{Q1} and \ref{Q2} for which the dependency digraph is acyclic.
\end{lemma}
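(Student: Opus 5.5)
We prove both directions by translating between the ordered list of distinct vectors in Definition~\ref{def:t-layered} and a topological order of the dependency digraph.

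\emph{Forward direction.} Suppose $G$ is $t$-layered, witnessed by an order, a map $\varphi$, vectors $\boldsymbol a^1,\ldots,\boldsymbol a^r$ and tail sets $I_1,\ldots,I_r$. Take the same vertex order. For an edge $e$ with vector $\boldsymbol a^s$, set $I(e)=I_s$. Let $\Qcal$ be the partition of $\partial G$ into fibres of $\varphi$, so that classes correspond to values.

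Condition \ref{Q1} is exactly \ref{D1} together with the first part of \ref{D2}. For \ref{Q2}, suppose a value $x$ is a head value of $\boldsymbol a^s$ in coordinate $h$ and a head value of $\boldsymbol a^u$ in coordinate $h'$. If $u<s$, then \ref{D2} for $\boldsymbol a^s$ forbids $x$ from occurring in $\boldsymbol a^u$; the case $u>s$ is symmetric. Hence $s=u$, and then $h=h'$ by \ref{D1}. The tuples $\boldsymbol Q(e)=\boldsymbol Q(f)$ agree because they are determined by the common vector, and $I(e)=I(f)=I_s$.

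Acyclicity of the dependency digraph also follows from \ref{D2}. Each head block $B_e$ is the set of head values of a unique vector $\boldsymbol a^s$, so index it by $s$. Index a non-head singleton by $0$. Consider an arc from the block containing a tail value $x$ of $\boldsymbol a^s$ to $B_e$, which has index $s$. If $x$ is a head value of $\boldsymbol a^u$, then $u\ne s$ by \ref{D2}. Moreover $u>s$ is impossible, since \ref{D2} for $\boldsymbol a^u$ says $x$ does not occur in the earlier vector $\boldsymbol a^s$. So $u<s$, and every arc strictly increases the index. Therefore there are no cycles.

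\emph{Backward direction.} Given the data with an acyclic dependency digraph, fix a topological order of the head blocks $B^1,\ldots,B^r$; singleton blocks of non-heads are sources and can be ignored. Define $\varphi$ by sending each class injectively to a positive integer, so that vectors are the tuples $\boldsymbol Q(e)$.

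By \ref{Q2}, all edges $e$ whose head block is $B^s$ share the same tuple $\boldsymbol Q(e)$ and the same tail set. Hence each head block determines one vector $\boldsymbol a^s$ with tail set $I_s$. Every distinct vector has at least one head coordinate because $t\le k-1$, so every vector arises from some block. Distinct blocks give distinct vectors, again by \ref{Q2}. Listing the vectors in the topological order, \ref{D1} and the tail part of \ref{D2} come from \ref{Q1}.

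It remains to show that a head class $X\in B^s$ does not occur in any earlier vector $\boldsymbol a^u$ with $u<s$. If $X$ occurs in $\boldsymbol a^u$ as a head, then \ref{Q2} forces $u=s$, a contradiction. If $X$ occurs as a tail of $\boldsymbol a^u$, take an edge realizing it; this gives an arc from $B^s$ to $B^u$, contradicting the topological order since $u<s$.

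\emph{Main obstacle.} The step needing most care is the backward direction's bookkeeping: the dependency digraph is defined edgewise with chosen $I(e)$, and one must confirm that equal tuples carry equal tail sets. This is exactly what \ref{Q2} guarantees through the shared head class. The case $t=0$ is harmless, since there are then no arcs and all entries are heads.
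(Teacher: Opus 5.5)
Your forward direction is correct and matches the paper's. The backward direction follows the paper's route: label classes injectively, list vectors along a topological order of head blocks, and check \ref{D2} via \ref{Q2} and arcs. However, the step you single out as the main obstacle is justified incorrectly.

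You claim that distinct head blocks give distinct vectors, i.e.\ that equal tuples carry equal tail sets, ``by \ref{Q2} through the shared head class''. But \ref{Q2} applies only once $e$ and $f$ share a head class, and when $\boldsymbol Q(e)=\boldsymbol Q(f)$ with $I(e)\ne I(f)$ there need not be one. The head-coordinate sets $[k]\setminus I(e)$ and $[k]\setminus I(f)$ can be disjoint whenever $2t\ge k$, and then $B_e\cap B_f=\varnothing$. Here is a concrete case with $k=3$ and $t=2$. Take two disjoint edges $e,f$ with the same tuple $(X,Y,Z)$ of three distinct classes, and set $I(e)=\{1,2\}$, $I(f)=\{1,3\}$. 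Then \ref{Q1} and \ref{Q2} hold, yet the distinct blocks $B_e=\{Z\}$ and $B_f=\{Y\}$ give the same vector. (The dependency digraph there contains the $2$-cycle $\{Y\}\leftrightarrow\{Z\}$, so the lemma itself is not contradicted.) Thus \ref{Q1} and \ref{Q2} alone do not make the map from head blocks to vectors injective. Without injectivity, your list $\boldsymbol a^1,\ldots,\boldsymbol a^r$ may repeat a vector with two different tail sets, which is not a witness in the sense of Definition~\ref{def:t-layered}. Your appeal to \ref{Q2} does work when $t<k/2$, since any two head-coordinate sets then meet. It fails in general, and in particular for $(k,t)=(3,2)$.

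The missing ingredient is acyclicity, which is how the paper handles this step. Suppose $\boldsymbol Q(e)=\boldsymbol Q(f)$ and $B_e\cap B_f=\varnothing$. Then every head class of $e$ lies in a tail coordinate of $f$, and vice versa. This gives arcs $B_e\to B_f$ and $B_f\to B_e$, a directed $2$-cycle. Alternatively, you can repair your write-up with your own final paragraph, applied before identifying vectors. Index the tuples by the head blocks in topological order, possibly with repetitions. That paragraph then shows that no class of $B^s$ occurs in the tuple attached to any earlier block $B^u$. Since every class of $B^s$ occurs in its own tuple, tuples attached to distinct blocks must differ. With this step supplied, the rest of your argument goes through.
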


\begin{proof}
Suppose first that $G$ is $t$-layered. Choose the order, map $\varphi$, vector order, and tail sets in Definition~\ref{def:t-layered}. Partition $\partial G$ by equality of $\varphi$-values. Assign to every edge the tail set of its vector, so edges with equal vectors receive the same tail set.

Condition \ref{Q1} follows from \ref{D1} and \ref{D2}. For \ref{Q2}, suppose that a class is a head for both $e$ and $f$. If their vectors were distinct, its common label would be a head entry in the later vector that had already occurred in the earlier vector, contrary to \ref{D2}. Hence the vectors are equal and have the same tail set. By \ref{D1}, their head entries are distinct, so the common head class occurs in the same coordinate. This proves \ref{Q2}.

Assign to each head block the index of its vector, and assign index zero to each class block that is never a head block. Consider an arc $B_e\to B_f$ between head blocks. Some class in $B_e$ is a tail class of $f$. The vectors of $e$ and $f$ cannot be equal, by \ref{Q1}. The vector of $e$ cannot occur later, because its head label would then have occurred as a tail label in the earlier vector of $f$, contrary to \ref{D2}. Thus the index strictly increases along this arc. An arc from a block of index zero also increases the index. The digraph is therefore acyclic.

Conversely, suppose that the stated order, tail sets, and partition exist. We first show that equal tuples have equal tail sets and equal head blocks. Let $\boldsymbol Q(e)=\boldsymbol Q(f)$. If $B_e\cap B_f\ne\varnothing$, condition \ref{Q2} gives $I(e)=I(f)$ and $B_e=B_f$. Otherwise, a class in $B_e$ occurs as a tail class of $f$, so there is an arc $B_e\to B_f$. Similarly, there is an arc $B_f\to B_e$. This contradicts acyclicity. Thus equal tuples have the same tail set and head block.

Assign distinct positive integers to the classes of $\Qcal$ and label each facet by the integer of its class. Choose a topological order of the dependency digraph, that is, an order of its vertices in which every arc goes forward. Order the distinct tuples $\boldsymbol Q(e)$ by the positions of their head blocks. This is well defined: equal tuples have equal head blocks, and intersecting head blocks determine equal tuples by \ref{Q2}.

The head classes of a tuple are distinct and differ from its tail classes by \ref{Q1}. Suppose that a current head class occurs in an earlier tuple. It cannot be a head class there, since \ref{Q2} would make the tuples equal. If it is a tail class there, an arc goes from the current head block to the earlier head block, contradicting the topological order. Thus no head class occurs in an earlier tuple. The assigned integer vectors satisfy \ref{D1} and \ref{D2}, so $G$ is $t$-layered.
\end{proof}

\section{The universal palette}\label{sec:trees}

We now prove Theorem~\ref{thm:local-intro}. The case $t=0$ does not require trees. Let $\Palt_{k,0,M}$ have color set $[k]$ and the single admissible tuple $(1,2,\ldots,k)$. Then $\delta_0(\Palt_{k,0,M})=k^{-k}$. If a $k$-graph $G$ admits this palette, every edge has the vector $(1,2,\ldots,k)$, so \ref{D1} and \ref{D2} hold with no tail coordinates. Thus $G$ is $0$-layered. Since $b_{k,0}=k$ and $q_{0,M}=1$, the required estimate also holds.

We may therefore suppose that $1\le t\le k-1$. We follow the labelled-tree and partition-merging argument of Yang, Fang and Chen~\cite[Lemmas~2 and~3]{YangFangChen2026}. Here the trees have $t$ ordered branches, and each root records the tail set and the head coordinate it represents. Fix $M\ge1$, and write
\[
q=q_{t,M}=1+t+\cdots+t^M
\quad \text{and} \quad
\Sigma=\bigl\{(I,h):I\in\tbinom{[k]}t,\ h\in[k]\setminus I\bigr\}.
\]
Thus $|\Sigma|=b_{k,t}$.

For $0\le s\le M$, let $T_s$ be the complete rooted $t$-ary tree of depth $s$ with vertex set $\bigcup_{j=0}^s[t]^j$, where $[t]^0=\{\varnothing\}$ and $[t]^j$ is the set of all words of length $j$ over $[t]$. The root is the empty word $\varnothing$, and the $t$ children of a word $w$ of length less than $s$ are $w1,\ldots,wt$. Thus $T_s$ has $1+t+\cdots+t^s$ vertices. For a word $w$, let $|w|$ be its length. If $r\in[t]$, then $rw$ denotes the word obtained by placing $r$ before $w$. A word $u$ is a \emph{proper prefix} of $w$ if $w=uv$ for some non-empty word $v$.

Let $\C_s$ be the collection of labellings $A:T_s\to\Sigma\times[q]$ with the following property. Write $A(w)=(\tau_A(w),\nu_A(w))$, where $\tau_A(w)\in\Sigma$ and $\nu_A(w)\in[q]$, and require
\begin{equation}\label{eq:path-distinct}
\nu_A(u)\ne\nu_A(w)\quad \text{whenever $u$ is a proper prefix of $w$}.
\end{equation}
Thus the numerical labels are distinct along each path from the root; they may repeat in different branches. We identify an element of $\C_s$ with the corresponding labelled tree.

For $A\in\C_s$ and $0\le a\le s$, let $A|_a$ be the restriction of $A$ to $T_a$. In other words, $A|_a$ is obtained by deleting the vertices at depths $a+1,\ldots,s$. In particular, $A|_a\in\C_a$. If $s\ge1$, the subtrees rooted at the children $1,\ldots,t$ all have depth $s-1$. For $r\in[t]$, define $A^r:T_{s-1}\to\Sigma\times[q]$ by $A^r(w)=A(rw)$. Under the identification $w\leftrightarrow rw$, this is the labelling of the subtree rooted at $r$. Since placing the same first letter before two words preserves the ancestor relation, \eqref{eq:path-distinct} gives $A^r\in\C_{s-1}$. Thus $A^1,\ldots,A^t$ are the ordered branches rooted at the children of the root.

Let $\boldsymbol A=(A_1,\ldots,A_k)\in\C_s^k$, where $s\ge1$, and let $I=\{i_1<\cdots<i_t\}\in\binom{[k]}t$. For this tuple and this choice of $I$, consider the following conditions:
\begin{enumerate}[label=\textup{(T\arabic*)}]
\item\label{T1} $\tau_{A_h}(\varnothing)=(I,h)$ for every $h\notin I$;
\item\label{T2} all head trees $A_h$, $h\notin I$, have the same numerical label at the root;
\item\label{T3} $A_h^r=A_{i_r}|_{s-1}$ for every $h\notin I$ and $r\in[t]$.
\end{enumerate}
The coordinates in $I$ are tails and the other coordinates are heads. Thus every head tree records the restrictions of the tail trees in the increasing order of their coordinates. Unlike the multiset relation in~\cite{YangFangChen2026}, these branches retain their order.

\begin{lemma}\label{lem:completion}
For every $I=\{i_1<\cdots<i_t\}\in\binom{[k]}t$ and every choice of tail trees $A_{i_1},\ldots,A_{i_t}\in\C_M$, there are head trees $A_h\in\C_M$, $h\notin I$, such that $(A_1,\ldots,A_k)$ satisfies \ref{T1}--\ref{T3} for $I$.
\end{lemma}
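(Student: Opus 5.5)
The construction is explicit: every head tree is built from one common template, and the labels are chosen so that \eqref{eq:path-distinct} holds. Fix $I=\{i_1<\cdots<i_t\}$ and tail trees $A_{i_1},\ldots,A_{i_t}\in\C_M$. For each head coordinate $h\notin I$, define $A_h:T_M\to\Sigma\times[q]$ as follows. At the root,
\[
A_h(\varnothing)=\bigl((I,h),\,x\bigr),
\]
where $x\in[q]$ is a single numerical label, the same for all $h$, still to be chosen. At a non-root word $rw$ with $r\in[t]$ and $|w|\le M-1$,
\[
A_h(rw)=A_{i_r}(w).
\]
Then \ref{T1} holds by the choice of the root label, and \ref{T2} holds because every head tree uses the same $x$. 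For \ref{T3}, note that $A_h^r(w)=A_h(rw)=A_{i_r}(w)$ for every $w\in T_{M-1}$, so $A_h^r=A_{i_r}|_{M-1}$. This restriction is well defined because $A_{i_r}$ has depth $M$ and $A_h^r$ has depth $M-1$.

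It remains to check that $A_h\in\C_M$, that is, that \eqref{eq:path-distinct} holds.

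\begin{itemize}
\item Take a pair $u,w$ with $u$ a proper prefix of $w$ and both words non-empty. Then both begin with the same letter $r$, say $u=ru'$ and $w=rw'$, and $u'$ is a proper prefix of $w'$. Their labels are those of $u',w'$ in $A_{i_r}$, which differ because $A_{i_r}\in\C_M$.
\item Take a pair with $u=\varnothing$. We need $x\ne\nu_{A_h}(w)$ for every non-root $w$. These labels are exactly the numerical labels at depths $0,\ldots,M-1$ of the tail trees $A_{i_1},\ldots,A_{i_t}$. The tail trees are the same for every $h$, so one $x$ serves all head coordinates at once.
\end{itemize}

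The only real obstacle is choosing $x$. A naive count of the labels to avoid gives up to $t(1+t+\cdots+t^{M-1})=q-1$ values, which leaves room in $[q]$ but is tight. A cleaner route is to avoid only the labels on each root-to-leaf path. Every root-to-leaf path in $A_h$ consists of the root followed by a path in $T_{M-1}$ of a single tail tree, so it involves only $M$ labels besides the root. This path-wise condition is exactly what \eqref{eq:path-distinct} requires, but the root label $x$ must still avoid, simultaneously, every label occurring in the top $M$ levels of all $t$ tail trees. Those levels contain $t\cdot(1+\cdots+t^{M-1})=q-1<q$ vertices, so at most $q-1$ labels are forbidden and some $x\in[q]$ remains available.

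For $t=1$ the same count reads $M=q-1<q=M+1$, so the argument is uniform in $t\ge1$. This pigeonhole step is the one point that depends on the choice $q=q_{t,M}$.
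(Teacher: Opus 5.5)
Your proof is correct and matches the paper's argument: you choose one root label outside the at most $q-1$ numerical labels occurring in $A_{i_1}|_{M-1},\ldots,A_{i_t}|_{M-1}$, and attach these restrictions in order as the branches of every head tree. The aside about a ``cleaner'' path-wise route adds nothing, since the root is a proper prefix of every other vertex and so its label must avoid all $q-1$ of these labels anyway, exactly as you end up concluding.
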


\begin{proof}
Each tree $A_{i_r}|_{M-1}$ has $1+t+\cdots+t^{M-1}$ vertices. Hence the $t$ trees together use at most
\[
t(1+t+\cdots+t^{M-1})=t+t^2+\cdots+t^M=q-1
\]
different numerical labels. Choose $\gamma\in[q]$ that occurs in none of them. For every $h\notin I$, label a new root by $((I,h),\gamma)$ and attach $A_{i_1}|_{M-1},\ldots,A_{i_t}|_{M-1}$ below it as the first through $t$th branches. Call the resulting labelled tree $A_h$.

Every path contained in one branch satisfies \eqref{eq:path-distinct}, because the branch is a restriction of a labelled tree. The numerical label $\gamma$ at the new root occurs in none of the branches. Thus every path from the new root also satisfies \eqref{eq:path-distinct}, and $A_h\in\C_M$. The root pairs satisfy \ref{T1}, all root numerical labels equal $\gamma$ as required by \ref{T2}, and the attached branches satisfy \ref{T3}. Therefore the resulting tuple satisfies \ref{T1}--\ref{T3} for $I$.
\end{proof}

Define
\[
\Palt_{k,t,M}=(\C_M,\T_{k,t,M}),
\]
where a tuple is admissible if it satisfies \ref{T1}--\ref{T3} for at least one $I\in\binom{[k]}t$.

\begin{lemma}\label{lem:palette-degree}
The palette $\Palt_{k,t,M}$ satisfies
\[
\delta_t(\Palt_{k,t,M})\ge (b_{k,t}q)^{-(k-t)q}.
\]
\end{lemma}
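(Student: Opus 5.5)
Fix $I=\{i_1<\cdots<i_t\}\in\binom{[k]}t$ and an arbitrary assignment $\boldsymbol c=(A_{i})_{i\in I}\in\C_M^I$ of tail trees. The plan is to bound $|N_I(\boldsymbol c)|$ from below by the single completion provided by Lemma~\ref{lem:completion}, and to bound $|\C_M|$ from above trivially. By Lemma~\ref{lem:completion} there are head trees $A_h$, $h\notin I$, with $(A_1,\ldots,A_k)$ satisfying \ref{T1}--\ref{T3} for $I$, hence admissible in $\Palt_{k,t,M}$. Thus $N_I(\boldsymbol c)\ne\varnothing$, so $|N_I(\boldsymbol c)|\ge 1$. (For $t=0$ the claim was handled separately, so here $1\le t\le k-1$; the argument works uniformly for every $I$ of size $t$, and no merging or multiplicity count is required.)

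Next, each element of $\C_M$ is a labelling of $T_M$ by $\Sigma\times[q]$. Since $T_M$ has exactly $1+t+\cdots+t^M=q$ vertices and $|\Sigma|=b_{k,t}$, we get
\[
|\C_M|\le (b_{k,t}q)^{q}.
\]
Condition~\eqref{eq:path-distinct} only shrinks this set, which is harmless for an upper bound.

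Combining, for every $I$ and $\boldsymbol c$,
\[
\frac{|N_I(\boldsymbol c)|}{|\C_M|^{k-t}}\ge \frac{1}{(b_{k,t}q)^{(k-t)q}},
\]
and taking the minimum over $I$ and $\boldsymbol c$ in \eqref{eq:delta-t} gives the stated bound.

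There is essentially no obstacle. The only points to check are that Lemma~\ref{lem:completion} applies to all tail assignments in $\C_M$, which it does by its statement, and that the vertex count of $T_M$ equals $q=q_{t,M}$, which holds by definition. The work lies instead in the companion structural statement that every $G$ with $|\partial G|\le M$ admitting the palette is $t$-layered.
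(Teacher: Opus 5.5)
Your proposal is correct and matches the paper's proof: one completion from Lemma~\ref{lem:completion} gives $\delta_t(\Palt_{k,t,M})\ge |\C_M|^{-(k-t)}$. The trivial count of labellings of the $q$-vertex tree $T_M$ by $\Sigma\times[q]$ then gives $|\C_M|\le (b_{k,t}q)^{q}$.
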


\begin{proof}
Let $I\in\binom{[k]}t$ be arbitrary, and prescribe any labelled trees in the coordinates of $I$. By Lemma~\ref{lem:completion}, there is a choice of labelled trees in the other $k-t$ coordinates that completes them to an admissible tuple. Consequently,
\[
\delta_t(\Palt_{k,t,M})\ge\frac{1}{|\C_M|^{k-t}}.
\]
The tree $T_M$ has $q$ vertices, and each vertex has a label in $\Sigma\times[q]$. Hence the total number of such labellings is $(|\Sigma|q)^q=(b_{k,t}q)^q$. Since $\C_M$ consists only of the labellings satisfying \eqref{eq:path-distinct}, we have $|\C_M|\le(b_{k,t}q)^q$. Therefore
\[
\delta_t(\Palt_{k,t,M})\ge(b_{k,t}q)^{-(k-t)q},
\]
as required.
\end{proof}

We record two consequences of \ref{T1}--\ref{T3}. First, if $s\ge2$ and $(A_1,\ldots,A_k)$ satisfies these conditions for $I$, then $(A_1|_{s-1},\ldots,A_k|_{s-1})$ satisfies the same conditions at depth $s-1$. Indeed, restriction does not change the root pairs or numerical labels, and for every $h\notin I$ and $r\in[t]$,
\[
(A_h|_{s-1})^r=(A_h^r)|_{s-2}=A_{i_r}|_{s-2}.
\]
These are precisely the required branches.

Second, the head trees in a tuple satisfying \ref{T1}--\ref{T3} are pairwise distinct and none equals a tail tree. Distinct head trees have different root pairs by \ref{T1}. If $A_h=A_{i_r}$ for some $h\notin I$ and $r\in[t]$, then \ref{T3} yields $A_h^r=A_h|_{s-1}$. Evaluating at the roots gives $\nu_{A_h}(r)=\nu_{A_h}(\varnothing)$. Since $\varnothing$ is a proper prefix of $r$, this contradicts \eqref{eq:path-distinct}.

\begin{lemma}\label{lem:forcing}
Every $k$-graph $G$ with $|\partial G|\le M$ that admits $\Palt_{k,t,M}$ is $t$-layered.
\end{lemma}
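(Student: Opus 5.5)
Fix a witnessing order $\prec$ and coloring $\varphi:\partial G\to\C_M$. For each edge $e$, choose a set $I(e)$ for which $\boldsymbol\varphi(e)=(\varphi(p_1(e)),\ldots,\varphi(p_k(e)))$ satisfies \ref{T1}--\ref{T3}. The goal is to produce, with the same order and these tail sets, a partition $\Qcal$ of $\partial G$ satisfying \ref{Q1} and \ref{Q2} whose dependency digraph is acyclic. Lemma~\ref{lem:quotient} then shows that $G$ is $t$-layered. Following the outline, I run a merging process indexed by a depth $s$ from $M$ downward. At stage $s$ we have a partition $\Qcal_s$ of $\partial G$ and a map $\psi_s:\Qcal_s\to\C_s$ such that, for every edge, the tuple $(\psi_s(Q_s(p_i(e))))_i$ satisfies \ref{T1}--\ref{T3} for $I(e)$ at depth $s$. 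We start with $\Qcal_M$ given by the fibres of $\varphi$ and $\psi_M=\varphi$, so $|\Qcal_M|\le M$.

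\emph{Merging step.} Suppose \ref{Q2} fails for $\Qcal_s$. Then a class $X$ is the $h$-head of $e$ and the $h'$-head of $f$, but $(I(e),h,\boldsymbol Q(e))\ne(I(f),h',\boldsymbol Q(f))$. The root pair of $\psi_s(X)$ is $(I(e),h)$ by \ref{T1}, so $I(e)=I(f)$ and $h=h'$. By \ref{T3}, the branches of $\psi_s(X)$ equal the depth-$(s-1)$ restrictions of the tail trees, so the tail classes of $e$ and $f$ agree coordinatewise after restriction to depth $s-1$. By \ref{T2}, the head trees of $e$ and $f$ are also determined at depth $s-1$ by the common root label and these branches. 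Hence for every coordinate $i$ the classes $Q(p_i(e))$ and $Q(p_i(f))$ have equal $\psi_s|_{s-1}$-images. Now merge these classes coordinatewise, and take the transitive closure of all such merges over all violating pairs simultaneously. Set $\psi_{s-1}(Y)=\psi_s(X)|_{s-1}$ for any $X\subseteq Y$; this is well defined because only classes with equal restrictions are merged. By the first recorded consequence of \ref{T1}--\ref{T3}, all edge tuples still satisfy the conditions at depth $s-1$. Since the tuples differ, at least one genuine merge occurs, so $|\Qcal_{s-1}|<|\Qcal_s|$. The process therefore stops at some depth $s\ge M-|\partial G|+1\ge 1$, with \ref{Q2} holding and trees of depth at least $1$ still available. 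Condition \ref{Q1} then holds at every stage by the second recorded consequence, since the class images are distinct trees.

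\emph{Acyclicity.} Suppose there is a cycle of blocks $B_{e_1}\to B_{e_2}\to\cdots\to B_{e_m}\to B_{e_1}$. An arc $B_{e}\to B_f$ means some head class of $e$ is a tail class of $f$, say in position $i_r$ of $I(f)$. Then every head tree of $f$ contains the restriction of that tree as its $r$th branch. Hence the root label $\gamma_e$ of $e$'s head trees appears at depth $1$ below the root of each head tree of $f$, and more generally, following the arcs backwards, at depth $j$ after $j$ steps. Going around the cycle several times with $j\le s$ would place $\gamma_{e_1}$ both at the root and at a lower node on the same root-to-leaf path of a head tree of $e_1$, contradicting \eqref{eq:path-distinct}. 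This works provided the cycle length $m\le s$. The main obstacle is exactly this depth bookkeeping: the cycle may be longer than the remaining depth $s$. I would handle it by noting that the blocks of a cycle are distinct, so $m\le|\Qcal_s|\le |\partial G|-(M-s)\le s$. I would track this invariant, the number of classes at most $s$, explicitly throughout the induction, since $|\Qcal_M|\le M$ and each step lowers both quantities by at least one. One also needs that every class lies in a block, and that an arc from a singleton never-head block cannot lie on a cycle, which is immediate since such blocks have no incoming arcs.
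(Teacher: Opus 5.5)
Your proposal is correct and follows essentially the same route as the paper. Both arguments merge classes coordinatewise while restricting to depth $s-1$, maintain the invariant $|\Qcal|\le s$, get \ref{Q1} from the distinctness of head and tail trees, and rule out a cycle of length at most $|\Qcal|\le s$ by carrying a root label down a root-to-leaf path. Starting from the fibres of the colouring rather than singletons, and merging all violating pairs at once rather than one pair per step, are harmless differences, and a single traversal of the cycle already gives the contradiction.
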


\begin{proof}
If $G$ has no edge, the conclusion is immediate. Suppose that $G$ has an edge, and let $\prec$ and $\chi:\partial G\to\C_M$ witness that $G$ admits $\Palt_{k,t,M}$. For each edge $e$, choose a set $I(e)$ such that $(\chi(p_1(e)),\ldots,\chi(p_k(e)))$ satisfies \ref{T1}--\ref{T3} for $I(e)$. If more than one choice is possible, choose any one of them.

We now construct the required partition. Throughout the construction, we maintain a partition $\Qcal$ of $\partial G$, an integer $1\le s\le M$, and a map $\psi:\Qcal\to\C_s$ such that
\begin{equation}\label{eq:merge-invariant}
|\Qcal|\le s
\end{equation}
and, for every edge $e$, the tuple
\begin{equation}\label{eq:quotient-compatible}
\bigl(\psi(Q(p_1(e))),\ldots,\psi(Q(p_k(e)))\bigr)
\end{equation}
satisfies \ref{T1}--\ref{T3} for $I(e)$ at depth $s$. Initially, let $\Qcal$ be the partition into singleton classes, let $s=M$, and define $\psi(\{S\})=\chi(S)$ for every $S\in\partial G$. Since $|\partial G|\le M$, both requirements hold.

Suppose that \ref{Q2} fails for the current partition. Then there are edges $e,f$ and coordinates $h\notin I(e)$, $h'\notin I(f)$ with $Q(p_h(e))=Q(p_{h'}(f))$, but the conclusions of \ref{Q2} do not all hold. The common labelled tree has one root pair, so \ref{T1} gives $I(e)=I(f)=:I$ and $h=h'$. Therefore $\boldsymbol Q(e)\ne\boldsymbol Q(f)$. Every tuple satisfying \ref{T1}--\ref{T3} has at least one head tree and one tail tree, and these labelled trees are different by the second property above. Hence $|\Qcal|\ge2$, and \eqref{eq:merge-invariant} gives $s\ge2$.

Write $I=\{i_1<\cdots<i_t\}$. The common head tree has the same ordered branches in the two tuples. By \ref{T3},
\begin{equation}\label{eq:tail-restriction}
\psi(Q(p_{i_r}(e)))|_{s-1}
=
\psi(Q(p_{i_r}(f)))|_{s-1}
\quad \text{for } r\in[t].
\end{equation}
For every $g\notin I$, the two labelled trees $\psi(Q(p_g(e)))$ and $\psi(Q(p_g(f)))$ have the same root pair $(I,g)$. By \ref{T2}, their numerical root labels both equal the numerical root label of the common head tree. By \ref{T3} and \eqref{eq:tail-restriction}, they also have the same ordered branches. These two labelled trees are therefore equal. Together with \eqref{eq:tail-restriction}, this yields
\begin{equation}\label{eq:all-restrictions}
\psi(Q(p_j(e)))|_{s-1}
=
\psi(Q(p_j(f)))|_{s-1}
\quad \text{for } j\in[k].
\end{equation}

Let $\sim$ be the equivalence relation on the classes of $\Qcal$ generated by $Q(p_j(e))\sim Q(p_j(f))$ for $j\in[k]$. Let $\Qcal'$ be the partition obtained by taking the unions of the $\sim$-equivalence classes. Equivalently, $\Qcal'$ is the finest coarsening of $\Qcal$ in which all these identifications hold.

By \eqref{eq:all-restrictions}, every generating identification joins two old classes whose labelled trees have the same restriction to $T_{s-1}$. By transitivity, all old classes contained in one class of $\Qcal'$ have the same restriction. We may therefore define $\psi'(X')=\psi(X)|_{s-1}$, where $X$ is any old class contained in $X'$. This definition is well defined.

For every edge, restriction preserves \ref{T1}--\ref{T3}, so its new tuple satisfies these conditions for $I(e)$ at depth $s-1$. Moreover, $\boldsymbol Q(e)\ne\boldsymbol Q(f)$, so at least one pair of distinct old classes has been identified. Hence $|\Qcal'|\le|\Qcal|-1\le s-1$. Thus both requirements remain valid after replacing $(\Qcal,s,\psi)$ by $(\Qcal',s-1,\psi')$.

Each merging step strictly decreases the number of classes. Hence the process terminates after finitely many steps. For the terminal partition, \ref{Q2} holds. The second consequence of \ref{T1}--\ref{T3} shows that the head trees of every edge are pairwise distinct and differ from all tail trees. The corresponding classes are therefore different as well, so \ref{Q1} also holds.

It remains to show that the dependency digraph is acyclic. All labelled trees assigned to classes in one head block have the same numerical label at every vertex of their trees: their numerical root labels agree by \ref{T2}, and their ordered branches agree by \ref{T3}. Only the pairs recorded at their roots may differ. The dependency digraph has no loop, since a loop would place a tail class in the head block of the same tuple, contrary to \ref{Q1}.

Suppose for a contradiction that this digraph contains a directed cycle of minimum length,
\[
B_0\longrightarrow B_1\longrightarrow\cdots\longrightarrow B_\ell=B_0.
\]
Then $2\le\ell\le|\Qcal|\le s$. Every block on the cycle has an incoming arc and is therefore a head block. Choose a class $Z_j\in B_j$ for $0\le j<\ell$, and let $Z_\ell=Z_0$. For each $0\le j<\ell$, the arc $B_j\to B_{j+1}$ comes from an edge whose head block is $B_{j+1}$ and whose $r_j$th tail class, say $X_j$, belongs to $B_j$, for some $r_j\in[t]$. Condition~\ref{T3} and the agreement of numerical labels within $B_j$ imply
\begin{equation}\label{eq:cycle-transfer}
\nu_{\psi(Z_{j+1})}(r_jw)
=
\nu_{\psi(X_j)}(w)
=
\nu_{\psi(Z_j)}(w)
\quad  \text{for } w\in T_{s-1}.
\end{equation}

Let $w_0=\varnothing$ and $\alpha=\nu_{\psi(Z_0)}(\varnothing)$. We prove inductively that, for every $0\le j\le\ell$, there is a word $w_j$ of length $j$ over $[t]$ such that $\nu_{\psi(Z_j)}(w_j)=\alpha$. This is true for $j=0$. Suppose that it holds for some $j<\ell$. Since $j\le\ell-1\le s-1$, the word $w_j$ belongs to $T_{s-1}$. Define $w_{j+1}=r_jw_j$. Using \eqref{eq:cycle-transfer}, we obtain
\[
\nu_{\psi(Z_{j+1})}(w_{j+1})
=\nu_{\psi(Z_j)}(w_j)=\alpha.
\]
This proves the assertion.

Finally, since $Z_\ell=Z_0$, the numerical label $\alpha$ occurs in $\psi(Z_0)$ both at the root $\varnothing$ and at the vertex $w_\ell$ of depth $\ell\ge1$. Since $\varnothing$ is a proper prefix of $w_\ell$, this contradicts \eqref{eq:path-distinct}. Therefore the dependency digraph is acyclic. The partition $\Qcal$ satisfies all conditions in Lemma~\ref{lem:quotient}, which implies that $G$ is $t$-layered.
\end{proof}

\begin{proof}[\bfseries{Proof of Theorem~\ref{thm:local-intro}}]
For $t=0$, the one-tuple palette defined at the start of this section has the required properties. Suppose that $1\le t\le k-1$, and use $\Palt_{k,t,M}$. By Lemma~\ref{lem:palette-degree},
\[
\delta_t(\Palt_{k,t,M})\ge(b_{k,t}q_{t,M})^{-(k-t)q_{t,M}}.
\]
By Lemma~\ref{lem:forcing}, every $k$-graph $G$ with $|\partial G|\le M$ that admits this palette is $t$-layered. This completes the proof.
\end{proof}

\begin{proof}[\bfseries{Proof of Theorem~\ref{thm:main-intro}}]
Suppose first that $F$ is $t$-layered. By Lemma~\ref{lem:greedy}, $F$ admits every palette with positive $t$th coordinate-extension degree. Hence every palette not admitted by $F$ has $t$th coordinate-extension degree zero, and $\pi_t^{\ext}(F)=0$.

Conversely, let $F$ be a $k$-graph that is not $t$-layered, and let $M=|\partial F|$. Since an edgeless $k$-graph is $t$-layered, $M\ge1$. By Theorem~\ref{thm:local-intro}, $F$ does not admit $\Palt_{k,t,M}$; otherwise, it would be $t$-layered. Therefore, by the definition of $\pi_t^{\ext}(F)$,
\[
\pi_t^{\ext}(F)
\ge\delta_t(\Palt_{k,t,M})
\ge(b_{k,t}q_{t,M})^{-(k-t)q_{t,M}}>0.
\]
This proves both the equivalence and the stated lower bound.
\end{proof}

\section{Relation between \texorpdfstring{$t$}{t}-layeredness and known concepts}\label{sec:relation-layered}

We restate Proposition~\ref{prop:endpoints} before proving it.
\begin{proposition}
Let $F$ be a $k$-graph.
\begin{enumerate}[label=\textup{(\roman*)}]
\item $F$ is $0$-layered if and only if it has a $(k-1)$-vanishing order.
\item $F$ is $1$-layered if and only if it is min-layered.
\item $F$ is $(k-1)$-layered if and only if it is max-layered.
\end{enumerate}
\end{proposition}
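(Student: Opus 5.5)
We prove the three equivalences directly from Definition~\ref{def:t-layered}, keeping the vertex order fixed throughout. In each case one direction takes the given labelling $\varphi$ and chooses tail sets. The other direction starts from a $t$-layered witness and relabels the values by an injective map $\psi$ that encodes when each value first appears. Since $\psi$ is injective, it preserves equality and distinctness of vectors and of shared entries, so only the order relations among entries matter.

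For (i), take $t=0$. Then $I_s=\varnothing$, and \ref{D1}--\ref{D2} say that every vector has pairwise distinct entries and that distinct vectors share no value. Given a vanishing order with type map $\iota$, put $\varphi=\iota$. Every edge then has the vector $(1,\ldots,k)$, so $r=1$ and the conditions hold. Conversely, let $F$ be $0$-layered and let $S$ be a facet. Its label $\varphi(S)$ lies in exactly one vector, and in exactly one coordinate $i$ of it. Every edge containing $S$ has that vector and contains $S$ in coordinate $i$. Hence the type of $S$ is well defined, and the order is vanishing.

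For (ii), take $t=1$. Given a min-layered labelling, we choose as tail the coordinate of the minimum entry and order the vectors by increasing minimum. Distinct vectors with equal minimum can share only that minimum: any other shared value would be the minimum of one of them, which is impossible. So ties may be broken arbitrarily. \ref{D1} holds, and the heads differ from the tail, because the entries are distinct. For \ref{D2}, suppose a head value $h$ of a later vector $\boldsymbol a$ occurs in an earlier vector $\boldsymbol b$. Since $h$ is not the minimum of $\boldsymbol a$, it must equal $\min\boldsymbol b$. This gives $h=\min\boldsymbol b\le\min\boldsymbol a<h$, a contradiction.

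Conversely, let $F$ be $1$-layered. Each vector has distinct entries, since its heads are distinct and differ from its single tail. A value shared by $\boldsymbol a^u$ and $\boldsymbol a^s$ with $u<s$ is, by \ref{D2}, the tail of $\boldsymbol a^s$. It remains to make each tail the minimum of its vector. For a value $x$, let $s(x)$ be the index of the first vector containing $x$. Define $\psi(x)=3K\,s(x)+\epsilon(x)$, where $\epsilon(x)\in[0,K)$ if $x$ is the tail of $\boldsymbol a^{s(x)}$ and $\epsilon(x)\in[K,2K)$ otherwise. The offsets are chosen distinct, with $K$ large, so that $\psi$ is injective. In vector $s$, each head $x$ has $s(x)=s$ and the larger offset range. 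The tail $y$ has $s(y)<s$ or lies in the smaller range. Hence the tail is the minimum.

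For (iii), take $t=k-1$, so each vector has a single head. Given a max-layered labelling, let the head of each vector be the coordinate of its unique maximum, and order the vectors by increasing maximum. \ref{D2} holds because every entry of an earlier vector is at most that vector's maximum, which is smaller than the current maximum. Conversely, given a $(k-1)$-layered witness, each value is the head of at most one vector, and the head $x$ of $\boldsymbol a^s$ has $s(x)=s$. Put $\psi(x)=2K\,s(x)+K\,[x\text{ is the head of }\boldsymbol a^{s(x)}]+\epsilon(x)$ with small distinct offsets $\epsilon(x)\in[0,K)$. The head of $\boldsymbol a^s$ then strictly exceeds every tail of $\boldsymbol a^s$: each tail $y$ satisfies $s(y)\le s$ and is not a head first appearing at $s$. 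So the maximum is unique. Distinct vectors have distinct heads, hence distinct maxima.

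The main obstacle is this relabelling bookkeeping in (ii) and (iii). One must check that $s(x)$ and the head/tail status at first appearance are well defined, which uses that a head never appears earlier. One must also check that equal vectors remain equal, which uses the injectivity of $\psi$.
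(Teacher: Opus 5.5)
Your proof is correct and follows the paper's argument: part (i) and the forward directions of (ii) and (iii) match the paper exactly (tail at the minimum with vectors in non-decreasing order of minimum; head at the unique maximum with vectors in increasing order of maximum). For the converses of (ii) and (iii), the paper builds a digraph on labels with arcs from tail values to head values, uses (D2) to show it is acyclic, and relabels along a topological order; your relabelling $\psi$ by first-appearance index and head/tail status is a concrete such order written down directly, which lets you skip the separate acyclicity argument.
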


\begin{proof}[Proof]
For (i), suppose first that $F$ is $0$-layered. Since there are no tail coordinates, \ref{D1} and \ref{D2} show that the entries in each vector are pairwise distinct and that two distinct vectors have disjoint sets of entries. If one facet belongs to two edges, its label occurs in both vectors. The two vectors must therefore be equal, and the facet has the same coordinate in both edges. Thus the witnessing vertex order is vanishing. Conversely, under a vanishing order, label each facet by its well-defined coordinate. Every edge then has the vector $(1,2,\ldots,k)$, which satisfies \ref{D1} and \ref{D2} with $t=0$.

For (ii), suppose first that $F$ is min-layered, and let $\mathcal A$ be the corresponding set of distinct vectors. Order its vectors as $\boldsymbol a^1,\ldots,\boldsymbol a^r$ so that their minimum entries are non-decreasing. Every vector has a unique minimum. For each $s$, let $I_s$ contain the coordinate of this minimum. The entries outside $I_s$ are pairwise distinct and differ from the tail entry. It is enough to show that no head entry occurs in an earlier vector. Suppose for a contradiction that a head value $a_h^s$, with $h\notin I_s$, occurs in $\boldsymbol a^u$ for some $u<s$. Since $a_h^s$ is not the minimum of $\boldsymbol a^s$, the definition of min-layeredness implies that it is the minimum of $\boldsymbol a^u$. Hence
\[
a_h^s=\min_i a_i^u\le\min_i a_i^s,
\]
which contradicts $a_h^s>\min_i a_i^s$. Thus this vector order satisfies \ref{D1} and \ref{D2} with $t=1$.

Conversely, suppose that $F$ is $1$-layered, and choose a vector order $\boldsymbol a^1,\ldots,\boldsymbol a^r$ satisfying \ref{D1} and \ref{D2} with $t=1$. Construct a digraph whose vertices are the labels occurring in these vectors. For each vector, draw an arc from its tail value to every head value. A label can be a head in at most one vector, by \ref{D2}. If a label $x$ is a head in vector $s(x)$ and $x\to y$ is an arc, then $x$ occurs as the tail in vector $s(y)$, and \ref{D2} gives $s(x)<s(y)$. Every vertex on a directed cycle would be a head, so these strict inequalities show that the digraph is acyclic. Relabel its vertices by distinct positive integers in a topological order. In every vector, the tail label is then smaller than all head labels and is its unique minimum. If two distinct vectors share a label, that label is a head in at most one of them and is therefore the tail, hence the minimum, in the other. The relabelled vectors therefore show that $F$ is min-layered.

For (iii), suppose first that $F$ is max-layered. Order the distinct vectors by increasing maximum value, and in each vector take the coordinate of its unique maximum as the head coordinate. The head value is different from every tail value. It has not occurred in an earlier vector, because every entry in an earlier vector is at most its smaller maximum. This vector order satisfies \ref{D1} and \ref{D2} with $t=k-1$.

Conversely, suppose that the distinct vectors satisfy \ref{D1} and \ref{D2} with $t=k-1$. Each vector has one head. Draw an arc from every tail value to the head value of its vector. A head label occurs as a head in exactly one vector. If a tail label is the head of another vector, that other vector occurs earlier by \ref{D2}. Hence, along every arc whose tail is also a head, the index of the head vector strictly increases. As above, the digraph is acyclic. Relabel all values by distinct positive integers in a topological order. The head is then the unique maximum of its vector, and distinct vectors have distinct head values by \ref{D2}. Thus the relabelled vectors satisfy the definition of max-layeredness. This completes the proof.
\end{proof}

\begin{proof}[\bfseries{Proof of Theorem \ref{thm:layered}}]
Choose a map $f:V(F)\to\N$ that satisfies the definition of layeredness and \eqref{eq:strong-layered}. Order the vertices by non-decreasing $f$-value, breaking ties arbitrarily. Let $\mathcal B=\{p_k(e):e\in E(F)\}$; these are the facets obtained by deleting the unique maximum vertex of an edge.

We claim that each edge $e$ contains exactly one member of $\mathcal B$, namely $p_k(e)$. Suppose that $S\in\mathcal B$ is contained in $e$. By definition, there is an edge $S\cup\{z\}$ with $f(z)>f(x)$ for every $x\in S$. Write $e=S\cup\{w\}$. The two edges share the $k-1$ labels on $S$, counted with multiplicity, so \eqref{eq:strong-layered} makes their full label multisets equal. Removing the labels on $S$ gives $f(w)=f(z)$. Hence $w$ is the unique maximum vertex of $e$, and $S=p_k(e)$. This proves the claim.

Assign distinct integers greater than $1$ to the members of $\mathcal B$ and label every other facet by $1$. Each edge then has a vector of the form $(1,\ldots,1,L)$ with $L>1$. Its maximum is unique, and distinct vectors have distinct maximum values. Thus $F$ is max-layered. Proposition~\ref{prop:endpoints}(iii) implies that it is also $(k-1)$-layered.
\end{proof}

The Fano plane shows that the converse fails.

\begin{proposition}\label{prop:fano-max-not-layered}
The Fano plane $\mathbb{F}$ is max-layered but not layered.
\end{proposition}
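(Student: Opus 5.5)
The proof has two parts: a direct labelling for max-layeredness, and a short counting argument that no labelling can make $\mathbb F$ layered. Both use only that $\mathbb F$ is a Steiner triple system on $7$ points: every pair of vertices lies in exactly one of its $7$ lines.

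For max-layeredness, take any vertex order. Each facet of a $3$-edge is a pair, and every pair lies in exactly one line. So the facets of distinct edges are distinct, and the $21$ facets split into $7$ triples, one per line. Enumerate the lines $e_1,\ldots,e_7$ and give the three facets of $e_j$ the labels $3j-2$, $3j-1$, $3j$ in any arrangement. Then every vector has pairwise distinct entries, hence a unique maximum $3j$, and distinct vectors have distinct maxima. This is exactly the definition of max-layered. No compatibility issues arise, because no facet is shared by two edges.

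For non-layeredness, suppose $f:V(\mathbb F)\to\N$ witnesses layeredness and let $m=\max f$.
\begin{enumerate}
\item Let $U=f^{-1}(m)$. An edge containing two vertices of $U$ would have no unique maximum. Since any two vertices lie on a line, this forces $|U|=1$, say $U=\{u\}$.
\item The three lines through $u$ all have maximum value $m$, so they have the same label multiset. Removing $u$, the six other vertices split into three pairs $P_1,P_2,P_3$ (the lines through $u$ minus $u$), each carrying the same label multiset $\{a,b\}$ with $a\le b$.
\item The remaining four lines avoid $u$. Each meets every line through $u$ in exactly one point, so it is a transversal of $P_1,P_2,P_3$, choosing one vertex from each pair.
\end{enumerate}

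If $a=b$, every such line has all three labels equal to $a$ and no unique maximum, which is a contradiction.

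If $a<b$, each $P_i$ has exactly one vertex labelled $b$. Call $T$ the transversal of $b$-vertices and $T'$ the complementary transversal of $a$-vertices. There are $8$ transversals, forming $4$ complementary pairs. Two complementary transversals are disjoint, but two lines of $\mathbb F$ always meet, so at most one member of each complementary pair is a line. Since $4$ lines are transversals, exactly one member of each pair is a line. In particular $T$ or $T'$ is a line, and that line has all labels equal ($b$ or $a$). It therefore has no unique maximum, which is again a contradiction. Hence $\mathbb F$ is not layered.

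The only step needing care is showing that one of $T,T'$ must be a line. The pairing count above avoids coordinatizing $\mathbb F$ over $\mathbb F_2^3$, though that parity description would also work.
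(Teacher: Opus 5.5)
Your proof is correct and follows the paper's argument. For max-layeredness, linearity makes the labelling free, and your all-distinct labels $3j-2,3j-1,3j$ work as well as the paper's $(1,1,L_e)$. For non-layeredness you use the same unique top vertex $u$, the same three pairs carrying a common multiset $\{a,b\}$, and the same dismissal of $a=b$.

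The one real difference is the case $a<b$. The paper double counts incidences of the four lines avoiding $u$ with the $b$-labelled vertices, getting $4$ from the unique-maximum condition versus $2\cdot 3=6$ from the point degrees. Your complementary-transversal pigeonhole uses only that any two lines meet, and shows directly that one of these lines is monochromatic. Both arguments are valid and equally short.
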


\begin{proof}
First we show that $\mathbb{F}$ is max-layered. Since $\mathbb{F}$ is a linear $3$-graph, two distinct edges share no pair. For every edge $e\in E(\mathbb{F})$, choose a pair $S_e\in\binom{e}{2}$. The pairs $S_e$ are pairwise distinct, and each $S_e$ is contained in no edge other than $e$. Choose pairwise distinct integers $L_e>1$ for $e\in E(\mathbb{F})$, and define $\varphi:\partial\mathbb{F}\to\mathbb{N}$ by
\[
\varphi(S)=
\begin{cases}
L_e,&\text{if }S=S_e\text{ for some }e\in E(\mathbb{F}),\\
1,&\text{otherwise}.
\end{cases}
\]
Fix any linear order $\prec$ on $V(\mathbb{F})$. For an edge $e=\{v_1,v_2,v_3\}$ with $v_1\prec v_2\prec v_3$, exactly one of its three pairs is $S_e$, and no other chosen pair $S_f$ with $f\ne e$ is contained in $e$. Hence the associated vector of $e$ is a coordinate permutation of $(1,1,L_e)$. It has the unique maximum value $L_e$, and distinct edges have distinct maximum values. Thus $\mathbb{F}$ is max-layered.

We next show that $\mathbb{F}$ is not layered. Suppose for a contradiction that a map $f:V(\mathbb{F})\to\mathbb{N}$ witnesses that $\mathbb{F}$ is layered. Let $M=\max\{f(v):v\in V(\mathbb{F})\}$. Since every two vertices of $\mathbb{F}$ lie together in an edge, exactly one vertex has value $M$; otherwise, an edge containing two vertices of value $M$ would have no unique vertex of maximum value.

After relabelling the vertices, we may suppose that $f(1)=M$ and
\[
E(\mathbb{F})=\{123,145,167,246,257,347,356\},
\]
where $ijk$ denotes the edge $\{i,j,k\}$. The three edges $123$, $145$, and $167$ have the same maximum value $M$. Since $f$ witnesses that $\mathbb{F}$ is layered, these three edges have the same label multiset. Removing the common entry $M$, we conclude that the three multisets $\{f(2),f(3)\}$, $\{f(4),f(5)\}$, and $\{f(6),f(7)\}$ are equal. Hence there are integers $a,b<M$ such that each of the pairs $\{2,3\}$, $\{4,5\}$, and $\{6,7\}$ receives the label multiset $\{a,b\}$.

If $a=b$, then every vertex in $\{2,\ldots,7\}$ has label $a$. In particular, the edge $246$ has no unique vertex of maximum value, a contradiction. We may therefore suppose that $a<b$.

Let $B=\{v\in\{2,\ldots,7\}:f(v)=b\}$. Each of the pairs $\{2,3\}$, $\{4,5\}$, and $\{6,7\}$ contains exactly one vertex of $B$, so $|B|=3$. Consider the four edges not containing $1$, namely
\[
\mathcal E_0=\{246,257,347,356\}.
\]
Every vertex of an edge in $\mathcal E_0$ has label either $a$ or $b$. Since $b>a$, such an edge has a unique vertex of maximum value if and only if it contains exactly one vertex of $B$. Thus $\sum_{e\in\mathcal E_0}|e\cap B|=4$. On the other hand, every vertex in $\{2,\ldots,7\}$ belongs to exactly two edges of $\mathcal E_0$, and therefore $\sum_{e\in\mathcal E_0}|e\cap B|=2|B|=6$, a contradiction.

Hence $\mathbb{F}$ is not layered. This completes the proof.
\end{proof}

\section{Symmetrization and exact small densities}\label{sec:exact}

For a $k$-palette $\Palt=(\C,\T)$ and $p<q$, its \emph{$(p,q)$-coordinate digraph} has vertex set $\C$ and an arc $xy$ whenever some tuple in $\T$ has $x$ in coordinate $p$ and $y$ in coordinate $q$; a loop is allowed. A \emph{digraph homomorphism} from $D$ to $D'$ is a map $g:V(D)\to V(D')$ such that $g(x)g(y)$ is an arc of $D'$ whenever $xy$ is an arc of $D$.

We first record an elementary consequence of the definition of symmetrization.

\begin{lemma}\label{lem:unique-placement}
Let $p,q\in[k]$ and $\sigma\in\Sym_{k-1}$. There is exactly one permutation $\tau\in\Sym_k$ such that
\[
\tau(p)=q
\quad\text{and}\quad
\partial_p\tau=\sigma.
\]
\end{lemma}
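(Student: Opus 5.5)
The plan is to construct $\tau$ explicitly and show that it is forced. Recall that $\partial_p\tau\in\Sym_{k-1}$ is the relative order of the sequence $\tau(1),\ldots,\tau(p-1),\tau(p+1),\ldots,\tau(k)$. These are the $k-1$ values of $[k]\setminus\{\tau(p)\}$, listed in the positions $[k]\setminus\{p\}$. So fixing $\tau(p)=q$ fixes the set of remaining values, namely $[k]\setminus\{q\}$, and $\sigma$ then prescribes how these values are arranged among the remaining positions.

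To make this precise, let $\alpha:[k-1]\to[k]\setminus\{p\}$ be the increasing bijection of positions, so that $\alpha(u)=u$ for $u<p$ and $\alpha(u)=u+1$ for $u\ge p$. Similarly, let $\beta:[k-1]\to[k]\setminus\{q\}$ be the increasing bijection onto values. For any $\tau$ with $\tau(p)=q$, the map $\beta^{-1}\circ\tau\circ\alpha$ is a permutation of $[k-1]$. Because $\beta$ is increasing, this permutation is exactly the relative order of $(\tau(\alpha(1)),\ldots,\tau(\alpha(k-1)))$.

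Indeed, for distinct integers $a_1,\ldots,a_{k-1}$ taking values in a $(k-1)$-set $Y$, the relative order is the unique $\sigma$ with $\sigma(u)=\beta_Y^{-1}(a_u)$, where $\beta_Y$ is the increasing bijection $[k-1]\to Y$. This is just the rank of $a_u$ in $Y$, and it is immediate from the definition of relative order, since ranks preserve comparisons and a permutation of $[k-1]$ is determined by its order pattern. Hence $\partial_p\tau=\beta^{-1}\circ\tau\circ\alpha$.

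Existence then follows directly. Define $\tau(p)=q$ and $\tau(\alpha(u))=\beta(\sigma(u))$ for $u\in[k-1]$. This $\tau$ is a bijection $[k]\to[k]$, because $\beta\circ\sigma\circ\alpha^{-1}$ maps $[k]\setminus\{p\}$ bijectively onto $[k]\setminus\{q\}$. By the identity just established, $\partial_p\tau=\sigma$.

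Uniqueness follows from the same identity. If $\tau'$ is another permutation with $\tau'(p)=q$ and $\partial_p\tau'=\sigma$, then $\tau'\circ\alpha=\beta\circ\sigma=\tau\circ\alpha$, and also $\tau'(p)=\tau(p)$. Hence $\tau'=\tau$.

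There is no real obstacle here. The only point needing care is the identity $\partial_p\tau=\beta^{-1}\circ\tau\circ\alpha$, that is, the rank description of relative order; everything else is bookkeeping.
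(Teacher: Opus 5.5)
Your proof is correct and follows essentially the same route as the paper: your increasing bijections $\alpha$ and $\beta$ are the paper's enumerations $j_1<\cdots<j_{k-1}$ of $[k]\setminus\{p\}$ and $b_1<\cdots<b_{k-1}$ of $[k]\setminus\{q\}$, and both arguments define $\tau(j_u)=b_{\sigma(u)}$ and get uniqueness from the fact that relative order is the rank function. The only difference is that you state the identity $\partial_p\tau=\beta^{-1}\circ\tau\circ\alpha$ explicitly, where the paper uses it implicitly.
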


\begin{proof}
For convenience, write $[k]\setminus\{p\}=\{j_1<\cdots<j_{k-1}\}$ and $[k]\setminus\{q\}=\{b_1<\cdots<b_{k-1}\}$. Define $\tau\in\Sym_k$ by
\[
\tau(p)=q 
\quad \text{and} \quad
\tau(j_u)=b_{\sigma(u)} \text{ for } 1\le u\le k-1.
\]
Since $\sigma$ is a permutation of $[k-1]$, the values $b_{\sigma(1)},\ldots,b_{\sigma(k-1)}$ are precisely the elements of $[k]\setminus\{q\}$. Hence $\tau$ is a permutation of $[k]$. After deleting the entry $\tau(p)=q$, the remaining sequence is $(b_{\sigma(1)},\ldots,b_{\sigma(k-1)})$, whose relative order is $\sigma$. Thus $\partial_p\tau=\sigma$.

To prove uniqueness, let $\tau'\in\Sym_k$ satisfy $\tau'(p)=q$ and $\partial_p\tau'=\sigma$. The entries outside position $p$ are $b_1,\ldots,b_{k-1}$, and their relative order is $\sigma$. Consequently, $\tau'(j_u)=b_{\sigma(u)}$ for every $u\in[k-1]$, so $\tau'=\tau$.
\end{proof}

The following lemma extends the cyclic shift of the endpoint coordinates to any interval of coordinates.

\begin{lemma}[Coordinate-cycle lemma]\label{lem:cycle}
Let $\Palt$ be a $k$-palette containing an admissible tuple with pairwise distinct entries, and let $1\le p<q\le k$. In $\Palt^{\Sym}$ there are distinct colors $u_0,\ldots,u_{q-p}$ and admissible tuples whose colors in coordinates $p,q$ are
\[
(u_0,u_1),(u_1,u_2),\ldots,(u_{q-p},u_0).
\]
Thus the coordinate digraph in positions $p,q$ contains a directed cycle of length $q-p+1$.
\end{lemma}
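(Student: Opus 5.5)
The plan is to apply the permutation action in the definition of $\Palt^{\Sym}$ to one fixed tuple, stepping repeatedly by a single cyclic permutation of the interval $\{p,\ldots,q\}$. The aim is that, at each step, the colour in coordinate $q$ of the current tuple reappears in coordinate $p$ of the next one.

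Fix an admissible tuple $\boldsymbol c=(c_1,\ldots,c_k)\in\T$ with pairwise distinct entries, and set $m=q-p+1$. Define $\gamma\in\Sym_k$ by
\[
\gamma(p)=q,\qquad \gamma(i)=i-1\ (p<i\le q),\qquad \gamma(i)=i\ (i<p\text{ or }i>q).
\]
This $\gamma$ is a single $m$-cycle on $\{p,\ldots,q\}$, so $\gamma^m=\mathrm{id}$. Put $\tau_j=\gamma^j$ for $0\le j\le m-1$. Each tuple $\tau_j\cdot\boldsymbol c$ is admissible in $\Palt^{\Sym}$ by definition of the symmetrization. Its coordinate-$p$ colour is
\[
u_j=c_{\tau_j(p)}^{\partial_p\tau_j},
\]
and its coordinate-$q$ colour is $c_{\tau_j(q)}^{\partial_q\tau_j}$.

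The key step is the identity $c_{\tau_j(q)}^{\partial_q\tau_j}=u_{j+1}$, with indices taken modulo $m$. We check the two parts of the colour separately.
\begin{itemize}
\item \emph{Base colour.} We have $\tau_{j+1}(p)=\tau_j(\gamma(p))=\tau_j(q)$.
\item \emph{Superscript.} The permutation $\partial_p\tau_{j+1}$ is the relative order of the sequence $\tau_j(\gamma(i))$, $i\ne p$, read in increasing $i$. For $i<p$ this gives $\tau_j(i)$. For $i=p+1,\ldots,q$ it gives $\tau_j(p),\ldots,\tau_j(q-1)$. For $i>q$ it gives $\tau_j(i)$. This is exactly the sequence $\tau_1,\ldots,\tau_k$ of $\tau_j$ with position $q$ deleted, so $\partial_p\tau_{j+1}=\partial_q\tau_j$.
\end{itemize}
Because $\gamma^m=\mathrm{id}$, the case $j=m-1$ wraps around to $\tau_0$. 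Hence the pairs of colours in coordinates $(p,q)$ are $(u_0,u_1),\ldots,(u_{m-1},u_0)$.

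For distinctness, the base colours of the $u_j$ are $c_{\gamma^j(p)}$. The positions $\gamma^j(p)$, $0\le j<m$, form the orbit of $p$ under the $m$-cycle $\gamma$, so they are pairwise distinct. Since the entries of $\boldsymbol c$ are pairwise distinct, the colours $u_0,\ldots,u_{m-1}$ are distinct. These arcs then give a directed cycle of length $q-p+1$ in the $(p,q)$-coordinate digraph. It is a genuine cycle because $m\ge2$.

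The only delicate point is the superscript identity $\partial_p\tau_{j+1}=\partial_q\tau_j$. This is what forces the choice of $\gamma$ as the cycle sending $p\mapsto q$ and shifting $p+1,\ldots,q$ down by one. A transposition of $p$ and $q$ would generally scramble the relative order, so I would verify this identity carefully. Lemma~\ref{lem:unique-placement} could alternatively be invoked to see that $\tau_{j+1}$ is forced, but it is not needed.
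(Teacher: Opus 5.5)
Your proof is correct and is essentially the paper's argument. Your step $\tau_j\mapsto\tau_j\circ\gamma$ is exactly the paper's map $\Phi$: move the entry in position $q$ to position $p$ and shift positions $p,\ldots,q-1$ one place right. You use the same identities $\Phi(\tau)(p)=\tau(q)$ and $\partial_p\Phi(\tau)=\partial_q\tau$, and the same distinctness argument via the $d$ distinct base colours. The only difference is that you start the iteration at $\tau_0=\mathrm{id}$, whereas the paper allows an arbitrary starting permutation.
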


\begin{proof}
Fix an admissible tuple with pairwise distinct entries $\boldsymbol c=(c_1,\ldots,c_k)$ of $\Palt$. For $\tau\in\Sym_k$, define $\Phi(\tau)$ by moving the entry in position $q$ to position $p$ and moving the entries in positions $p,\ldots,q-1$ one place to the right. Thus, in one-line form,
\[
\Phi(\tau)=\bigl(\tau(1),\ldots,\tau(p-1),\tau(q),\tau(p),\ldots,
\tau(q-1),\tau(q+1),\ldots,\tau(k)\bigr).
\]
Deleting the entry in position $q$ from $\tau$ and deleting the entry in position $p$ from $\Phi(\tau)$ produce the same ordered $(k-1)$-tuple. Hence
\begin{equation}\label{eq:partial-shift}
\Phi(\tau)(p)=\tau(q)
\quad \text{and} \quad
\partial_p\Phi(\tau)=\partial_q\tau.
\end{equation}
The color in coordinate $q$ of $\tau\cdot\boldsymbol c$ is therefore
\[
c_{\tau(q)}^{\partial_q\tau}
=c_{\Phi(\tau)(p)}^{\partial_p\Phi(\tau)},
\]
which is exactly the color in coordinate $p$ of $\Phi(\tau)\cdot\boldsymbol c$.

Choose any $\tau_0\in\Sym_k$ and let $\tau_s=\Phi^s(\tau_0)$ for $s\ge0$. Write $d=q-p+1$. The operation $\Phi$ cyclically shifts the $d$ entries in positions $p,\ldots,q$, so $\tau_d=\tau_0$. For $0\le s<d$, let $u_s$ be the color in coordinate $p$ of $\tau_s\cdot\boldsymbol c$. By \eqref{eq:partial-shift}, the colors in coordinates $p,q$ of this tuple are $(u_s,u_{s+1})$, with indices taken modulo $d$.

The underlying colors of $u_0,\ldots,u_{d-1}$ are precisely the $d$ colors $c_{\tau_0(p)},\ldots,c_{\tau_0(q)}$, in some order. Since the entries of $\boldsymbol c$ are pairwise distinct, these colors are pairwise distinct. Hence the formal colors $u_0,\ldots,u_{d-1}$ are also pairwise distinct. The chosen tuples therefore witness a directed cycle of length $d$ in the required coordinate digraph.
\end{proof}

We use two consequences. The first uses a repeated color to obtain a switch palette.

\begin{lemma}\label{lem:repeated-to-sym}
Let $\mathcal A$ be a $k$-palette. Suppose that there are admissible tuples $\boldsymbol u,\boldsymbol v$ and coordinates $a,b\in[k]$ such that $u_a=v_b=x$ and $(\boldsymbol u,a)\ne(\boldsymbol v,b)$. Then, for every $p<q$,
\[
\mathcal R_{p,q}\longrightarrow\mathcal A^{\Sym}.
\]
\end{lemma}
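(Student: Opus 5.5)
The plan is to write down an explicit palette homomorphism. The image of $\boldsymbol u$ will be the symmetrized tuple $\tau\cdot\boldsymbol u$ and the image of $\boldsymbol v$ will be $\tau'\cdot\boldsymbol v$, with $\tau,\tau'\in\Sym_k$ chosen so that the shared color $z$ of $\mathcal R_{p,q}$ lands on the same formal copy $x^\sigma$ in both.

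Recall that $\mathcal R_{p,q}$ has exactly two admissible tuples, $\boldsymbol u'=(u'_1,\ldots,u'_k)$ and $\boldsymbol v'=(v'_1,\ldots,v'_k)$. Their only common color is $z=u'_p=v'_q$, and all other $2k-2$ entries are pairwise distinct.

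Fix any $\sigma\in\Sym_{k-1}$ and proceed as follows.
\begin{enumerate}
\item By Lemma~\ref{lem:unique-placement} (applied with the pair $p,a$), choose $\tau\in\Sym_k$ with $\tau(p)=a$ and $\partial_p\tau=\sigma$. The tuple $\tau\cdot\boldsymbol u$ is admissible in $\mathcal A^{\Sym}$, and its coordinate $p$ is $u_{\tau(p)}^{\partial_p\tau}=u_a^\sigma=x^\sigma$.
\item Again by Lemma~\ref{lem:unique-placement}, choose $\tau'\in\Sym_k$ with $\tau'(q)=b$ and $\partial_q\tau'=\sigma$. Then $\tau'\cdot\boldsymbol v$ is admissible, and its coordinate $q$ is $v_b^\sigma=x^\sigma$.
\item Define $g$ on the colors of $\mathcal R_{p,q}$ by
\[
g(z)=x^\sigma,\qquad g(u'_i)=(\tau\cdot\boldsymbol u)_i\ (i\ne p),\qquad g(v'_i)=(\tau'\cdot\boldsymbol v)_i\ (i\ne q).
\]
\end{enumerate}

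The map $g$ is well defined because the colors of $\mathcal R_{p,q}$ other than $z$ are pairwise distinct, so each color is assigned exactly once. The single possible conflict is at $z$, and there the two prescriptions coordinate $p$ of $\tau\cdot\boldsymbol u$ and coordinate $q$ of $\tau'\cdot\boldsymbol v$ agree by steps 1 and 2. A palette homomorphism need not be injective, so collisions among the other images do no harm. By construction $g$ sends $\boldsymbol u'$ to $\tau\cdot\boldsymbol u$ and $\boldsymbol v'$ to $\tau'\cdot\boldsymbol v$, both admissible in $\mathcal A^{\Sym}$. Hence $g$ is a homomorphism $\mathcal R_{p,q}\to\mathcal A^{\Sym}$.

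The only real point is to make the superscripts match, which needs both $\partial_p\tau=\sigma$ and $\partial_q\tau'=\sigma$. Lemma~\ref{lem:unique-placement} supplies this for arbitrary positions, including $p=q$ or $a=b$. The hypothesis $(\boldsymbol u,a)\ne(\boldsymbol v,b)$ turns out not to be needed for the homomorphism itself; I expect it matters only where this lemma is applied later.
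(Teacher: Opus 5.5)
Your proof is correct and is essentially the paper's own argument: use Lemma~\ref{lem:unique-placement} to pick $\tau,\tau'$ so that $x^\sigma$ sits in coordinate $p$ of $\tau\cdot\boldsymbol u$ and in coordinate $q$ of $\tau'\cdot\boldsymbol v$, then send the shared color of $\mathcal R_{p,q}$ to $x^\sigma$ and every other color to the matching entry of the chosen tuple. Your closing remark is also right: the argument never uses $(\boldsymbol u,a)\ne(\boldsymbol v,b)$, and taking $\boldsymbol u=\boldsymbol v$, $a=b$ shows that $\mathcal R_{p,q}\to\mathcal A^{\Sym}$ holds whenever $\mathcal A$ has at least one admissible tuple.
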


\begin{proof}
Fix $\sigma\in\Sym_{k-1}$. By Lemma~\ref{lem:unique-placement}, there are permutations $\tau,\tau'\in\Sym_k$ satisfying
\[
\tau(p)=a,\quad\partial_p\tau=\sigma,
\quad
\tau'(q)=b,\quad \text{and} \quad \partial_q\tau'=\sigma.
\]
The admissible tuples $\tau\cdot\boldsymbol u$ and $\tau'\cdot\boldsymbol v$ of $\mathcal A^{\Sym}$ therefore have the same formal color $x^\sigma$ in coordinates $p$ and $q$, respectively. Map the special color of $\mathcal R_{p,q}$ to $x^\sigma$. Every other color of $\mathcal R_{p,q}$ occurs exactly once, so map it to the color in the corresponding coordinate of the appropriate chosen tuple. These assignments are compatible, and each admissible tuple of $\mathcal R_{p,q}$ is mapped to one of the two chosen tuples. Thus they define the required homomorphism.
\end{proof}

For a digraph $D=(V,A)$ and coordinates $p<q$, define the palette $\mathcal Q(D;p,q)$ as follows. Its color set contains $V$ and, for every arc $xy\in A$ and every $\ell\in[k]\setminus\{p,q\}$, a new color $d_{xy,\ell}$. All these new colors are pairwise distinct and are disjoint from $V$. For every arc $xy$, the palette has one admissible tuple with $x$ in coordinate $p$, $y$ in coordinate $q$, and $d_{xy,\ell}$ in coordinate $\ell$.

\begin{lemma}\label{lem:graph-to-sym}
Let $\mathcal A$ be a $k$-palette containing an admissible tuple with pairwise distinct entries, let $p<q$, and let $d=q-p+1$. If a digraph $D$ admits a homomorphism to the directed cycle of length $d$, then
\[
\mathcal Q(D;p,q)\longrightarrow\mathcal A^{\Sym}.
\]
\end{lemma}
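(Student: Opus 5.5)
The proof is a direct application of the coordinate-cycle lemma (Lemma~\ref{lem:cycle}). The idea is to send each vertex of $D$ to a color on the cycle it produces in $\mathcal A^{\Sym}$, and each auxiliary color $d_{xy,\ell}$ to the matching entry of the tuple that realizes the image arc.

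First, apply Lemma~\ref{lem:cycle} to $\mathcal A$, which has an admissible tuple with pairwise distinct entries. This gives distinct colors $u_0,\ldots,u_{d-1}$ of $\mathcal A^{\Sym}$ and admissible tuples $\boldsymbol t^0,\ldots,\boldsymbol t^{d-1}$ of $\mathcal A^{\Sym}$. The tuple $\boldsymbol t^s$ has $u_s$ in coordinate $p$ and $u_{s+1}$ in coordinate $q$, with indices taken modulo $d$.

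Next, fix a homomorphism $g:V(D)\to\mathbb Z/d\mathbb Z$ to the directed cycle of length $d$, with the vertices of the cycle labelled so that its arcs are $s\to s+1$. Then every arc $xy\in A(D)$ satisfies $g(y)=g(x)+1$. Define a map $f$ on the color set of $\mathcal Q(D;p,q)$ as follows.
\begin{itemize}
\item Each vertex $x\in V$ is sent to $u_{g(x)}$.
\item For an arc $xy$ and $\ell\in[k]\setminus\{p,q\}$, the color $d_{xy,\ell}$ is sent to the entry of $\boldsymbol t^{g(x)}$ in coordinate $\ell$.
\end{itemize}
This map is well defined. The colors $d_{xy,\ell}$ are pairwise distinct and disjoint from $V$, so each one is assigned exactly once, and vertices receive a single value through $g$.

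Finally, check that $f$ is a palette homomorphism. The admissible tuple of $\mathcal Q(D;p,q)$ attached to an arc $xy$ has $x$ in coordinate $p$, $y$ in coordinate $q$, and $d_{xy,\ell}$ in each remaining coordinate $\ell$. Under $f$ it becomes a tuple with $u_{g(x)}$ in coordinate $p$ and $u_{g(y)}=u_{g(x)+1}$ in coordinate $q$. Its other coordinates agree with $\boldsymbol t^{g(x)}$ by construction, so the image is exactly $\boldsymbol t^{g(x)}$, which is admissible in $\mathcal A^{\Sym}$. Hence $\mathcal Q(D;p,q)\to\mathcal A^{\Sym}$.

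The only point needing care is the orientation convention. The arc relation $g(y)=g(x)+1$ must match the way Lemma~\ref{lem:cycle} steps from $(u_s,u_{s+1})$ to $(u_{s+1},u_{s+2})$. If the labelling of the cycle runs the other way, relabel its vertices by $s\mapsto -s$ before defining $f$.
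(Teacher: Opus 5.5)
Your proposal is correct and matches the paper's proof: you invoke Lemma~\ref{lem:cycle} to get the cycle $u_0,\ldots,u_{d-1}$ with witnessing tuples, send each vertex $x$ to $u_{g(x)}$ and each $d_{xy,\ell}$ to the coordinate-$\ell$ entry of the tuple indexed by $g(x)$. The image of the tuple of $xy$ is then exactly that admissible tuple. Your orientation remark is harmless but unnecessary, since you already fix the labelling of the cycle so that its arcs are $s\to s+1$.
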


\begin{proof}
By Lemma~\ref{lem:cycle}, there are distinct colors $u_0,\ldots,u_{d-1}$ and admissible tuples $\boldsymbol b^0,\ldots,\boldsymbol b^{d-1}$ of $\mathcal A^{\Sym}$ such that $\boldsymbol b^s$ has $u_s$ in coordinate $p$ and $u_{s+1}$ in coordinate $q$, with indices modulo $d$. Identify the vertices of the directed $d$-cycle with $0,\ldots,d-1$ in cyclic order, and let $g$ be a homomorphism from $D$ to this cycle.

Map every vertex color $x$ of $\mathcal Q(D;p,q)$ to $u_{g(x)}$. For every arc $xy$ of $D$, we have $g(y)=g(x)+1$ modulo $d$. Map the additional color $d_{xy,\ell}$ to the entry in coordinate $\ell$ of $\boldsymbol b^{g(x)}$ for every $\ell\notin\{p,q\}$. All additional colors are distinct and disjoint from the vertex colors, so the map is well defined. The tuple corresponding to $xy$ is mapped exactly to $\boldsymbol b^{g(x)}$. Thus the map is a palette homomorphism.
\end{proof}

We now prove Theorem~\ref{thm:sequence}. Fix $k\ge3$ and $r\ge2$. 
We first define the palette that determines the lower bound. Take pairwise disjoint color classes
\[
A_1,\ldots,A_r,D_2,\ldots,D_{k-1}
\]
with $|A_s|=2$ for $s\in[r]$ and $|D_i|=r$ for $2\le i\le k-1$. The target palette $\mathcal P_{k,r}$ has the union of these classes as its color set. Clearly, it has $kr$ colors. A tuple $(x_1,\ldots,x_k)$ is admissible if and only if there are $1\le a<b\le r$ such that
\[
x_1\in A_a,\quad x_k\in A_b,
\quad \text{and} \quad x_i\in D_i \quad\text{for all } 2\le i\le k-1.
\]
For every pair $a<b$, there are four choices for $(x_1,x_k)$ and $r^{k-2}$ choices for the middle coordinates. Since the sets $A_1,\ldots,A_r$ are pairwise disjoint, choices from different pairs do not overlap. Therefore, there are $4\binom r2r^{k-2}$ admissible tuples, and
\begin{equation}\label{eq:target-density}
d(\mathcal P_{k,r})=
\frac{4\binom r2r^{k-2}}{(kr)^k}
=\frac{2(r-1)}{rk^k}.
\end{equation}
The reverse palette is isomorphic to $\mathcal P_{k,r}$: for every $s$ and $i$, choose a bijection from $A_s$ to $A_{r+1-s}$ and a bijection from $D_i$ to $D_{k+1-i}$.

We next define the auxiliary palettes. The palette $\mathcal Q_r$ has colors $c_0,\ldots,c_r$ and, for every $s\in[r]$ and $2\le\ell\le k-1$, a new color $d_{s,\ell}$. All these colors are pairwise distinct. Its admissible tuples are
\[
(c_{s-1},d_{s,2},\ldots,d_{s,k-1},c_s) \quad\text{for } s\in[r].
\]
For every pair $p<q$ other than $\{1,k\}$, we also use the switch palette $\mathcal R_{p,q}$. These are the auxiliary palettes used in Theorem~\ref{thm:realization}.

We first verify the non-homomorphism condition and the homomorphisms needed for \ref{E1} of Theorem~\ref{thm:realization}.

\begin{lemma}\label{lem:no-target-sequence}
There is no homomorphism from any auxiliary palette to $\mathcal P_{k,r}$ or to $\rev(\mathcal P_{k,r})$.
\end{lemma}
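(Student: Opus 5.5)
First, I would reduce to $\mathcal P_{k,r}$ alone. The text just before the lemma notes that $\rev(\mathcal P_{k,r})$ is isomorphic to $\mathcal P_{k,r}$. Composing with this isomorphism, it suffices to rule out homomorphisms from $\mathcal Q_r$ and from each $\mathcal R_{p,q}$ with $\{p,q\}\ne\{1,k\}$ into $\mathcal P_{k,r}$. The key structural facts about $\mathcal P_{k,r}$ are these:
\begin{itemize}
\item its coordinate supports are $C_1=A_1\cup\cdots\cup A_{r-1}$, $C_k=A_2\cup\cdots\cup A_r$, and $C_i=D_i$ for $2\le i\le k-1$;
\item the classes $A_s$ and the $D_i$ are pairwise disjoint.
\end{itemize}
So a color in a middle coordinate $i$ can only appear in coordinate $i$. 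A color from some $A_s$ can only appear in coordinates $1$ or $k$.

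\textbf{Switch palettes.} Suppose $f:\mathcal R_{p,q}\to\mathcal P_{k,r}$ with $p<q$ and $\{p,q\}\ne\{1,k\}$. The shared color $z$ satisfies $u_p=v_q=z$. Its image $f(z)$ must lie in $C_p\cap C_q$.
\begin{itemize}
\item If $p$ or $q$ is a middle coordinate, then $C_p\cap C_q=\varnothing$ by disjointness. For instance, if $p$ is middle, then $C_p=D_p$ is disjoint from $C_q$ because $q\ne p$.
\item Otherwise $\{p,q\}\subseteq\{1,k\}$, which forces $\{p,q\}=\{1,k\}$, and this case is excluded.
\end{itemize}
Hence no such homomorphism exists. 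When $k=3$, the pairs other than $\{1,3\}$ are $\{1,2\}$ and $\{2,3\}$, and both contain the middle coordinate $2$, so they are covered by the first case.

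\textbf{The palette $\mathcal Q_r$.} Suppose $f:\mathcal Q_r\to\mathcal P_{k,r}$. The tuple indexed by $s$ maps to an admissible tuple. Therefore, writing $f(c_{s-1})\in A_{a_s}$ and $f(c_s)\in A_{b_s}$, we must have $a_s<b_s$.
\begin{itemize}
\item For $2\le s\le r$, the color $c_{s-1}$ is the last entry of tuple $s-1$ and the first entry of tuple $s$. So $b_{s-1}=a_s$, namely the index of the class containing $f(c_{s-1})$.
\item Let $\lambda(j)$ denote the index of the class containing $f(c_j)$. This gives a strictly increasing sequence $\lambda(0)<\lambda(1)<\cdots<\lambda(r)$ of $r+1$ values in $[r]$.
\end{itemize}
This is impossible by pigeonhole, so no homomorphism from $\mathcal Q_r$ exists.

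\textbf{Main obstacle.} The main subtlety is making sure the reversal reduction is valid. The given isomorphism sends $A_s\to A_{r+1-s}$ and $D_i\to D_{k+1-i}$. One must check that it really maps $\rev(\mathcal P_{k,r})$ onto $\mathcal P_{k,r}$. A reversed tuple has $x_1\in A_b$ and $x_k\in A_a$ with $a<b$. Applying the map sends the first entry to $A_{r+1-b}$ and the last to $A_{r+1-a}$, and $r+1-b<r+1-a$ as required. Alternatively, the arguments above can be run directly against the reversed palette, since the support analysis is symmetric.
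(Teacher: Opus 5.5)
Your proof is correct and follows the paper's argument. Disjointness of the coordinate supports rules out each $\mathcal R_{p,q}$, and for $\mathcal Q_r$ the class indices of $f(c_0),\ldots,f(c_r)$ would form a strictly increasing sequence of $r+1$ elements of $[r]$. The only difference is in handling $\rev(\mathcal P_{k,r})$: you dispose of it once via the isomorphism noted before the lemma, and you correctly checked that it sends reversed tuples to admissible ones. The paper instead reruns each argument against the reversed palette, obtaining $b_0>\cdots>b_r$ in the $\mathcal Q_r$ case.
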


\begin{proof}
Fix $p<q$ with $\{p,q\}\ne\{1,k\}$. The special color of $\mathcal R_{p,q}$ occurs in coordinate $p$ of its first tuple and in coordinate $q$ of its second tuple. In every admissible tuple of $\mathcal P_{k,r}$, an endpoint color belongs to $A_1\cup\cdots\cup A_r$, while the color in each middle coordinate $i$ belongs to $D_i$. These sets are pairwise disjoint. Thus, unless $\{p,q\}=\{1,k\}$, the supports of coordinates $p$ and $q$ are disjoint. A homomorphism would force the image of the special color to belong to both supports, which is impossible. The same argument applies to the reverse palette, since reversal permutes the middle supports and exchanges the two endpoint coordinates.

Suppose for a contradiction that $f:\mathcal Q_r\to\mathcal P_{k,r}$ is a homomorphism. Every $f(c_s)$ belongs to exactly one of $A_1,\ldots,A_r$; write $f(c_s)\in A_{b_s}$. The image of the $s$th tuple of $\mathcal Q_r$ is admissible only if $b_{s-1}<b_s$. Hence
\(
b_0<b_1<\cdots<b_r,
\)
which is impossible because every $b_s$ lies in $[r]$. A homomorphism to $\rev(\mathcal P_{k,r})$ would instead imply $b_0>b_1>\cdots>b_r$, which is also impossible. This proves the lemma.
\end{proof}

\begin{lemma}\label{lem:pairwise-transfer-sequence}
If $\mathcal A$ and $\mathcal B$ are distinct auxiliary palettes, then
\[
\mathcal A\longrightarrow\mathcal B^{\Sym}.
\]
\end{lemma}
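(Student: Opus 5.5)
The auxiliary palettes are $\mathcal Q_r$ and the switch palettes $\mathcal R_{p,q}$ for the pairs $\{p,q\}\ne\{1,k\}$. I will split into cases according to the target $\mathcal B$ and reduce each case to Lemma~\ref{lem:repeated-to-sym} or Lemma~\ref{lem:graph-to-sym}.

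\emph{Case 1: $\mathcal B=\mathcal R_{p',q'}$.} The two admissible tuples of $\mathcal R_{p',q'}$ share the special color $z$, which occurs in coordinate $p'$ of $\boldsymbol u$ and in coordinate $q'$ of $\boldsymbol v$. Since $p'\ne q'$, the pairs $(\boldsymbol u,p')$ and $(\boldsymbol v,q')$ are distinct. Lemma~\ref{lem:repeated-to-sym} therefore gives $\mathcal R_{p,q}\to\mathcal R_{p',q'}^{\Sym}$ for every $p<q$, and in particular for every auxiliary switch palette $\mathcal A\ne\mathcal B$.

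Lemma~\ref{lem:repeated-to-sym} does not directly give $\mathcal Q_r\to\mathcal R_{p',q'}^{\Sym}$. For this, I will use that each $\boldsymbol u$ has pairwise distinct entries and apply Lemma~\ref{lem:graph-to-sym} with $(p,q)=(1,k)$. The palette $\mathcal Q_r$ is exactly $\mathcal Q(P;1,k)$, where $P$ is the directed path $c_0\to c_1\to\cdots\to c_r$ and the colors $d_{s,\ell}$ are the new colors. A directed path maps homomorphically to the directed cycle of length $k$ by $c_s\mapsto s \bmod k$. Hence $\mathcal Q_r\to\mathcal R_{p',q'}^{\Sym}$.

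\emph{Case 2: $\mathcal B=\mathcal Q_r$ and $\mathcal A=\mathcal R_{p,q}$.} Here $c_1$, which exists since $r\ge2$, occurs in coordinate $k$ of the first tuple and in coordinate $1$ of the second tuple. These are distinct tuple/coordinate pairs, so Lemma~\ref{lem:repeated-to-sym} gives $\mathcal R_{p,q}\to\mathcal Q_r^{\Sym}$ for every $p<q$.

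Both lemmas require only the stated hypotheses: a repeated color in distinct tuple/coordinate positions, or an admissible tuple with distinct entries. The one point to verify is that $\mathcal Q_r$ coincides with $\mathcal Q(P;1,k)$, including the distinctness of its extra colors. That identification is the only step needing care, and it holds directly from the definition of $\mathcal Q_r$.
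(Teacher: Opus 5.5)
Your proposal is correct and follows the paper's proof. Maps out of a switch palette come from Lemma~\ref{lem:repeated-to-sym}, using the shared color $z$ or the color $c_1$ (available since $r\ge2$). The map $\mathcal Q_r\to\mathcal R_{p',q'}^{\Sym}$ comes from Lemma~\ref{lem:graph-to-sym}, by viewing $\mathcal Q_r$ as $\mathcal Q(P;1,k)$ and sending $c_s\mapsto s \bmod k$ into the directed $k$-cycle. The only difference is that you split cases by the target $\mathcal B$, whereas the paper splits by the source $\mathcal A$.
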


\begin{proof}
If $\mathcal A=\mathcal R_{p,q}$, then every possible palette $\mathcal B$ contains a color in two tuple-coordinate positions: the shared color when $\mathcal B$ is a switch palette, and $c_1$ when $\mathcal B=\mathcal Q_r$, since $r\ge2$. Lemma~\ref{lem:repeated-to-sym} gives $\mathcal A\to\mathcal B^{\Sym}$.

If $\mathcal A=\mathcal Q_r$, then $\mathcal B$ is a switch palette and contains an admissible tuple with pairwise distinct entries. The directed path $c_0\to\cdots\to c_r$ maps to the directed $k$-cycle by $c_s\mapsto s\pmod k$. Since $\mathcal Q_r$ is precisely the palette associated with this path in coordinates $1,k$, Lemma~\ref{lem:graph-to-sym} gives $\mathcal Q_r\to\mathcal B^{\Sym}$.
\end{proof}

We next prove the density estimate needed for \ref{E2}.

\begin{lemma}\label{lem:upper-sequence}
Let $\Palt=(\C,\T)$ be a $k$-palette. Suppose that no auxiliary palette admits a homomorphism to $\Palt$. Then
\[
d(\Palt)\le\frac{2(r-1)}{rk^k}.
\]
\end{lemma}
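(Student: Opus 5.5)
The plan is to turn the two kinds of forbidden homomorphisms into two structural facts about $\Palt$, and then finish with an AM--GM optimization. The facts are: disjointness of most coordinate supports, and a bound on the $(1,k)$-coordinate digraph via Lemma~\ref{lem:walk}.

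\emph{Step 1 (switch palettes give disjoint supports).} Fix $p<q$ with $\{p,q\}\ne\{1,k\}$. Suppose some color $z$ lies in $C_p(\Palt)\cap C_q(\Palt)$, say $z=\boldsymbol x_p=\boldsymbol y_q$ for admissible tuples $\boldsymbol x,\boldsymbol y\in\T$, possibly equal. Send the special color of $\mathcal R_{p,q}$ to $z$, send $u_i$ to $x_i$ for $i\ne p$, and send $v_i$ to $y_i$ for $i\ne q$. Every color of $\mathcal R_{p,q}$ other than the special one occurs in exactly one position, so this map is well defined. It carries $\boldsymbol u\mapsto\boldsymbol x$ and $\boldsymbol v\mapsto\boldsymbol y$, so it is a homomorphism $\mathcal R_{p,q}\to\Palt$, which is excluded. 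Hence the middle supports $C_2,\ldots,C_{k-1}$ are pairwise disjoint and disjoint from $X:=C_1\cup C_k$. Write $x=|X|$, $c_i=|C_i|$ and $n=|\C|$. Then
\[
x+\sum_{i=2}^{k-1}c_i\le n .
\]

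\emph{Step 2 (no walk of length $r$).} Let $D$ be the $(1,k)$-coordinate digraph of $\Palt$ restricted to $X$. Suppose $D$ contains a directed walk $(w_0,\ldots,w_r)$. For each $s$, choose a tuple of $\T$ realizing the arc $w_{s-1}w_s$. Map $c_s\mapsto w_s$, and map $d_{s,\ell}$ to the $\ell$th entry of the tuple chosen for step $s$. Because the $d_{s,\ell}$ are pairwise distinct, this is well defined, and it is a homomorphism $\mathcal Q_r\to\Palt$. That is excluded, so $D$ has no directed walk of length $r$; in particular $D$ has no loops and no directed cycles.
\begin{itemize}
\item If $x>r$, Lemma~\ref{lem:walk} gives $|A(D)|\le (r-1)x^2/(2r)$.
\item If $x\le r$, $D$ is acyclic and loopless, so $|A(D)|\le x(x-1)/2\le (r-1)x^2/(2r)$, using $(x-1)/x\le (r-1)/r$.
\end{itemize}

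\emph{Step 3 (counting and optimizing).} Every admissible tuple is determined by an arc $(x_1,x_k)$ of $D$ together with a choice of $x_i\in C_i$ for $2\le i\le k-1$. Therefore
\[
|\T|\le \frac{r-1}{2r}\,x^2\prod_{i=2}^{k-1}c_i .
\]
Write $x^2=4\,(x/2)^2$. Apply AM--GM to the $k$ numbers $x/2,\ x/2,\ c_2,\ldots,c_{k-1}$, whose sum is at most $n$:
\[
(x/2)^2\prod_{i=2}^{k-1}c_i\le (n/k)^k .
\]
Hence
\[
|\T|\le \frac{r-1}{2r}\cdot 4\,\frac{n^k}{k^k},
\qquad\text{so}\qquad
d(\Palt)\le\frac{2(r-1)}{rk^k}.
\]

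The main obstacle is Step 1. One must notice that a homomorphism is not required to be injective, so a single admissible tuple with a repeated color already yields $\mathcal R_{p,q}\to\Palt$. This is what makes the supports genuinely disjoint rather than merely separated between distinct tuples. The remaining steps are bookkeeping.
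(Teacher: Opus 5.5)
Your proof is correct and follows the paper's argument step for step: the switch palettes give disjoint supports, $\mathcal Q_r$ rules out directed walks of length $r$ in the $(1,k)$-digraph, Lemma~\ref{lem:walk} handles $|X|>r$ and acyclicity handles $|X|\le r$, and a counting bound finishes. The only difference is in the final optimization: you apply AM--GM directly to the $k$ quantities $x/2,x/2,c_2,\ldots,c_{k-1}$, whereas the paper applies AM--GM to the middle supports alone and then maximizes $x^2(1-x)^{k-2}$ by calculus, so your ending is slightly shorter and mirrors the paper's own argument in Lemma~\ref{lem:upper-matching}.
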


\begin{proof}
Let $n=|\C|$, and write $C_i=C_i(\Palt)$ for every $i\in[k]$. We first show that the coordinate supports are separated except possibly at the endpoints. Fix $p<q$ with $\{p,q\}\ne\{1,k\}$. If $x\in C_p\cap C_q$, choose an admissible tuple having $x$ in coordinate $p$ and another having $x$ in coordinate $q$. Map the special color of $\mathcal R_{p,q}$ to $x$, and map every other color to the corresponding coordinate of the chosen tuples. Since all these other colors are pairwise distinct, this is a well-defined homomorphism $\mathcal R_{p,q}\to\Palt$, a contradiction. Therefore
\begin{equation}\label{eq:support-separation-sequence}
C_p\cap C_q=\varnothing
\quad\text{for all }\{p,q\}\ne\{1,k\}.
\end{equation}

Let $U=C_1\cup C_k$. Define a digraph $D$ on $U$, allowing loops at this point, by placing an arc $x\to y$ whenever there are colors $y_2,\ldots,y_{k-1}$ such that $(x,y_2,\ldots,y_{k-1},y)\in\T$. We claim that $D$ contains no directed walk of length $r$. Suppose for a contradiction that $v_0\to v_1\to\cdots\to v_r$ is such a walk. For every $s\in[r]$, choose an admissible tuple witnessing the arc $v_{s-1}\to v_s$. Map $c_s$ to $v_s$ for $0\le s\le r$, and map the additional colors of the $s$th tuple of $\mathcal Q_r$ to the middle coordinates of its chosen witness. Since the additional colors are all distinct, this is a well-defined homomorphism $\mathcal Q_r\to\Palt$, a contradiction.

If $|U|>r$, Lemma~\ref{lem:walk} gives $|A(D)|\le(r-1)|U|^2/(2r)$. If $|U|\le r$, then $D$ has no directed cycle; otherwise, repeatedly traversing the cycle would produce a directed walk of length $r$. In particular, $D$ has no loop, and its acyclicity gives $|A(D)|\le\binom{|U|}{2}$. Since $|U|\le r$, we have $\binom{|U|}{2}\le(r-1)|U|^2/(2r)$. Thus in all cases
\begin{equation}\label{eq:walk-bound-sequence}
|A(D)|\le\frac{r-1}{2r}|U|^2.
\end{equation}

For every fixed arc of $D$, there are at most $\prod_{i=2}^{k-1}|C_i|$ ordered choices for the middle coordinates of an admissible tuple. By \eqref{eq:support-separation-sequence}, the sets $U,C_2,\ldots,C_{k-1}$ are pairwise disjoint. Hence $\sum_{i=2}^{k-1}|C_i|\le n-|U|$, and the arithmetic--geometric mean inequality yields
\[
\prod_{i=2}^{k-1}|C_i|
\le\left(\frac{n-|U|}{k-2}\right)^{k-2}.
\]
It follows from \eqref{eq:walk-bound-sequence} that
\[
|\T|\le|A(D)|\prod_{i=2}^{k-1}|C_i|
\le\frac{r-1}{2r}|U|^2
\left(\frac{n-|U|}{k-2}\right)^{k-2}.
\]

Let $x=|U|/n$. Dividing by $n^k$, we obtain
\[
d(\Palt)\le\frac{r-1}{2r(k-2)^{k-2}}x^2(1-x)^{k-2}.
\]
For $0<x<1$, differentiation gives
\[
\frac{d}{dx}\bigl(x^2(1-x)^{k-2}\bigr)
=x(1-x)^{k-3}(2-kx).
\]
The function is increasing on $[0,2/k]$ and decreasing on $[2/k,1]$. Therefore,
\[
\max_{0\le x\le1}x^2(1-x)^{k-2}
=\left(\frac2k\right)^2\left(\frac{k-2}{k}\right)^{k-2}
=\frac{4(k-2)^{k-2}}{k^k}.
\]
Combining the last two bounds, we conclude that
\[
d(\Palt)\le\frac{r-1}{2r(k-2)^{k-2}}
\cdot\frac{4(k-2)^{k-2}}{k^k}
=\frac{2(r-1)}{rk^k}.
\]
This proves the lemma.
\end{proof}

\begin{proof}[\bfseries{Proof of Theorem~\ref{thm:sequence}}]
Apply Theorem~\ref{thm:realization} to the target palette $\mathcal P_{k,r}$ and to the auxiliary palettes $\mathcal Q_r$ and $\mathcal R_{p,q}$ with $\{p,q\}\ne\{1,k\}$. Equation \eqref{eq:target-density} states that $d(\mathcal P_{k,r})=2(r-1)/(rk^k)$.

We first verify \ref{E1}. Fix a auxiliary palette $\mathcal A$. For every other auxiliary palette $\mathcal B$, choose a homomorphism $f_{\mathcal B}:\mathcal A\to\mathcal B^{\Sym}$ as in Lemma~\ref{lem:pairwise-transfer-sequence}. Together with the identity homomorphism on $\mathcal A$, these maps define
\[
x\longmapsto\bigl(x,(f_{\mathcal B}(x))_{\mathcal B\ne\mathcal A}\bigr).
\]
By the definition of a palette product, this is a homomorphism from $\mathcal A$ to its mixed product with the symmetrizations of all other auxiliary palettes. If the mixed product admitted a homomorphism to $\mathcal P_{k,r}$ or to $\rev(\mathcal P_{k,r})$, composition would contradict Lemma~\ref{lem:no-target-sequence}. Thus \ref{E1} holds.

For \ref{E2}, let $\Palt$ be a $k$-palette with $d(\Palt)>2(r-1)/(rk^k)$. If no auxiliary palette admitted a homomorphism to $\Palt$, Lemma~\ref{lem:upper-sequence} would give the opposite inequality. Therefore some auxiliary palette admits a homomorphism to $\Palt$, which is sufficient for \ref{E2}.

The realization theorem now provides a finite $k$-graph $F_{k,r}$ that admits every auxiliary palette, does not admit $\mathcal P_{k,r}$, and satisfies
\[
\pi_{k-2}(F_{k,r})=d(\mathcal P_{k,r})=\frac{2(r-1)}{rk^k}.
\]
For fixed $k$, these values are strictly increasing, since the difference between the terms with indices $r+1$ and $r$ is $2/(k^kr(r+1))>0$. Their limit is $2/k^k$. Hence $2/k^k$ is an accumulation point of $\Piuk$, completing the proof.
\end{proof}

\section{A sharp threshold and matching constructions}\label{sec:obstructions}

We first prove the lower bound in Theorem \ref{thm:threshold}. 
Fix $i<j$, and take pairwise disjoint color classes
\(
A,B,C,D_\ell
\)
with $|A|=|B|=|C|=2$ and $|D_\ell|=3$ for $\ell\in[k]\setminus\{i,j\}$. 
The color set has size $3k$. Define a palette $\mathcal V_{i,j}$ by requiring
\[
x_i\in A\cup C,
\quad x_j\in B\cup C,
\quad \text{and} \quad x_\ell\in D_\ell\quad \text{for } \ell\ne i,j,
\]
except that we do not allow both $x_i$ and $x_j$ to belong to $C$. Thus the possible pairs in coordinates $i,j$ form
\[
(A\times B)\cup(A\times C)\cup(C\times B).
\]
Each product has four elements, and hence
\begin{equation}\label{eq:V-density}
d(\mathcal V_{i,j})=
\frac{12\cdot3^{k-2}}{(3k)^k}
=\frac4{3k^k}.
\end{equation}

\begin{lemma}\label{lem:V-implies}
Every $k$-graph that admits $\mathcal V_{i,j}$ satisfies $\Sp$ for $\{i,j\}$.
\end{lemma}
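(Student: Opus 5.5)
By Proposition~\ref{prop:W-equivalence}, it suffices to construct a palette homomorphism $\mathcal V_{i,j}\to\mathcal W_{i,j}$. As noted after the definition of palette homomorphisms, composing the witnessing coloring of $F$ with this map then shows that every $F$ admitting $\mathcal V_{i,j}$ also admits $\mathcal W_{i,j}$. So the whole proof reduces to writing down one explicit color map and checking it.

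We define $f$ on the color classes of $\mathcal V_{i,j}$ as follows:
\[
f(A)=\{a_i\},\quad f(B)=\{a_j\},\quad f(C)=\{\omega\},\quad f(D_\ell)=\{a_\ell\}\ (\ell\ne i,j).
\]
That is, every color of $A$ is sent to $a_i$, every color of $B$ to $a_j$, every color of $C$ to $\omega$, and every color of $D_\ell$ to $a_\ell$. This is well defined because the classes $A,B,C,D_\ell$ are pairwise disjoint.

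Next we check that every admissible tuple of $\mathcal V_{i,j}$ is sent to an admissible tuple of $\mathcal W_{i,j}$. In any admissible tuple, coordinate $\ell\ne i,j$ lies in $D_\ell$ and is sent to $a_\ell$. The pair in coordinates $i,j$ lies in one of three products.
\begin{itemize}
\item If it lies in $A\times B$, the image is $(a_1,\ldots,a_k)$.
\item If it lies in $A\times C$, the image has $\omega$ in coordinate $j$ and $a_\ell$ elsewhere.
\item If it lies in $C\times B$, the image has $\omega$ in coordinate $i$ and $a_\ell$ elsewhere.
\end{itemize}
All three images are admissible tuples of $\mathcal W_{i,j}$. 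The excluded case $C\times C$ would produce $\omega$ in both coordinates $i$ and $j$, which is not admissible in $\mathcal W_{i,j}$; this is exactly why the definition of $\mathcal V_{i,j}$ forbids it. Hence $f$ is a homomorphism, and the lemma follows.

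There is no real obstacle beyond checking this case split and confirming that the definition of $\mathcal V_{i,j}$ avoids the doubly-$\omega$ tuple.
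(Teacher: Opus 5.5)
Your proof is correct, but it takes a different and shorter route than the paper's.

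The paper does not go through $\mathcal W_{i,j}$ here. It verifies $\Sp$ directly from the definition:
\begin{itemize}
\item it puts into $F^1_{i,j}$ the edges whose coordinate-$j$ color lies in $B$, and into $F^2_{i,j}$ those whose coordinate-$j$ color lies in $C$;
\item it then checks that the witnessing order is vanishing on each part, and that common facets satisfy the type condition.
\end{itemize}

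You instead note that the class-collapsing map $A\mapsto a_i$, $B\mapsto a_j$, $C\mapsto\omega$, $D_\ell\mapsto a_\ell$ is a homomorphism $\mathcal V_{i,j}\to\mathcal W_{i,j}$. In fact $\mathcal V_{i,j}$ is exactly the blow-up of $\mathcal W_{i,j}$ along this map. You then invoke the already-proved direction of Proposition~\ref{prop:W-equivalence}.

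What each approach buys:
\begin{itemize}
\item Yours reuses existing work and matches the paper's own use of \eqref{eq:R-to-W} and \eqref{eq:Q-to-W} elsewhere. It also explains where $\mathcal V_{i,j}$ comes from.
\item The paper's is self-contained and displays the decomposition explicitly.
\end{itemize}

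The two induced decompositions differ slightly. Pulling back the split from the proof of Proposition~\ref{prop:W-equivalence} groups the $A\times B$ edges with the $A\times C$ edges. The paper groups them with the $C\times B$ edges. Both are valid, because edges mapped to the all-$a$ tuple fit in either part.
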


\begin{proof}
Fix a witnessing order and coloring. Let $F^1_{i,j}$ contain the edges whose color in coordinate $j$ belongs to $B$, and let $F^2_{i,j}$ contain the edges whose color in coordinate $j$ belongs to $C$. These two subgraphs partition the edge set because the only possible pairs in coordinates $i,j$ are the three products displayed above.

In $F^1_{i,j}$, colors from $A\cup C$ occur only in coordinate $i$, colors from $B$ occur only in coordinate $j$, and colors from $D_\ell$ occur only in coordinate $\ell$. Thus the witnessing order is vanishing for $F^1_{i,j}$. In $F^2_{i,j}$, the pair $C\times C$ is forbidden, so the color in coordinate $i$ belongs to $A$. Hence colors from $A$ occur only in coordinate $i$, colors from $C$ occur only in coordinate $j$, and colors from $D_\ell$ occur only in coordinate $\ell$. The same order is therefore vanishing for $F^2_{i,j}$.

Let a facet lie in an edge of each part. A color from $A$ gives type $i$ in both parts, and a color from $D_\ell$ gives type $\ell$ in both parts. A color from $C$ gives type $i$ in $F^1_{i,j}$ and type $j$ in $F^2_{i,j}$. A color from $B$ cannot occur in $F^2_{i,j}$. Thus the compatibility condition in $\Sp$ holds.
\end{proof}

\begin{proof}[\bfseries{Proof of Theorem \ref{thm:threshold}}]
Let $F$ fail $\Sp$. Then it fails the condition for some pair $\{i,j\}$. 
By Lemma \ref{lem:V-implies}, $F$ does not admit $\mathcal V_{i,j}$. Equations \eqref{eq:palette-lower} and \eqref{eq:V-density} yield
\[
\pi_{k-2}(F)\ge\frac4{3k^k}.
\]

For the reverse inequality, take the graph $F_{k,3}$ constructed in the proof of Theorem \ref{thm:sequence}. It does not admit $\mathcal P_{k,3}$ and satisfies $\pi_{k-2}(F_{k,3})=4/(3k^k)$. We claim that
\begin{equation}\label{eq:W-to-P}
\mathcal W_{1,k}\longrightarrow\mathcal P_{k,3}.
\end{equation}
Choose colors $\alpha_s\in A_s$ for $s\in[3]$ and $\delta_\ell\in D_\ell$ for $2\le\ell\le k-1$. Define
\[
a_1\mapsto\alpha_1,
\quad \omega\mapsto\alpha_2,
\quad a_k\mapsto\alpha_3,
\quad \text{and} \quad 
a_\ell\mapsto\delta_\ell\quad \text{for all } 2\le\ell\le k-1.
\]
The pairs in coordinates $1$ and $k$ in the images of the three admissible tuples of $\mathcal W_{1,k}$ lie in $A_1\times A_3$, $A_2\times A_3$, and $A_1\times A_2$, respectively. They are all admissible, so \eqref{eq:W-to-P} holds.

If $F_{k,3}$ satisfied $\Sp$ for $\{1,k\}$, Proposition \ref{prop:W-equivalence} would imply that it admits $\mathcal W_{1,k}$. Equation \eqref{eq:W-to-P} would then imply that it admits $\mathcal P_{k,3}$, a contradiction. Hence $F_{k,3}$ fails $\Sp$ and has density $4/(3k^k)$. This completes the proof.
\end{proof}

The new threshold strictly improves the earlier lower bound:
\[
\frac1{k^k}<\frac3{(k+1)^k}<\frac4{3k^k} \quad
\text{for all } k\ge3.
\]
The first inequality follows from $(1+1/k)^k<e<3$. For the second, the binomial theorem gives
\[
\left(1+\frac1k\right)^k
>2+\frac{k-1}{2k}
\ge\frac73>\frac94.
\]
Therefore, any possible example in Problem~7.4 of Lin, Wang and Zhou with density $3/(k+1)^k$ must satisfy $\Sp$.

We now prove Theorem \ref{thm:matching}. Let $\mathcal M$ be a matching of size $m$ satisfying the assumptions of Theorem~\ref{thm:matching}, and let
\[
V(\mathcal M)=\bigcup_{h\in\mathcal M}h
\]
be the set of coordinates covered by the matching. 
For every $h=\{p,q\}\in\mathcal M$, with $p<q$, take two colors $x_h,y_h$. 
These two colors are used in both coordinates $p$ and $q$. 
For every coordinate $s\in[k]\setminus V(\mathcal M)$, take a new color $z_s$, used only in coordinate $s$. 
All chosen colors are pairwise distinct. Define the target palette $\mathcal B_{\mathcal M}$ by allowing, independently for every $h=\{p,q\}\in\mathcal M$, either
\[
(x_h,y_h)\quad\text{or}\quad(y_h,x_h)
\]
in coordinates $p,q$, and by requiring the color $z_s$ in every unmatched coordinate $s$. The palette has $2m+(k-2m)=k$ colors and $2^m$ admissible tuples. Hence
\begin{equation}\label{eq:B-density}
d(\mathcal B_{\mathcal M})=\frac{2^m}{k^k}.
\end{equation}
Since $\rho(\mathcal M)=\mathcal M$, reversing the coordinates sends the block indexed by $h$ to the block indexed by $\rho(h)$. The map
\[
x_h\mapsto x_{\rho(h)},\quad y_h\mapsto y_{\rho(h)},\quad \text{and} \quad z_s\mapsto z_{\rho(s)}
\]
is therefore an isomorphism from $\rev(\mathcal B_{\mathcal M})$ to $\mathcal B_{\mathcal M}$.

We next introduce the oriented cycles used in the auxiliary palettes. Let $r$ be a positive integer. The digraph $J_r$ is the orientation of the cycle on $v_0,v_1,\ldots,v_{3r-1}$ in which, for every $0\le a<r$, the three consecutive edges are oriented as
\begin{equation}\label{eq:odd-orientation}
v_{3a}\to v_{3a+1},
\quad v_{3a+1}\to v_{3a+2},
\quad \text{and} \quad v_{3a+3}\to v_{3a+2},
\end{equation}
where the subscripts are taken modulo $3r$. Let $T_3$ be the transitive tournament on $\{0,1,2\}$, and let $\overrightarrow{C}_s$ be the directed cycle on $\mathbb Z_s$ whose arcs are $b\to b+1$.

\begin{lemma}\label{lem:odd-cycle-maps}
For every $r\ge1$, there is a homomorphism $J_r\to T_3$. If $s\ge2$ divides $r$, then $J_r\to\overrightarrow{C}_s$. Moreover, the underlying graph of $J_r$ is an odd cycle whenever $r$ is odd.
\end{lemma}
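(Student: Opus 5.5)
All three assertions will follow from explicit maps on the vertices $v_0,\ldots,v_{3r-1}$, using only the orientation rule \eqref{eq:odd-orientation}. Within the $a$th block the arcs are $v_{3a}\to v_{3a+1}\to v_{3a+2}$ and $v_{3a+3}\to v_{3a+2}$. Here $v_{3a+3}$ is the first vertex of the next block, and $v_{3r}=v_0$.

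For $J_r\to T_3$, write $T_3$ with arcs $0\to1$, $0\to2$ and $1\to2$. Define
\[
g(v_{3a})=0,\qquad g(v_{3a+1})=1,\qquad g(v_{3a+2})=2 .
\]
Then $v_{3a}\to v_{3a+1}$ maps to $0\to1$ and $v_{3a+1}\to v_{3a+2}$ maps to $1\to2$. The arc $v_{3a+3}\to v_{3a+2}$ maps to $g(v_{3a+3})\to 2$, which is $0\to2$. This holds also when $a=r-1$, since $v_{3r}=v_0$ again has image $0$. Every arc of $J_r$ therefore lands on an arc of $T_3$.

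For $J_r\to\overrightarrow{C}_s$ when $s\mid r$, the idea is that along the cycle the net orientation of each block is $+1$. Define
\[
g(v_{3a})=a,\qquad g(v_{3a+1})=a+1,\qquad g(v_{3a+2})=a+2 \pmod s .
\]
The two forward arcs of block $a$ map to $a\to a+1$ and $a+1\to a+2$. The backward arc $v_{3a+3}\to v_{3a+2}$ requires $g(v_{3a+3})+1=g(v_{3a+2})$, that is, $(a+1)+1=a+2$, which holds. The only point needing care is the wrap-around: the vertex $v_{3r}=v_0$ must receive both the value $r$ and the value $0$. These agree modulo $s$ exactly because $s\mid r$, so $g$ is well defined. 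This consistency check is the only nontrivial step, and it is where the hypothesis $s\mid r$ is used. The assumption $s\ge2$ only ensures that $\overrightarrow{C}_s$ has no loops, so that it is a genuine directed cycle.

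Finally, the underlying graph of $J_r$ is by construction the cycle $v_0v_1\cdots v_{3r-1}v_0$. Each block contributes the three consecutive edges $v_{3a}v_{3a+1}$, $v_{3a+1}v_{3a+2}$ and $v_{3a+2}v_{3a+3}$, so the cycle has length $3r$. When $r$ is odd, $3r$ is odd, and the underlying graph is an odd cycle.
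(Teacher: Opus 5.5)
Your proof is correct and follows the paper's argument essentially verbatim. You use the same block maps: $v_{3a},v_{3a+1},v_{3a+2}\mapsto 0,1,2$ into $T_3$, and $\mapsto a,a+1,a+2 \pmod s$ into $\overrightarrow{C}_s$. As in the paper, the wrap-around consistency $r\equiv 0 \pmod s$ is the only place the hypothesis $s\mid r$ is used, and the underlying cycle has length $3r$, which is odd when $r$ is odd.
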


\begin{proof}
Map $v_{3a},v_{3a+1},v_{3a+2}$ to $0,1,2$, respectively. The three arcs in \eqref{eq:odd-orientation} are then mapped to $0\to1$, $1\to2$, and $0\to2$, so this is a homomorphism to $T_3$.

Now suppose that $s\mid r$. For every $0\le a<r$, define, with all values taken modulo $s$,
\[
f(v_{3a})=a,\quad f(v_{3a+1})=a+1,\quad \text{and} \quad f(v_{3a+2})=a+2.
\]
Since $f(v_{3r})=r=0=f(v_0)$ in $\mathbb Z_s$, the definition is consistent at the last edge of the cycle. Every arc in \eqref{eq:odd-orientation} is mapped to an arc of $\overrightarrow{C}_s$, and hence $J_r\to\overrightarrow{C}_s$. Finally, the underlying graph of $J_r$ is a cycle of length $3r$, which is odd when $r$ is odd.
\end{proof}

For every $h=\{p,q\}\in\mathcal M$, with $p<q$, let
\[
d_h=q-p+1
\quad \text{and} \quad
r_h=\operatorname{lcm}(3,d_h).
\]
The assumption that $q-p$ is even implies that $d_h$ is odd. Hence $r_h$ is odd and is divisible by both $3$ and $d_h$. Define
\[
\mathcal Q_h=\mathcal Q(J_{r_h};p,q).
\]
By Lemma \ref{lem:odd-cycle-maps}, $J_{r_h}$ admits homomorphisms to $T_3$, $\overrightarrow{C}_3$, and $\overrightarrow{C}_{d_h}$, and its underlying graph is an odd cycle. The map $0\mapsto a_p$, $1\mapsto\omega$, $2\mapsto a_q$ sends the three arcs of $T_3$ to the three pairs occurring in coordinates $p,q$ of the admissible tuples of $\mathcal W_{p,q}$. Since each additional color of $\mathcal Q_h$ occurs in only one coordinate of one tuple, this map extends to a palette homomorphism
\begin{equation}\label{eq:Q-to-W}
\mathcal Q_h\longrightarrow\mathcal W_{p,q}.
\end{equation}

For every $g=\{p,q\}\in\binom{[k]}2\setminus\mathcal M$, use the switch palette $\mathcal R_{p,q}$. Let $\mathfrak A_{\mathcal M}$ be the family consisting of these palettes and $\mathcal Q_h$, $h\in\mathcal M$.

\begin{lemma}\label{lem:upper-matching}
No auxiliary palette admits a homomorphism to $\mathcal B_{\mathcal M}$ or to $\rev(\mathcal B_{\mathcal M})$. Moreover, if a $k$-palette $\Palt$ satisfies $d(\Palt)>2^m/k^k$, then one of the auxiliary palettes admits a homomorphism to $\Palt$ or to $\rev(\Palt)$.
\end{lemma}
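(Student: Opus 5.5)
The statement has two parts. The first is a non-homomorphism claim. The second is a density bound obtained by contraposition, in the same way as Lemma~\ref{lem:upper-sequence}, with Mantel's theorem replacing Lemma~\ref{lem:walk}. Since $\rev(\mathcal B_{\mathcal M})\cong\mathcal B_{\mathcal M}$, the first part only needs homomorphisms into $\mathcal B_{\mathcal M}$. For the second part it suffices to produce a homomorphism into $\Palt$ itself.

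\emph{Non-homomorphism.} In $\mathcal B_{\mathcal M}$, the colors $x_h,y_h$ occur only in the two coordinates of $h$, and $z_s$ occurs only in coordinate $s$. Hence $C_p(\mathcal B_{\mathcal M})\cap C_q(\mathcal B_{\mathcal M})=\varnothing$ whenever $\{p,q\}\notin\mathcal M$. A homomorphism $\mathcal R_{p,q}\to\mathcal B_{\mathcal M}$ would send the special color into both supports, which is impossible.

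Now take $h=\{p,q\}\in\mathcal M$. The $(p,q)$-coordinate digraph of $\mathcal B_{\mathcal M}$ consists of the two arcs $x_h\to y_h$ and $y_h\to x_h$, with no loops. A homomorphism $\mathcal Q_h\to\mathcal B_{\mathcal M}$ restricts, on the vertex colors, to a digraph homomorphism from $J_{r_h}$ to this $2$-cycle. That would be a proper $2$-coloring of the underlying graph of $J_{r_h}$. This is impossible, because $r_h$ is odd and Lemma~\ref{lem:odd-cycle-maps} shows the underlying graph is an odd cycle.

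\emph{Density bound: supports and the digraphs $D_h$.} Suppose that no member of $\mathfrak A_{\mathcal M}$ maps to $\Palt$; we show $d(\Palt)\le 2^m/k^k$. Let $n=|\C|$ and $C_i=C_i(\Palt)$.
\begin{itemize}
\item Exactly as in the proof of Lemma~\ref{lem:upper-sequence}, the absence of $\mathcal R_{p,q}\to\Palt$ gives $C_p\cap C_q=\varnothing$ for every $\{p,q\}\notin\mathcal M$.
\item For $h=\{p,q\}\in\mathcal M$, set $U_h=C_p\cup C_q$. The sets $U_h$ ($h\in\mathcal M$) and $C_s$ ($s\notin V(\mathcal M)$) are then pairwise disjoint.
\item Let $D_h$ be the $(p,q)$-coordinate digraph of $\Palt$ restricted to $U_h$, with loops allowed.
\item A digraph homomorphism $J_{r_h}\to D_h$ would give $\mathcal Q_h\to\Palt$: map each additional color $d_{xy,\ell}$ to coordinate $\ell$ of a chosen witnessing tuple for the arc. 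These additional colors are all distinct, so the map is well defined. Hence no such digraph homomorphism exists.
\end{itemize}

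\emph{The main step: $|A(D_h)|\le|U_h|^2/2$.} This follows from three observations.
\begin{itemize}
\item $D_h$ has no loop, since mapping all of $J_{r_h}$ to a looped vertex would give a homomorphism.
\item Suppose three vertices of $D_h$ are pairwise joined by arcs in some direction. Choosing one arc per pair yields a tournament on three vertices, which is either a transitive triangle or a directed $3$-cycle. Lemma~\ref{lem:odd-cycle-maps} gives $J_{r_h}\to T_3$ and $J_{r_h}\to\overrightarrow C_3$, since $3\mid r_h$. Composing would give $J_{r_h}\to D_h$, a contradiction.
\item So the underlying simple graph of $D_h$ is triangle-free. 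Mantel's theorem bounds its edges by $|U_h|^2/4$, and each edge carries at most two arcs.
\end{itemize}
This is the step I expect to need the most care, since it is where the choice $r_h=\operatorname{lcm}(3,d_h)$ and Lemma~\ref{lem:odd-cycle-maps} are really used. The divisibility by $d_h$ is only needed elsewhere, for the transfer lemmas in \ref{E1}.

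\emph{Conclusion.} An admissible tuple is determined by one arc of each $D_h$ together with a color from each $C_s$ with $s$ unmatched. Therefore
\[
|\T|\le\prod_{h\in\mathcal M}\frac{|U_h|^2}{2}\prod_{s\notin V(\mathcal M)}|C_s|,
\]
subject to $\sum_h|U_h|+\sum_s|C_s|\le n$. By weighted AM--GM, with weight $2$ on each $U_h$ and weight $1$ on each $C_s$ (total weight $k$), the maximum is attained at $|U_h|=2n/k$ and $|C_s|=n/k$. This gives
\[
|\T|\le 2^{-m}\Bigl(\frac{2n}{k}\Bigr)^{2m}\Bigl(\frac nk\Bigr)^{k-2m}=\frac{2^m n^k}{k^k},
\]
which is the required density bound.
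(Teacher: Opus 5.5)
Your proposal is correct and follows the paper's proof essentially step for step. The steps are: support separation from the switch palettes $\mathcal R_{p,q}$, the odd-cycle $2$-colouring obstruction for $\mathcal Q_h\to\mathcal B_{\mathcal M}$, the loop and Mantel argument giving $|A(D_h)|\le|U_h|^2/2$, and AM--GM over the pairwise disjoint blocks $U_h$ and $C_s$. The differences are only cosmetic: you handle $\rev(\mathcal B_{\mathcal M})$ through the isomorphism $\rev(\mathcal B_{\mathcal M})\cong\mathcal B_{\mathcal M}$ rather than repeating the argument, and you use weighted AM--GM where the paper writes $|U_h|^2/2=2(|U_h|/2)^2$ and applies AM--GM to $k$ equal-weight factors.
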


\begin{proof}
We first prove the non-homomorphism statement. Let $g=\{p,q\}\notin\mathcal M$. In $\mathcal B_{\mathcal M}$, the supports of two distinct coordinates meet exactly when their coordinate pair belongs to $\mathcal M$. Thus the special color of $\mathcal R_{p,q}$ cannot be mapped simultaneously into the supports of coordinates $p$ and $q$. Hence $\mathcal R_{p,q}\not\to\mathcal B_{\mathcal M}$. Since $\rho(\mathcal M)=\mathcal M$, the same argument applies to the reverse palette.

Now let $h=\{p,q\}\in\mathcal M$. Suppose that $\mathcal Q_h\to\mathcal B_{\mathcal M}$. Every vertex color of $J_{r_h}$ is then mapped to one of $x_h,y_h$, and the endpoints of every arc are mapped to different colors. This would give a proper two-coloring of the underlying cycle of $J_{r_h}$, which is impossible because $r_h$ is odd. Hence $\mathcal Q_h\not\to\mathcal B_{\mathcal M}$. In the reverse target, coordinates $p,q$ carry the two-color block indexed by $\rho(h)\in\mathcal M$, so the same argument applies.

It remains to prove the density statement. Suppose for a contradiction that no auxiliary palette admits a homomorphism to $\Palt=(\C,\T)$ or to $\rev(\Palt)$. Write $C_i=C_i(\Palt)$ for every $i\in[k]$. The absence of a homomorphism from $\mathcal R_{p,q}$ gives
\begin{equation}\label{eq:matching-support}
C_p\cap C_q=\varnothing
\quad\text{whenever }\{p,q\}\notin\mathcal M.
\end{equation}
Indeed, a color in the intersection could be used as the image of the special color, while the remaining colors, each of which occurs once, could be mapped to two witnessing tuples.

For every $h=\{p,q\}\in\mathcal M$, let $U_h=C_p\cup C_q$. Since $\mathcal M$ is a matching, \eqref{eq:matching-support} shows that all sets $U_h$, together with the supports $C_s$ for $s\notin V(\mathcal M)$, are pairwise disjoint. Therefore
\begin{equation}\label{eq:block-sum}
\sum_{h\in\mathcal M}|U_h|+
\sum_{s\notin V(\mathcal M)}|C_s|
\le|\C|.
\end{equation}

Fix $h=\{p,q\}\in\mathcal M$ with $p<q$. Define a digraph $L_h$ on $U_h$, allowing loops at this point, by placing an arc $x\to y$ whenever some admissible tuple of $\Palt$ has $x$ in coordinate $p$ and $y$ in coordinate $q$.

We first observe that there is no homomorphism $J_{r_h}\to L_h$. Indeed, if $f:J_{r_h}\to L_h$ were such a homomorphism, then for every arc $uv$ of $J_{r_h}$ we could choose an admissible tuple of $\Palt$ witnessing the arc $f(u)\to f(v)$. Mapping the vertex colors of $\mathcal Q_h$ according to $f$ and mapping the additional colors to the corresponding coordinates of the chosen tuples would give a homomorphism $\mathcal Q_h\to\Palt$, a contradiction.

In particular, $L_h$ has no loop, since a constant map would otherwise give $J_{r_h}\to L_h$. Suppose that $|A(L_h)|>|U_h|^2/2$. Let $G_h$ be the simple graph on $U_h$ in which $xy$ is an edge if $L_h$ contains at least one of the arcs $x\to y$ and $y\to x$. Since every edge of $G_h$ accounts for at most two arcs of $L_h$,
\[
e(G_h)\ge\frac{|A(L_h)|}{2}>\frac{|U_h|^2}{4}.
\]
By Mantel's theorem~\cite{Mantel1907}, $G_h$ contains a triangle. 
Choosing one available direction on each of its three edges gives either a transitive tournament $T_3$ or a directed triangle $\overrightarrow{C}_3$ inside $L_h$. 
By Lemma \ref{lem:odd-cycle-maps}, $J_{r_h}\to T_3$, and $J_{r_h}\to\overrightarrow{C}_3$ because $3\mid r_h$. In both cases we obtain $J_{r_h}\to L_h$, a contradiction. Consequently,
\begin{equation}\label{eq:matching-arcs}
|A(L_h)|\le\frac{|U_h|^2}{2}.
\end{equation}

Every admissible tuple determines one arc of $L_h$ for each $h\in\mathcal M$ and one color from $C_s$ for each unmatched coordinate $s$. It follows that
\[
|\T|\le\prod_{h\in\mathcal M}|A(L_h)|
\prod_{s\notin V(\mathcal M)}|C_s|.
\]
Using \eqref{eq:matching-arcs}, we obtain
\[
|\T|\le2^m
\prod_{h\in\mathcal M}\left(\frac{|U_h|}{2}\right)^2
\prod_{s\notin V(\mathcal M)}|C_s|.
\]
The product after $2^m$ has exactly $2m+(k-2m)=k$ non-negative factors, and their sum is the left-hand side of \eqref{eq:block-sum}. The arithmetic--geometric mean inequality therefore gives
\[
|\T|\le2^m\left(\frac{|\C|}{k}\right)^k.
\]
Dividing by $|\C|^k$ yields $d(\Palt)\le2^m/k^k$, contrary to the assumption. This proves the lemma.
\end{proof}

The next lemma verifies the mixed-product condition.

\begin{lemma}\label{lem:pairwise-transfer-matching}
For every $\mathcal A\in\mathfrak A_{\mathcal M}$, there is no homomorphism from
\[
\mathcal A\times
\prod_{\mathcal A'\in\mathfrak A_{\mathcal M}\setminus\{\mathcal A\}}
(\mathcal A')^{\Sym}
\]
to $\mathcal B_{\mathcal M}$ or to $\rev(\mathcal B_{\mathcal M})$.
\end{lemma}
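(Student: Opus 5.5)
We would follow the same scheme as in the proof of Theorem~\ref{thm:sequence}. For each $\mathcal A\in\mathfrak A_{\mathcal M}$ and each $\mathcal A'\ne\mathcal A$, we produce a homomorphism $f_{\mathcal A'}:\mathcal A\to(\mathcal A')^{\Sym}$. Combining these with the identity on $\mathcal A$ via $x\mapsto(x,(f_{\mathcal A'}(x))_{\mathcal A'})$ gives a homomorphism from $\mathcal A$ into the mixed product. If the mixed product mapped to $\mathcal B_{\mathcal M}$ or $\rev(\mathcal B_{\mathcal M})$, composing would give a homomorphism from $\mathcal A$ to one of these targets. That contradicts the first part of Lemma~\ref{lem:upper-matching}. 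So everything reduces to the pairwise transfers $\mathcal A\to(\mathcal A')^{\Sym}$.

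We split into cases according to the type of $\mathcal A$.

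\emph{Case $\mathcal A=\mathcal R_{p,q}$.} We apply Lemma~\ref{lem:repeated-to-sym}, which requires a color of $\mathcal A'$ appearing in two distinct tuple-coordinate positions.
\begin{itemize}
\item If $\mathcal A'$ is a switch palette, its shared color $z$ works.
\item If $\mathcal A'=\mathcal Q_{h'}$, every vertex $v$ of $J_{r_{h'}}$ has degree $2$ in the underlying cycle, so its color occurs in two tuples. Take two distinct positions $(\boldsymbol u,a)\ne(\boldsymbol v,b)$ given by the two arcs at $v$.
\end{itemize}

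\emph{Case $\mathcal A=\mathcal Q_h$ with $h=\{p,q\}$, $p<q$.} We use Lemma~\ref{lem:graph-to-sym} with coordinates $p,q$ and $d=d_h$. It needs two inputs. First, $\mathcal A'$ must contain an admissible tuple with pairwise distinct entries. Second, $J_{r_h}$ must map to $\overrightarrow{C}_{d_h}$, and this is given by Lemma~\ref{lem:odd-cycle-maps} since $d_h\mid r_h$.

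For the first input:
\begin{itemize}
\item A switch palette $\mathcal R_{p',q'}$ has tuples with distinct entries by definition.
\item For $\mathcal A'=\mathcal Q_{h'}$ with $h'\ne h$, each tuple has two vertex colors $x\ne y$ (because $J_{r_{h'}}$ has no loops) in coordinates $p',q'$, and fresh colors $d_{xy,\ell}$ elsewhere. So every tuple has pairwise distinct entries.
\end{itemize}
Lemma~\ref{lem:graph-to-sym} then gives $\mathcal Q(J_{r_h};p,q)=\mathcal Q_h\to(\mathcal A')^{\Sym}$, regardless of which coordinate pair $\mathcal A'$ uses.

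The main point to check is the case $\mathcal A=\mathcal R_{p,q}$, $\mathcal A'=\mathcal Q_{h'}$. We must confirm that the hypothesis $(\boldsymbol u,a)\ne(\boldsymbol v,b)$ of Lemma~\ref{lem:repeated-to-sym} is met. Take $v=v_1$ in $J_{r_{h'}}$: it is the head of the arc $v_0\to v_1$ and the tail of $v_1\to v_2$. So its color appears in coordinate $q'$ of one tuple and coordinate $p'$ of another, which are different positions. No evenness or parity is needed here; the parity of $d_h$ enters only through $d_h\mid r_h$ in the other case. With all pairwise transfers in hand, the product argument above completes the proof.
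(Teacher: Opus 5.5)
Your proposal is correct and follows the paper's proof essentially step for step. You get the pairwise transfers from Lemma~\ref{lem:repeated-to-sym} when $\mathcal A$ is a switch palette, and from Lemmas~\ref{lem:cycle}, \ref{lem:odd-cycle-maps} and~\ref{lem:graph-to-sym} when $\mathcal A=\mathcal Q_h$; you then conclude with the diagonal map into the mixed product and the non-homomorphism part of Lemma~\ref{lem:upper-matching}. (A small aside: $d_h\mid r_h$ holds for every $d_h$ by the definition of $r_h$ as an lcm, so the parity of $d_h$ plays no role in this lemma at all; it is used only to make $r_h$ odd in Lemma~\ref{lem:upper-matching}.)
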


\begin{proof}
By Lemma \ref{lem:upper-matching}, no auxiliary palette admits a homomorphism to the target or to its reverse. We now connect the auxiliary palettes through symmetrization.

Let $\mathcal A,\mathcal A'\in\mathfrak A_{\mathcal M}$ be distinct. 
Suppose first that $\mathcal A$ is a switch palette. 
If $\mathcal A'$ is another switch palette, its special color occurs in two admissible tuples. 
If $\mathcal A'=\mathcal Q_h$, every vertex color of the underlying cycle occurs in two distinct tuple-coordinate positions. 
In either case, Lemma \ref{lem:repeated-to-sym} gives
\[
\mathcal A\longrightarrow(\mathcal A')^{\Sym}.
\]

It remains to consider $\mathcal A=\mathcal Q_h$, where $h=\{p,q\}$ and $p<q$. 
Every admissible tuple of $\mathcal A'$ has pairwise distinct entries. 
By Lemma \ref{lem:cycle}, the coordinate digraph of $(\mathcal A')^{\Sym}$ in positions $p,q$ contains a directed cycle of length $d_h=q-p+1$. 
Since $d_h\mid r_h$, Lemma \ref{lem:odd-cycle-maps} gives $J_{r_h}\to\overrightarrow{C}_{d_h}$. Lemma~\ref{lem:graph-to-sym} therefore yields
\[
\mathcal Q_h\longrightarrow(\mathcal A')^{\Sym}.
\]
Thus every auxiliary palette admits a homomorphism to the symmetrization of every other auxiliary palette.

Fix $\mathcal A\in\mathfrak A_{\mathcal M}$. 
Use the identity homomorphism in the $\mathcal A$-factor and the homomorphisms just proved in all other factors. 
By the definition of a palette product, these maps give a homomorphism from $\mathcal A$ to the mixed product in the statement. 
If that product admitted a homomorphism to $\mathcal B_{\mathcal M}$ or to its reverse, 
composition would contradict Lemma \ref{lem:upper-matching}. This proves the lemma.
\end{proof}

\begin{proof}[\bfseries{Proof of Theorem \ref{thm:matching}}]
Apply Theorem \ref{thm:realization} to the target palette $\mathcal B_{\mathcal M}$ and the auxiliary palettes in $\mathfrak A_{\mathcal M}$. 
Equation \eqref{eq:B-density} states that $d(\mathcal B_{\mathcal M})=2^m/k^k$. 
Lemma~\ref{lem:pairwise-transfer-matching} verifies condition \ref{E1}, while Lemma \ref{lem:upper-matching} verifies condition \ref{E2}. Therefore there exists a finite $k$-graph $F_{\mathcal M}$ that admits every auxiliary palette, does not admit $\mathcal B_{\mathcal M}$, and satisfies
\[
\pi_{k-2}(F_{\mathcal M})=d(\mathcal B_{\mathcal M})=\frac{2^m}{k^k}.
\]

It remains to verify $\Sp$. If $g=\{p,q\}\notin\mathcal M$, then $F_{\mathcal M}$ admits $\mathcal R_{p,q}$, and \eqref{eq:R-to-W} shows that it admits $\mathcal W_{p,q}$. If $h=\{p,q\}\in\mathcal M$, then $F_{\mathcal M}$ admits $\mathcal Q_h$, and \eqref{eq:Q-to-W} shows that it admits $\mathcal W_{p,q}$. Proposition~\ref{prop:W-equivalence} now implies that $F_{\mathcal M}$ satisfies $\Sp$ for every coordinate pair.

Finally, $\pi_{k-2}(F_{\mathcal M})=2^m/k^k>0$. The zero-density characterization recalled in Section~\ref{sec:Preliminaries} shows that $F_{\mathcal M}$ has no vanishing order. This completes the proof.
\end{proof}

\section{Concluding remarks}\label{sec:concluding-remarks}

The extension parameters do not increase with $t$, while the classes of $t$-layered hypergraphs increase with $t$.

\begin{proposition}\label{prop:monotone}
For every $k$-palette $\Palt$,
\[
\delta_0(\Palt)\ge\delta_1(\Palt)\ge\cdots\ge\delta_{k-1}(\Palt).
\]
For every $k$-graph $F$, we also have
\[
\pi_0^{\ext}(F)\ge\pi_1^{\ext}(F)\ge\cdots\ge\pi_{k-1}^{\ext}(F),
\]
Every $t$-layered $k$-graph is $(t+1)$-layered for $0\le t\le k-2$.
\end{proposition}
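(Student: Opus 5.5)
The proof has three parts, taken in order: the palette inequality, the inequality for $\pi^{\ext}$, and the inclusion of the layered classes.

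\emph{Palette inequality.} Fix $0\le t\le k-2$. We show $\delta_t(\Palt)\ge\delta_{t+1}(\Palt)$. Take $I\in\binom{[k]}t$ and $\boldsymbol c\in\C^I$ attaining the minimum in \eqref{eq:delta-t}. Choose any coordinate $j\notin I$ and put $I'=I\cup\{j\}$. Partition $N_I^{\Palt}(\boldsymbol c)$ according to the value $y_j$. The completions with $y_j=x$ correspond bijectively to $N_{I'}^{\Palt}(\boldsymbol c,x)$. Hence
\[
|N_I^{\Palt}(\boldsymbol c)|=\sum_{x\in\C}|N_{I'}^{\Palt}(\boldsymbol c,x)|\ge|\C|\cdot\delta_{t+1}(\Palt)|\C|^{k-t-1}.
\]
Dividing by $|\C|^{k-t}$ gives $\delta_t(\Palt)\ge\delta_{t+1}(\Palt)$.

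\emph{Inequality for $\pi^{\ext}$.} The suprema in \eqref{eq:pi-ext} for $t$ and $t+1$ range over the same family of palettes not admitted by $F$. Termwise domination of $\delta_{t+1}$ by $\delta_t$ therefore gives $\pi_t^{\ext}(F)\ge\pi_{t+1}^{\ext}(F)$. When that family is empty, both values are $0$ by convention.

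\emph{Layered classes.} The cleanest route combines Theorem~\ref{thm:main-intro} with the chain just proved. If $F$ is $t$-layered, then $\pi_t^{\ext}(F)=0$. By monotonicity, $0\le\pi_{t+1}^{\ext}(F)\le\pi_t^{\ext}(F)=0$. Theorem~\ref{thm:main-intro} applied at $t+1$ then shows that $F$ is $(t+1)$-layered.

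A direct combinatorial check would also be possible. It would keep the same order, the same $\varphi$ and the same vector order, and move one head coordinate of each vector into its tail set. Condition~\ref{D1} is preserved. For \ref{D2}, the remaining heads must avoid the new tail entry, and this already holds by \ref{D1}. For this move one needs $k-t\ge2$, which holds because $t\le k-2$. The indirect argument via Theorem~\ref{thm:main-intro} avoids any such verification.

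The only point needing care is the bookkeeping in the first step. The identification of $N_{I'}^{\Palt}$ must be made with a coordinate-indexed assignment on $I'$, not with an ordered list. Nothing in the proof is a genuine obstacle.
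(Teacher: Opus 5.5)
Your proposal is correct. The first two parts are the paper's argument: the paper fixes arbitrary $I$ and $\boldsymbol c$, groups the completions by the color in a coordinate $j\notin I$, and then minimizes, which is the same as starting from a minimizer. The $\pi^{\ext}$ inequality likewise follows because both suprema range over the same palettes.

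For the last assertion, the paper uses exactly the direct argument you list as the alternative. It keeps the order, $\varphi$ and vector order, and moves one head coordinate of each vector into its tail set. Your check of \ref{D1} and \ref{D2} for this move is right: the only new requirement is that the remaining heads avoid the moved entry, and \ref{D1} already guarantees that.

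Your preferred route through Theorem~\ref{thm:main-intro} is also valid and not circular, since that theorem's proof never uses this proposition. It relies on the easy direction (Lemma~\ref{lem:greedy}) at level $t$ and the hard direction at level $t+1$. That hard direction is the universal palette $\Palt_{k,t+1,M}$ together with Lemma~\ref{lem:forcing}.

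The indirect route is therefore much heavier than the three-line direct check, and it tells you less. It only yields some witness of $(t+1)$-layeredness, whereas the direct argument shows that the same order, labelling and vector order already work. So the claim that the indirect route "avoids any verification" buys nothing here. The direct argument is the one to present; the indirect one is best seen as a consistency check between monotonicity of $\pi^{\ext}$ and the main theorem.
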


\begin{proof}
Fix $0\le t\le k-2$, a set $I\in\binom{[k]}t$, and $\boldsymbol c\in\C^I$. Choose $j\in[k]\setminus I$. For $a\in\C$, let $\boldsymbol c_a\in\C^{I\cup\{j\}}$ extend $\boldsymbol c$ by $\boldsymbol c_a(j)=a$. Grouping the completions according to the color in coordinate $j$ gives
\[
\frac{|N_I^{\Palt}(\boldsymbol c)|}{|\C|^{k-t}}
=
\frac1{|\C|}\sum_{a\in\C}
\frac{|N_{I\cup\{j\}}^{\Palt}(\boldsymbol c_a)|}{|\C|^{k-t-1}}
\ge\delta_{t+1}(\Palt).
\]
Taking the minimum over $I$ and $\boldsymbol c$ yields $\delta_t(\Palt)\ge\delta_{t+1}(\Palt)$. The inequality for $\pi_t^{\ext}$ follows because both suprema are taken over the same palettes not admitted by $F$.

Finally, suppose that a vector order satisfies \ref{D1} and \ref{D2} for some $t\le k-2$. In each vector, move any one head coordinate into its tail set. The remaining head values are still pairwise distinct, are different from the new tail set, and have not appeared in an earlier vector. Thus the same vector order satisfies \ref{D1} and \ref{D2} with $t+1$ in place of $t$, proving the final assertion.
\end{proof}

We finish with an application to cherry Tur\'an density. Let $H=(V,E)$ be a $3$-graph and let $A,B\subseteq V^2$ be sets of ordered pairs. Define
\[
K_{\mathrm{ch}}(A,B)=\{(x,y,z)\in V^3:(x,y)\in A,\ (x,z)\in B\}
\]
and
\[
e_H(A,B)=\bigl|\{(x,y,z)\in K_{\mathrm{ch}}(A,B):\{x,y,z\}\in E\}\bigr|.
\]
The graph $H$ is \emph{$(d,\mu,\mathrm{ch})$-dense} if
\[
e_H(A,B)\ge d|K_{\mathrm{ch}}(A,B)|-\mu|V|^3
\]
for all $A,B\subseteq V^2$. For a $3$-graph $F$, define
\begin{align*}
\pi_{\mathrm{ch}}(F)=\sup\bigl\{d\in[0,1]:{}&\text{for every }\mu>0\text{ and }n_0\in\N,\text{ there is an $F$-free}\notag\\[-1mm]
&\text{$(d,\mu,\mathrm{ch})$-dense $3$-graph $H$ with }|V(H)|\ge n_0\bigr\}.
\end{align*}
For $3$-palettes, $\delta_2$ is Lamaison's minimum palette codegree. His random palette construction~\cite{Lamaison2024} therefore gives
\begin{equation}\label{eq:cherry-palette-lower}
\pi_{\mathrm{ch}}(F)\ge\sup\{\delta_2(\Palt):F\text{ does not admit }\Palt\}=\pi_2^{\ext}(F).
\end{equation}

\begin{corollary}\label{cor:cherry}
If a $3$-graph $F$ satisfies $\pi_{\mathrm{ch}}(F)=0$, then $F$ is max-layered. More precisely, if $F$ is not max-layered, $M=|\partial F|$, and $q=2^{M+1}-1$, then
\[
\pi_{\mathrm{ch}}(F)\ge(3q)^{-q}>0.
\]
\end{corollary}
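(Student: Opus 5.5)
The corollary follows by chaining results already proved, specialised to $k=3$ and $t=2$. First I will invoke Proposition~\ref{prop:endpoints}(iii): a $3$-graph is max-layered if and only if it is $2$-layered. So if $F$ is not max-layered, it is not $2$-layered.

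Next I will apply Theorem~\ref{thm:main-intro} with $k=3$, $t=2$ and $M=|\partial F|$. It gives
\[
\pi_2^{\ext}(F)\ge (b_{3,2}q_{2,M})^{-(3-2)q_{2,M}}.
\]
The constants simplify as follows:
\[
b_{3,2}=(3-2)\binom32=3,\qquad q_{2,M}=1+2+\cdots+2^M=2^{M+1}-1=q.
\]
The bound therefore reads $\pi_2^{\ext}(F)\ge(3q)^{-q}$. Here $M\ge1$, because an edgeless graph is $2$-layered, so the quantity is a well-defined positive number.

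Then I will combine this with the palette lower bound \eqref{eq:cherry-palette-lower}, $\pi_{\mathrm{ch}}(F)\ge\pi_2^{\ext}(F)$, to get $\pi_{\mathrm{ch}}(F)\ge(3q)^{-q}>0$. The first assertion is the contrapositive: if $\pi_{\mathrm{ch}}(F)=0$ and $F$ were not max-layered, the bound just proved would give $\pi_{\mathrm{ch}}(F)>0$, a contradiction.

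The only point needing care is \eqref{eq:cherry-palette-lower} itself. It requires that Lamaison's random palette construction yields $(d,\mu,\mathrm{ch})$-dense $F$-free hypergraphs with $d=\delta_2(\Palt)$ for every palette not admitted by $F$. The paper records this as already established, attributing it to Lamaison's construction, so I will cite it rather than re-prove it. Everything else is an arithmetic check of the constants.
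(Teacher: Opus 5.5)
Your proposal is correct and follows the paper's proof exactly. You pass from ``not max-layered'' to ``not $2$-layered'' via Proposition~\ref{prop:endpoints}(iii), apply the explicit bound of Theorem~\ref{thm:main-intro} with $k=3$, $t=2$ (so $b_{3,2}=3$, $q_{2,M}=2^{M+1}-1$ and exponent $(k-t)q=q$), and transfer the bound to $\pi_{\mathrm{ch}}$ via \eqref{eq:cherry-palette-lower}.
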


\begin{proof}
Suppose that $F$ is not max-layered. 
By Proposition \ref{prop:endpoints}(iii), it is not $2$-layered. 
Apply the explicit lower bound in Theorem \ref{thm:main-intro} with $k=3$ and $t=2$. Here
\[
b_{3,2}=(3-2)\binom32=3,
\quad
q_{2,M}=1+2+\cdots+2^M=2^{M+1}-1=q,
\]
and $(k-t)q=q$. Hence $\pi_2^{\ext}(F)\ge(3q)^{-q}$. 
Equation \eqref{eq:cherry-palette-lower} gives the same lower bound for $\pi_{\mathrm{ch}}(F)$. Therefore, if $\pi_{\mathrm{ch}}(F)=0$, then $F$ must be max-layered.
\end{proof}

Positive coordinate-extension degree characterizes the classes of $t$-layered hypergraphs. Ordinary palette density, together with the realization theorem, yields the exact values constructed here. 

Apart from trying to determine further elements of $\Piuk$, one may ask which values are its accumulation points. It is not known whether $2/k^k$ is the first accumulation point above $k^{-k}$ for single forbidden $k$-graphs, 
or whether exact densities can converge to $k^{-k}$ from above. 

\section*{Acknowledgments}
\noindent This research was supported by the National Key R\&D Program of China under grant number 2024YFA1013900, the NSFC under grant number 12471327, and the China Postdoctoral Science Foundation under grant number 2026M793375.

\section*{Declaration on the use of AI}
At the beginning of this project, the author(s) had only constructed Theorem~\ref{thm:matching} for the case $k=3$. 
The author(s) then asked AI tools to work on the problem directly, to see whether the result could be generalized to $k \geq 3$. 
The initial attempts mainly consisted of trying to optimize methods from previous work and did not lead to the result. 
After the author(s) provided the construction for how the case $k=3$ should be done, and with this strategy in place, the AI tools quickly produced an argument yielding the theorem. 
In addition, the authors used AI to assist in discussing proof strategies, checking proofs, and improving exposition. After using this tool, the authors reviewed and edited the content as needed, independently verified all mathematical arguments, and take full responsibility for the content of the article.

\end{document}